\newcommand{\nc}{\newcommand}
\nc{\dmo}{\DeclareMathOperator}
\nc{\nt}{\newtheorem}
\nc{\p}[1]{\bigskip\noindent\emph{#1.}}
\nc{\M}{{\mathcal{M}_g^1}}
\nc{\X}{\mathcal{X}}
\nc{\C}{\mathcal{C}}
\nc{\T}{\mathcal{T}}
\nc{\W}{{\mathcal{W}_g^1}}
\nc{\J}{{\mathcal{K}_g^1}}
\nc{\Ch}{{\text{Ch}{_g^1}}}
\nc{\ch}{e}
\nc{\ii}{{\hat{i}}}
\nc{\I}{{\mathcal{I}_g^1}}
\nc{\cut}{\!\ssearrow\!}
\dmo{\Mod}{Mod}
\dmo{\SMod}{SMod}
\dmo{\SL}{SL}
\dmo{\PSp}{PSp}
\dmo{\PSL}{PSL}
\dmo{\Homeo}{Homeo}
\dmo{\im}{\mathrm{im}}
\dmo{\Aut}{Aut}
\dmo{\Sp}{\mathrm{Sp}({2g}, \Z)}
\nc{\g}{{g^{\alpha}}}
\nc{\Z}{\mathbb Z}
\nc{\N}{\mathcal N}
\nc{\R}{\mathbb R}
\nc{\F}{\mathcal F}
\nc{\ga}{\gamma}
\nc{\de}{\delta}
\nc{\ep}{\epsilon}
\nc{\flm}{\lambda_{2}}
\nc{\normalclosure}[1]{\ensuremath{\left \langle \left \langle #1 \right \rangle \right \rangle}}
\title{The normal closure of a homological genus 0 bounding pair map}
\author{Lei Chen}
\author{Weiyan Chen}
\address{Lei Chen \newline Morningside Center of Mathematics, Chinese Academy of Sciences \newline  Adademy of Mathemacis and Systems Science, Chinese Academy of Sciences\newline  Beijing, 100190, China \\  chenlei@amss.ac.cn }
\address{Weiyan Chen \newline Yau Mathematical Sciences Center, Tsinghua University\newline Beijing, 100084, China \\  chwy@tsinghua.edu.cn}
\begin{document}

\begin{abstract}
Justin Lanier and the authors recently determined the group normally generated by a single bounding pair map of genus $n$. We related this subgroup with the Chillingworth subgroup and the Casson--Morita's $d$ map. In this paper, we extend the results to the case when $n=0$. Let $\M$ be the mapping class group, $\Ch$ be the Chillingworth subgroup and $d$ be the Casson--Morita's $d$-map. We show that $Ker(d)=[\Ch,\M]$ and it is generated by a single homological genus 0 bounding pair map. We also construct an element $H_0\in \Ch$, and show that $\Ch$ is normally generated by this single element $H_0$.
\end{abstract}

\maketitle

\vspace*{-4ex}

\vspace*{0in}
\vspace{.15in}

\vspace{-.15in}

\section{Introduction}
Johnson \cite{JohnsonLantern} proved that the Torelli group is normally generated by a bounding pair map of genus 1. In the authors' previous paper joint with Lanier \cite{Normal}, we 
studied the subgroup normally generated by a bounding pair map  of genus $n$ in the mapping class group $\M$ of a genus $g$ surface with 1 bondary component. Let $\ch:\I\to H_1(S_g^1;\Z)$ denote the Chillingworth homomorphism. 
Let $\Ch[2n]$ denote the subgroup of $\I$ consisting of $f$ such that $\ch(f)=0\pmod{2n}.$ 
        Let $d_{4n}: \Ch [2n]\to\Z/4n\Z$ denote the Casson--Morita's $d$ map modulo $4n$. We proved:
\begin{theorem}[Theorem 1.1 and 1.2 in \cite{Normal}]
\label{thm:old}
        When $1\le n\le g-2$, the normal subgroup  of $\M$ generated by  a genus $n$ bounding pair map $BP_n$ is
        \begin{equation}
        \label{eq:Wn}
             \langle\langle BP_n\rangle\rangle
            =[\Ch [2n],\M ]=\ker(d_{4n}).
        \end{equation}
\end{theorem}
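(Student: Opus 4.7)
The plan is to prove \eqref{eq:Wn} through the chain of inclusions
\[
\langle\langle BP_n\rangle\rangle \;\subseteq\; \ker(d_{4n}) \;\subseteq\; [\Ch[2n],\M] \;\subseteq\; \langle\langle BP_n\rangle\rangle.
\]
For the first inclusion, I would verify that $\ker(d_{4n})$ is a normal subgroup of $\M$: the Chillingworth homomorphism $\ch$ is $\M$-equivariant, so $\Ch[2n] = \ch^{-1}(2n H_1)$ is normal in $\M$, and the conjugation formula for the Casson--Morita $d$-invariant shows that its reduction modulo $4n$ is $\M$-invariant on $\Ch[2n]$ (this is essentially why the normalization ``$4n$'' appears). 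A direct evaluation using Morita's explicit formula for $d$ on a bounding pair map then yields $d_{4n}(BP_n) = 0$, so $BP_n \in \ker(d_{4n})$, and the first inclusion follows by normality.

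For the third inclusion $[\Ch[2n],\M]\subseteq\langle\langle BP_n\rangle\rangle$, I would first show that the Johnson kernel $\J$ lies in $\langle\langle BP_n\rangle\rangle$: by Johnson's theorem $\J$ is normally generated in $\M$ by a single genus-$1$ bounding pair map, and a short separate argument expresses such a genus-$1$ bounding pair as a product involving conjugates of $BP_n$ via Dehn twist relations. With $\J\subseteq\langle\langle BP_n\rangle\rangle$ in hand, one chooses a generating set for $\Ch[2n]$ consisting of conjugates of $BP_n$ together with representatives for the finitely many coset classes in $\Ch[2n]/(\langle\langle BP_n\rangle\rangle\cdot\J)$, and applies the commutator identity $[fg,h] = f[g,h]f^{-1}\cdot[f,h]$ to reduce $[\Ch[2n],\M]$ to commutators of these generators with $\M$. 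The nontrivial coset representatives can then be handled using lantern-type relations that realize $[f,g]$ itself as a product of conjugates of $BP_n^{\pm 1}$, even when $f\notin\langle\langle BP_n\rangle\rangle$.

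The middle inclusion $\ker(d_{4n})\subseteq[\Ch[2n],\M]$ is the deepest step. Here $d_{4n}$ descends to a surjective homomorphism $\overline{d_{4n}}:\Ch[2n]/[\Ch[2n],\M]\twoheadrightarrow\Z/4n\Z$, and the goal is to show $\overline{d_{4n}}$ is injective. I would exhibit an explicit element $\xi\in\Ch[2n]$ with $d_{4n}(\xi)$ equal to a generator of $\Z/4n\Z$ and prove that every $f\in\Ch[2n]$ is congruent to some power $\xi^k$ modulo $[\Ch[2n],\M]$ with $k\equiv d_{4n}(f)\pmod{4n}$. The main obstacle is this precise coinvariants computation: it requires combining the Johnson homomorphism (to reduce $f$ modulo $\J$ and control the image in $\bigwedge^3 H_1$) with Morita's description of $H_1(\J)$ as an $\M$-module (to handle the remaining contribution from $\J$), and then evaluating $d_{4n}$ on representatives of each resulting coinvariant class. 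It is essentially this module-theoretic step, rather than any single topological construction, that carries the bulk of the technical content.
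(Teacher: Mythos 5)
First, a point of comparison: this paper does not prove Theorem \ref{thm:old} at all --- it is quoted from the earlier paper \cite{Normal}, as the theorem header indicates --- so there is no in-paper proof to measure your argument against. Your three-inclusion scheme is the natural one, and the reduction of the middle inclusion to the coinvariants computation $\Ch[2n]/[\Ch[2n],\M]\cong\Z/4n\Z$ matches the architecture one can reconstruct from the fragments of \cite{Normal} cited here (\cite[Theorem 1.4]{Normal} is essentially that coinvariants statement). But that step is where nearly all of the work lives, and your proposal explicitly defers it, so as written this is an outline rather than a proof.

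More seriously, your third inclusion rests on a false claim. You assert that ``by Johnson's theorem $\J$ is normally generated in $\M$ by a single genus-$1$ bounding pair map'' and aim to conclude $\J\subseteq\langle\langle BP_n\rangle\rangle$. Johnson's theorem concerns the Torelli group $\I$, not the Johnson kernel $\J=\ker\tau$; a genus-$1$ bounding pair map has nontrivial Johnson homomorphism and so does not even lie in $\J$, which is instead generated by separating twists. Moreover the conclusion itself fails for $n\ge 3$: a genus-$2$ separating twist $T_2$ lies in $\J\subseteq\Ch[2n]$ but has $d(T_2)=8$, so $d_{4n}(T_2)=8\not\equiv 0\pmod{4n}$ and hence $T_2\notin\ker(d_{4n})=\langle\langle BP_n\rangle\rangle$. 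The statement you actually need (and the one invoked throughout the present paper) is the weaker inclusion $[\J,\M]\subseteq\langle\langle BP_n\rangle\rangle$; your argument for $[\Ch[2n],\M]\subseteq\langle\langle BP_n\rangle\rangle$ would have to be restructured around that, since you cannot first absorb all of $\J$ into the normal closure.
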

In this paper, we consider an analog of Theorem \ref{thm:old} in the case when $n=0$. The equality (\ref{eq:Wn}) obviously fails when $n=0$ because any genus 0 bounding pair map is trivial. Instead, we consider a \emph{homological genus 0 bounding pair map} $B_0:=T_aT_b^{-1}$ introduced recently by Kosuge \cite[Theorem 22]{Chilling}. See Figure \ref{fig:B0} below.
\begin{figure}[h]
        \centering
                \vspace{-20pt}
\includegraphics[width=0.5\linewidth]{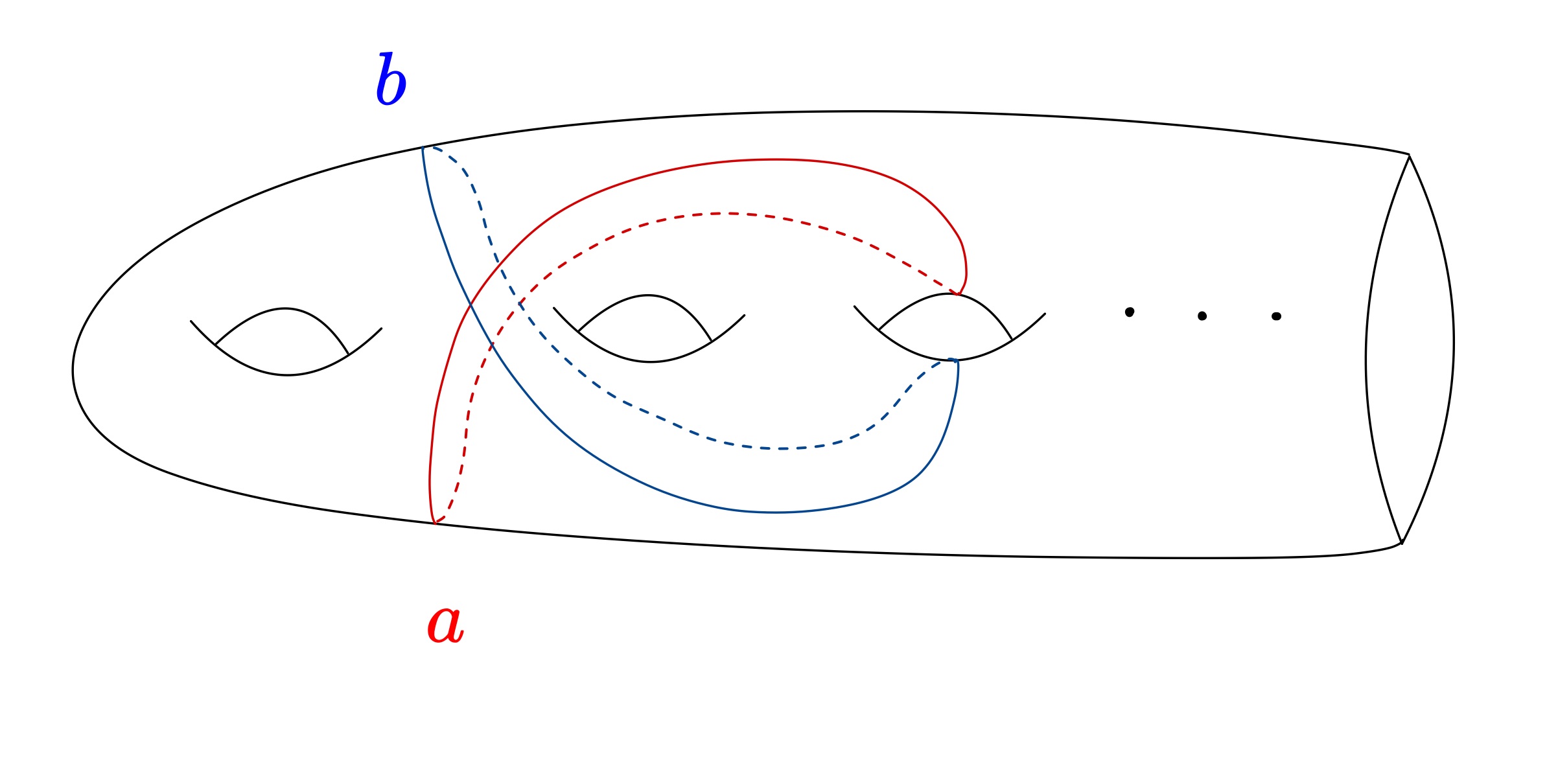}
        \vspace{-20pt}
        \caption{A homological genus 0 bounding pair map $B_0=T_aT_b^{-1}$.} 
                \vspace{-5pt}
        \label{fig:B0}
    \end{figure}

\noindent Intuitively, $B_0$ has homological genus 0 because it is the difference (and not the sum) of two genus 1 bounding pair maps. In Section 2, we will formally define the homological genus of a fake bounding pair map $T_aT_b^{-1}$ where $a$ and $b$ are homologous curves that can possibly intersect. Denote the Chillingworth subgroup by $\Ch:=\ker (\ch:\I\to H_1(S_g^1;\Z))$. Let $d:\Ch\to\Z$ denote the Casson-Morita's $d$ map restricted to $\Ch$. Kosuge introduced $B_0$ in order to show that $\ker(d)$ is normally generated by $B_0$ together with $[\J,\M]$ and a genus 1 separating twist \cite[Theorem B]{Chilling}. We will show that Kosuge's normal generating set only needs one element. In fact, we prove more:
\begin{theorem}
\label{main1}
 When $g\ge 6$, the normal subgroup of $\M$ generated by $B_0$  is
 \begin{equation}
         \label{eq:B0=}
            \langle\langle B_0\rangle\rangle=[\Ch ,\M ]=\ker(d). 
 \end{equation}
\end{theorem}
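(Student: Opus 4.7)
\emph{Plan.} The strategy is to prove $\langle\langle B_0\rangle\rangle = [\Ch,\M] = \ker(d)$ by a cycle of three inclusions. The easiest, $[\Ch,\M] \subseteq \ker(d)$, is formal: the Casson--Morita homomorphism $d\colon\Ch\to\Z$ is a homomorphism to the abelian group $\Z$ that is $\M$-invariant under conjugation, so $d([c,m]) = d(c)-d(mcm^{-1}) = 0$ for every $c\in\Ch$, $m\in\M$.

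For the inclusion $\langle\langle B_0\rangle\rangle \subseteq [\Ch,\M]$: since $[\Ch,\M]$ is normal in $\M$, it suffices to exhibit $B_0 \in [\Ch,\M]$. The plan is to rewrite $B_0 = T_aT_b^{-1}$ as a product $BP \cdot (BP')^{-1}$ of two genus 1 bounding pair maps sharing the same Chillingworth class; choosing $m \in \M$ with $m\, BP\, m^{-1} = BP'$ realizes this product as a commutator $[BP, m^{-1}]$. Although $BP \notin \Ch$ individually, the full product $BP \cdot (BP')^{-1}$ does lie in $\Ch$, and with careful bookkeeping this expression can be reassembled into an honest product of commutators in $[\Ch, \M]$.

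The main step is $\ker(d) \subseteq \langle\langle B_0\rangle\rangle$. Here I would invoke Kosuge's Theorem B in \cite{Chilling}, which asserts that $\ker(d)$ is normally generated in $\M$ by three items: $B_0$ itself, a single genus 1 separating twist $T_c$, and the subgroup $[\J, \M]$. The task then reduces to \textbf{(i)} showing $T_c \in \langle\langle B_0\rangle\rangle$ for some genus 1 separating curve $c$, and \textbf{(ii)} showing $[\J, \M] \subseteq \langle\langle B_0\rangle\rangle$. For (i), I would construct $T_c$ as a product of $\M$-conjugates of $B_0^{\pm 1}$ via an explicit lantern-type or chain relation among Dehn twists supported on a suitably chosen configuration of homologous curves in $S_g^1$; the hypothesis $g \ge 6$ is used to guarantee enough disjoint subsurfaces for the construction. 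For (ii), every $j \in \J$ is a product of $\M$-conjugates of separating twists, so $[\J, \M]$ is generated by ``twist differences'' $[T_{c'}, m] = T_{c'} T_{m(c')}^{-1}$ for separating curves $c'$; a chain-of-separating-curves argument reducing higher genus pieces to many genus 1 pieces, combined with (i), puts each such difference into $\langle\langle B_0\rangle\rangle$.

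The chief obstacle is step (i): explicitly realizing a genus 1 separating twist as a word in $\M$-conjugates of $B_0^{\pm 1}$. This demands the right topological setup---a configuration of homologous curves whose associated fake-bounding-pair differences telescope onto a separating twist---and is where the $g \ge 6$ hypothesis truly enters. Once (i) is in hand, step (ii) should follow from mapping class group manipulations parallel to those used in \cite{Normal} for the $n \ge 1$ case.
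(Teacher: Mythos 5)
Your outer logic (a cycle of three inclusions) is sound, but the two substantive steps are gaps, and the central one is fatal. Your step (i) --- explicitly writing a genus $1$ separating twist as a product of $\M$-conjugates of $B_0^{\pm 1}$ via a lantern or chain relation --- is precisely what the paper states it \emph{cannot} do and records as an open problem (Question \ref{}, ``What is an explicit way to write a genus 1 separating twist $T_1$ as a product of conjugates of $B_0$?''). The paper's route around this is entirely indirect: it first shows $T_1^2\in[\Ch,\M]$ by a chain of three lantern relations (Lemma \ref{imd}), then computes $H^1(\Ch;\Z/m\Z)^{\M}\cong\Z/m\Z$ for $2m\le g-2$ by comparing the five-term exact sequences for $\Ch$ and the congruence subgroup $\Ch[4m]$ and importing results from \cite{Normal} (Theorem \ref{thm:Abmodm}), and finally rules out $T_1\notin[\Ch,\M]$ because otherwise $\Ch/[\Ch,\M]\cong\Z\oplus\Z/2$ would force $H^1(\Ch;\Z/2)^{\M}\cong\Z/2\oplus\Z/2$, contradicting the computation at $m=2$ (this is where $g\ge 6$ enters). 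With $T_1\in[\Ch,\M]$ established, Kosuge's generators give $\ker(d)=[\Ch,\M]$, not directly $\ker(d)\subseteq\langle\langle B_0\rangle\rangle$; the remaining inclusion $[\Ch,\M]\le\langle\langle B_0\rangle\rangle$ is then proved by a separate, substantial argument --- the connectivity and $\W(0)$-transitivity of the curve complex $C_0^\alpha$ (Theorem \ref{C_0}, all of Section 4) --- which also absorbs your step (ii) for free since $[\J,\M]\le[\Ch,\M]$. Unless you can actually produce the explicit word for $T_1$ that the authors could not, your plan for $\ker(d)\subseteq\langle\langle B_0\rangle\rangle$ does not go through.

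The inclusion $\langle\langle B_0\rangle\rangle\subseteq[\Ch,\M]$ is also not established by your sketch. Writing $B_0=BP\cdot(BP')^{-1}=[BP,m^{-1}]$ with $BP$ a genus $1$ bounding pair map only exhibits $B_0$ as a commutator of an element of $\I$ (not of $\Ch$, since $\ch(BP)\ne 0$) with an element of $\M$; the ``careful bookkeeping'' needed to land in $[\Ch,\M]$ is exactly the missing content. The paper instead builds a specific element $H_0=T_{a_1}T_{a_3}T_{a_2}^{-2}$, verifies $\ch(H_0)=2[a_1]-2[a_2]=0$ so that $H_0\in\Ch$, and shows that $[T_{d_1},H_0]=T_{d_1}T_{H_0(d_1)}^{-1}$ is conjugate to $B_0$; recognizing this last mapping class as $B_0$ requires the classification of homological genus $0$ fake bounding pairs with $i(b,c)=2$ (Lemma \ref{lem: i=2}), another nontrivial topological ingredient absent from your outline. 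Only the inclusion $[\Ch,\M]\subseteq\ker(d)$ in your proposal is complete as written.
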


As a corollary, Theorem \ref{main1} implies that a genus 1 separating twist, which we denote by $T_1$, is a product of conjugates of $B_0$. However, our proof is indirect. We ask:
\begin{question} What is an explicit way to write a genus 1 separating twist $T_1$ as a product of conjugates of $B_0$?
\end{question}
We remark that any answer to this question can only work for genus 1 separating twists because any separating twist of  genus $n>1$ does not belong to $\langle\langle B_0\rangle\rangle$.

Our next theorem will show that $\Ch$ is normally generated by a single mapping class $H_0:=T_{a_1}T_{a_3}T_{a_2}^{-2}$ where $a_1,a_2,a_3$ are as in Figure \ref{fig:a1a2a3} below:
\begin{figure}[h]
\centering\includegraphics[width=0.6\linewidth]{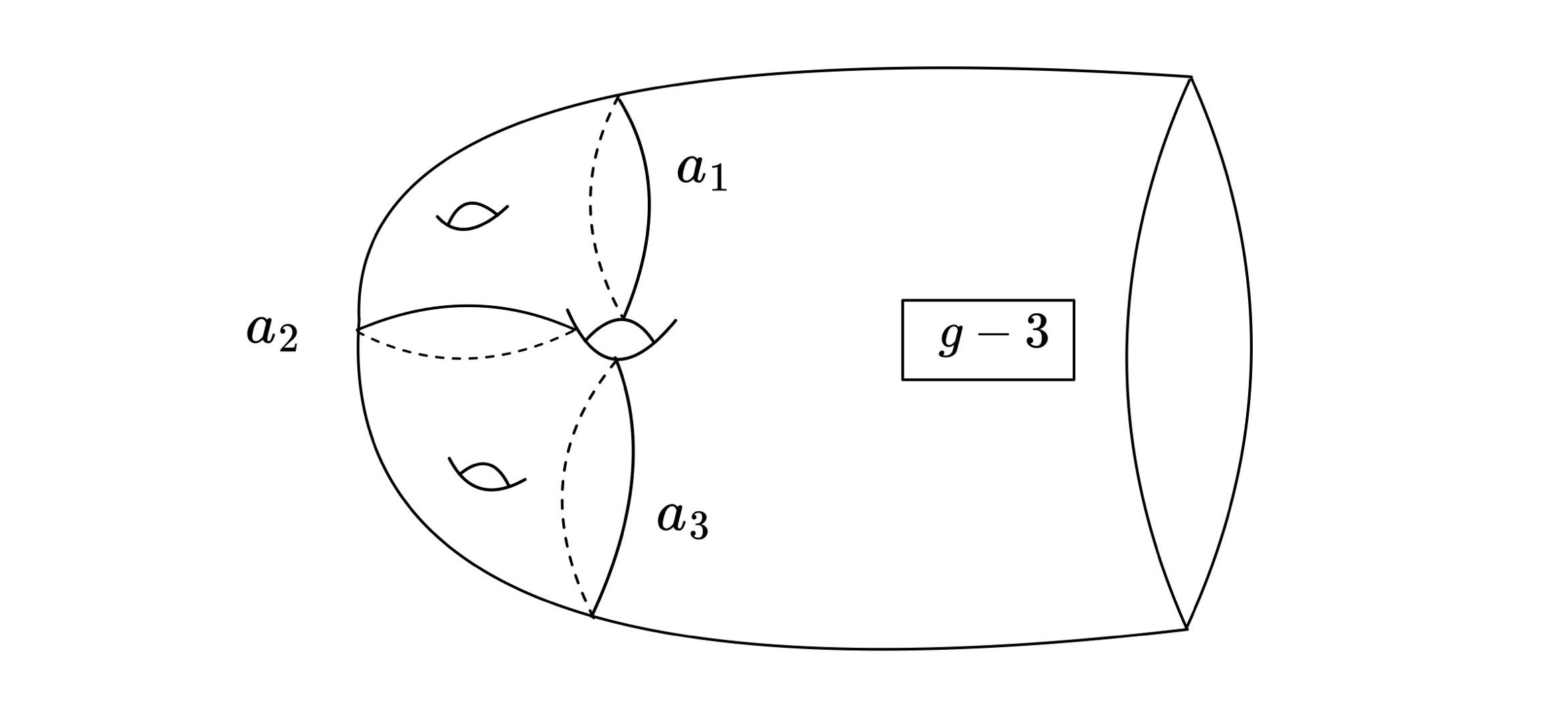}
    \caption{The number $g-3$ denotes the genus of the subsurface.}
    \label{fig:a1a2a3}
\end{figure}
\begin{theorem}\label{main2}
For $g\ge 6$, we have that
\[\langle \langle H_0\rangle \rangle = \Ch\]
\end{theorem}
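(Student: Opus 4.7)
The plan is to combine Theorem \ref{main1} with an analysis of the image of $H_0$ in the abelian quotient $\Ch/[\Ch,\M]$. First I verify that $H_0\in\Ch$. Since $[a_1]=[a_2]=[a_3]$ in $H_1(S_g^1;\Z)$, the decomposition
\[
H_0=(T_{a_1}T_{a_2}^{-1})(T_{a_3}T_{a_2}^{-1})
\]
exhibits $H_0$ as a product of two bounding pair maps, so $H_0\in\I$. A direct computation of the Chillingworth class, via winding numbers or the contraction formula on the Johnson image, shows that the two bounding pair contributions cancel, giving $\ch(H_0)=0$.

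Next I show that the image of $H_0$ generates $\Ch/[\Ch,\M]$. By Theorem \ref{main1}, $[\Ch,\M]=\ker(d)$, so the Casson--Morita $d$-map descends to an embedding $\Ch/[\Ch,\M]\hookrightarrow\Z$. Applying Morita's explicit formula for $d$ to $H_0$ along the disjoint homologous curves $a_1,a_2,a_3$ with coefficient vector $(+1,+1,-2)$, one computes $d(H_0)$ and checks that it generates the image $d(\Ch)$. Consequently
\[
\Ch=\langle\langle H_0\rangle\rangle\cdot[\Ch,\M].
\]

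It remains to show $[\Ch,\M]\subseteq\langle\langle H_0\rangle\rangle$, which by Theorem \ref{main1} reduces to showing $B_0\in\langle\langle H_0\rangle\rangle$. Using the $S_3$-symmetry of the triple $\{a_1,a_2,a_3\}$, pick $f\in\M$ realizing a cyclic permutation. Since the $T_{a_i}$'s pairwise commute,
\[
H_0\cdot\bigl(fH_0f^{-1}\bigr)^{-1}=(T_{a_3}T_{a_2}^{-1})^3\in\langle\langle H_0\rangle\rangle.
\]
The factor $T_{a_3}T_{a_2}^{-1}$ is a genus-$k$ bounding pair map, and moreover lies in $\Ch$ since its cube does and $H_1$ is torsion-free. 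The argument is completed by combining this cube with further conjugates of $H_0$ to produce a homological genus $0$ bounding pair map inside $\langle\langle H_0\rangle\rangle$, which by the change-of-coordinates principle is conjugate to $B_0$. The main obstacle is precisely this last bridge: while the commutator identity cheaply delivers the cube of a bounding pair map, extracting $B_0$ itself from that cube requires delicate Dehn twist manipulations, likely paralleling the commutator-based techniques of \cite{Normal} and Kosuge \cite{Chilling}.
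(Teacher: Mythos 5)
Your overall skeleton matches the paper's: reduce to showing (i) that $d(H_0)$ generates $\im(d)=8\Z$ and (ii) that $B_0\in\langle\langle H_0\rangle\rangle$, then invoke Theorem \ref{main1}. But both of your key steps have genuine gaps. For (i), there is no ``direct'' evaluation of $d(H_0)$ from the coefficient vector $(+1,+1,-2)$: writing $H_0=(T_{a_1}T_{a_2}^{-1})(T_{a_2}T_{a_3}^{-1})^{-1}$ and using Morita's cocycle identity $d(\phi\psi)=d(\phi)+d(\psi)+\ii(\ch(\phi),\ch(\psi))$ gives $d(H_0)=d(T_{a_1}T_{a_2}^{-1})-d(T_{a_2}T_{a_3}^{-1})$, the difference of the $d$-values of two \emph{conjugate} genus $1$ bounding pair maps. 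Since $d$ is only $\M$-invariant on $\Ch$ and not on all of $\I$ (indeed, the fact that $d(H_0)=8\neq 0$ shows these two conjugate elements have different $d$-values, so $d$ is provably not a class function on $\I$), this difference cannot be read off from conjugacy classes; the naive symmetry argument would give $0$, which is wrong. The paper earns the value $8$ through the lantern-relation computations of Lemma \ref{imd}, which produce the congruence $H_0\equiv T_1^{-2}T_2$ in $\Ch/[\Ch,\M]$ and hence $d(H_0)=d(T_2)=8$. As written, your step is not a computation.

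For (ii), the claimed $S_3$-symmetry does not exist. Every $f\in\M$ fixes $\partial S_g^1$, hence preserves the complementary component of $a_1\cup a_2\cup a_3$ containing the boundary; that component is adjacent only to $a_1$ and $a_3$, so $f$ can at most swap $a_1\leftrightarrow a_3$ (which fixes $H_0$, since the twists commute) and can never cyclically permute the triple. Your own conclusion signals the contradiction: if $(T_{a_3}T_{a_2}^{-1})^3$ lay in $\langle\langle H_0\rangle\rangle\leq\Ch$, then $\ch\big((T_{a_3}T_{a_2}^{-1})^3\big)=6[a_3]$ would vanish, which is false since $H$ is torsion-free; likewise a genus $1$ bounding pair map satisfies $\ch(T_{a_3}T_{a_2}^{-1})=2[a_3]\neq 0$ and so is never in $\Ch$, contrary to your assertion. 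Thus the ``bridge'' you defer to delicate Dehn twist manipulations starts from a false premise. The paper's actual route to (ii) is Proposition \ref{2.2}(2): for a curve $d_1$ meeting each of $a_2,a_3$ once, the commutator $[T_{d_1},H_0]=T_{d_1}T_{H_0(d_1)}^{-1}$ lies in $\langle\langle H_0\rangle\rangle$, and one checks $i(d_1,H_0(d_1))=2$ and $\g(d_1,H_0(d_1))=0$, so the classification Lemma \ref{lem: i=2} identifies this commutator as a conjugate of $B_0$.
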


Let $\W(0):=\langle\langle B_0\rangle\rangle$. We also prove the following homological results. 
\begin{theorem}
    \label{thm:homological results}
    For $g\ge 6$,  we have that
    \begin{enumerate}
        \item $H_1(\W(0);\Z)_{\M}=0,$\ \  \ \ \ \ \ \ \ \  $H^1(\W(0);\Z)^{\M}=0.$
        \item     $H_1(\Ch;\Z)_{\M}\cong \Z, \ \ \text{ }\ \ \ \ \ \ \ \ H^1(\Ch;\Z)^{\M}= \langle d/8\rangle\cong \Z.$\\
        $H^1(\Ch;\Z/m\Z)^{\M}= \langle d/8\rangle\cong \Z/m\Z$ for any $m\in\Z_{>0}$.
    \end{enumerate}
\end{theorem}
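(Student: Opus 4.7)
The plan is to reduce the theorem to the identification $\W(0) = [\Ch,\M] = \ker d$ of Theorem \ref{main1}, combined with the Lyndon--Hochschild--Serre five-term exact sequence and the classical fact that $H_1(\M;\Z)=0$ for $g \geq 3$ (Powell).

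Part (2) is immediate from Theorem \ref{main1}. Since $[\Ch,\Ch] \subseteq [\Ch,\M]=\W(0)$,
\[
H_1(\Ch;\Z)_{\M} \;=\; \Ch\big/\big([\Ch,\Ch]\cdot[\Ch,\M]\big) \;=\; \Ch/\W(0),
\]
which by $\W(0)=\ker d$ is isomorphic via $d$ to $d(\Ch)=8\Z\cong\Z$ (the divisibility by $8$ is Morita's, and surjectivity onto $8\Z$ is realized by a genus $1$ separating twist in $\Ch$). Dually, a $\M$-invariant homomorphism $\phi\colon\Ch\to A$ is exactly one killing $[\Ch,\M]$, hence factors through $\Ch/\W(0)\cong\Z$, giving $H^1(\Ch;A)^{\M}=\mathrm{Hom}(\Z,A)$ with generator $d/8$; setting $A=\Z$ or $A=\Z/m\Z$ yields the two stated formulas.

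For part (1), I would apply the five-term exact sequence to
\[1\to\W(0)\to\Ch\to\Ch/\W(0)\to 1.\]
Since $\Ch/\W(0)\cong\Z$ has $H_2(\Z;\Z)=0$, and since $\W(0)$ acts trivially on $H_1(\W(0);\Z)$, this produces a short exact sequence of $\M$-modules
\[0 \to H_1(\W(0);\Z)_{\Ch} \to H_1(\Ch;\Z) \to \Z \to 0,\]
in which $\Z=\Ch/\W(0)$ carries the trivial $\M$-action (it is the largest $\M$-trivial quotient of $\Ch$, by Theorem \ref{main1}). Taking the long exact sequence of group homology with respect to $\M$ and using $H_1(\M;\Z)=0$ for $g\ge 3$ produces
\[0 \to H_1(\W(0);\Z)_{\M} \to H_1(\Ch;\Z)_{\M} \to \Z \to 0,\]
where I have used that $(H_1(\W(0))_{\Ch})_{\M}=H_1(\W(0))_{\M}$ because $\Ch\subseteq\M$. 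Under the identification of part (2), the right-hand map is literally the identity of $\Ch/\W(0)\cong\Z$, forcing $H_1(\W(0);\Z)_{\M}=0$. The invariant cohomology then vanishes by universal coefficients:
\[H^1(\W(0);\Z)^{\M} \;=\; \mathrm{Hom}\!\big(H_1(\W(0);\Z)_{\M},\,\Z\big) \;=\; 0.\]

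The only subtle point is the naturality bookkeeping in part (1): one needs to check that the map $H_1(\Ch;\Z)_{\M}\to\Z$ coming out of the five-term sequence agrees, under the identification of part (2), with the natural projection $\Ch\to\Ch/\W(0)$, so that the composition is the identity rather than merely surjective. Once this routine naturality check is in hand, both parts of the theorem drop out of Theorem \ref{main1} by formal manipulation of group-homology exact sequences, with no further geometric input.
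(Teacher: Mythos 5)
Your proof is correct, and part (2) is essentially the paper's argument: the paper isolates it as a small lemma (an $\M$-invariant surjection $f\colon G\to A$ with $\ker f\subseteq [G,\M]$ forces $H_1(G)_{\M}\cong A$ and $H^1(G;B)^{\M}\cong \mathrm{Hom}(A,B)$), applied to $d/8\colon \Ch\to\Z$ via Theorem \ref{main1}. For part (1), however, you take a genuinely different route. The paper deduces $H_1(\W(0);\Z)_{\M}=0$ directly from Lemma \ref{2.3}, which establishes $\W(0)=[\W(0),\M]$ by an explicit geometric argument (producing a conjugate of $B_0$ of the form $[T_a, T_dT_c^{-1}]$ via the lantern configuration); the statement is then immediate since $H_1(\W(0))_{\M}=\W(0)/[\W(0),\M]$. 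You instead derive the same vanishing purely formally from Theorem \ref{main1}: the five-term sequence for $1\to\W(0)\to\Ch\to\Z\to1$ (using $H_2(\Z)=0$), followed by the homology long exact sequence over $\M$ and Powell's theorem $H_1(\M;\Z)=0$, with the identification $(H_1(\W(0))_{\Ch})_{\M}=H_1(\W(0))_{\M}$ and the naturality check that the map $H_1(\Ch)_{\M}\to\Z$ is the canonical isomorphism. This is valid and shows that part (1) needs no geometric input beyond Theorem \ref{main1}, at the price of importing $H_1(\M)=0$ and the bookkeeping you flag; the paper's route is shorter because Lemma \ref{2.3} is already proved and used elsewhere (e.g.\ in the transitivity argument of Section 4). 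One small slip: surjectivity of $d$ onto $8\Z$ is witnessed by a genus $2$ separating twist, since $d(T_n)=4n(n-1)$ gives $d(T_1)=0$ and $d(T_2)=8$; a genus $1$ separating twist lies in $\ker d$. This does not affect your argument, as $d(\Ch)=8\Z$ is cited from Morita and Kosuge in any case.
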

Kosuge \cite[Theorem C]{Chilling} computed $H_1(\Ch;\mathbb{Q})$ as $\M$-modules when $g\ge 6$. Our Theorem \ref{thm:homological results} provides the $\M$-invariant part over $\Z$ and $\Z/m\Z$.

\p{An important ingredient}
A key step in the proof of Theorem \ref{main1} is to prove that a curve complex $C_0^\alpha$, to be described below, is connected. In Section 2, we will define the \emph{homological genus} $g(a,b)$ of two homologous curves $a$ and $b$, not necessarily disjoint. For   a  nonseparating curve $\alpha$, we define a complex $C_0^\alpha$ to have:
\begin{itemize}
\item vertex: a curve $c$ such that $g(\alpha,c)=0$. 
\item edge: two vertices $c,d\in C_0^a$ are connected by an edge if and only if $T_cT_d^{-1}$ is conjugate to $B_0$ in $\M$.
\end{itemize}
In other words, there is an edge between $c,d\in  C_0^\alpha$ if and only if there is a mapping class $f$ such that $\{f(c),f(d)\}=\{a,b\}$ for $a,b$ in Figure \ref{fig:B0}. 
\begin{theorem}\label{C_0}
Suppose that $g\ge 6$.
\begin{enumerate}
    \item The curve complex $C_0^\alpha$ is path-connected.
    \item The groups $\Ch$ and $\W(0)$ act on  $C_0^\alpha$  and both actions are transitive.
\end{enumerate}
\end{theorem}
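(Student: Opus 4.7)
The plan is to establish (1) first and then derive (2) using the structure of $\W(0)$.

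For (1), I would proceed by induction on the geometric intersection number $i(\alpha, c)$ to show every vertex $c \in C_0^\alpha$ lies in the same component as $\alpha$. Since any two disjoint non-isotopic homologous nonseparating curves cobound a subsurface of positive genus, the condition $g(\alpha, c)=0$ forces either $c=\alpha$ (the base case) or $i(\alpha, c)\geq 1$. For the inductive step, given $c$ with $i(\alpha, c)>0$, the task is to produce a vertex $c' \in C_0^\alpha$ adjacent to $c$ in $C_0^\alpha$ with $i(\alpha, c')<i(\alpha, c)$. I would construct $c'$ by surgering $c$ along a suitable subarc of $\alpha$ cobounded by two intersection points of $c \cap \alpha$ of appropriate relative orientation; the hypothesis $g\geq 6$ provides enough room in the complementary subsurface both to verify the edge condition (that $T_c T_{c'}^{-1}$ is conjugate to $B_0$) and to ensure $g(\alpha,c')=0$.

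For (2), since $\W(0) \subseteq \Ch$, it suffices to establish transitivity for $\W(0)$. By (1), this reduces to showing that for each edge $(c,d)$ in $C_0^\alpha$ there is an element of $\W(0)$ sending $c$ to $d$. By definition of an edge there is $f \in \M$ with $\{f(a), f(b)\}=\{c,d\}$, where $(a,b)$ is the model pair of Figure \ref{fig:B0}. This model pair admits a topological involution $\sigma$ swapping $a$ and $b$; I would realize a lift $\tilde\sigma$ of this symmetry as an explicit element of $\W(0)$, written as a product of conjugates of $B_0$ supported in a subsurface containing $a\cup b$. Conjugating $\tilde\sigma$ by $f$ then produces the desired element of $\W(0)$ swapping $c$ and $d$.

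The main obstacle is the inductive step of (1): the surgered curve $c'$ must simultaneously satisfy two conditions, namely that $(c,c')$ is a $B_0$-edge and that $g(\alpha, c')=0$. These conditions concern different subsurfaces and interact delicately, so verifying both requires treating several combinatorial cases depending on how $c$ meets $\alpha$ near the surgery region; the bound $g\geq 6$ enters precisely here, to guarantee that the needed standard configurations embed in the complement.
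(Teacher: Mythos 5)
There is a genuine gap in your part (1), and it is structural rather than a matter of unchecked cases. The edge relation of $C_0^\alpha$ is extremely rigid: by Lemma \ref{lem: i=2}, vertices $c,c'$ span an edge only if $i(c,c')=2$, $g(c,c')=0$, \emph{and} the two separating boundary curves of a regular neighborhood of $c\cup c'$ each cut off a subsurface of genus exactly $1$. Now take $c$ to be a curve in the $B_{2,2}$-position relative to $\alpha$ (the right-hand side of Figure \ref{fig:Bmm} with $m=2$, which embeds once $g\ge 5$). Then $c$ is a vertex of $C_0^\alpha$ with $i(\alpha,c)=2$. Since any vertex $c'$ is homologous to $\alpha$, the algebraic intersection $\hat i(\alpha,c')$ vanishes and $i(\alpha,c')$ is even; so a neighbor with $i(\alpha,c')<2$ would be disjoint from and homologous to $\alpha$ with $g(\alpha,c')=0$, hence isotopic to $\alpha$. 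But $(c,\alpha)$ is \emph{not} an edge: $T_cT_\alpha^{-1}$ is conjugate to $B_{2,2}$, not to $B_0=B_{1,1}$, because conjugation preserves the topological type of the complement of the union of the two curves (genera $(2,2,g-5)$ versus $(1,1,g-3)$). So your inductive step has no valid output at this vertex, and the monovariant $i(\alpha,\cdot)$ cannot be made to strictly decrease along edges. Even in favorable cases, a curve obtained by surgering $c$ along a subarc of $\alpha$ generally changes homology class by $\pm[\alpha]$ and has no reason to sit in the genus-$(1,1)$ configuration required for an edge. This is precisely why the paper does not induct on geometric intersection number: it starts from Putman's connectivity of the homology curve complex $C(\alpha)$ (Theorem \ref{thm: Putman homo cur comp}) and then, in Sections 4.1--4.2, repeatedly applies Putman's connectivity lemma to the auxiliary complexes $C(a,p,q)$, $C^{\mathrm{disj}}(a,-1)$ and $C_0^\alpha(b)$ in order to deform a path in $C(\alpha)$ into one in $C_0^\alpha$, controlling the homological genus $g(\alpha,\cdot)$ (not $i(\alpha,\cdot)$) along the way.

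Your reduction in part (2) to edge-transitivity of $\W(0)$ is correct and matches the paper, but the key step is asserted rather than proved: you propose to ``realize a lift of the involution swapping $a$ and $b$ as an explicit element of $\W(0)$,'' yet the obvious swapping homeomorphism is an involution and in particular does not lie in the Torelli group, let alone in the much smaller normal subgroup $\W(0)$; producing \emph{some} element of $\W(0)$ carrying $c$ to $d$ is exactly the content of the claim, so as written this step is circular. The paper instead obtains it from Lemma \ref{2.3}, whose proof is a concrete lantern-type computation producing, from the element $T_dT_c^{-1}\in\W(0)$ itself and an auxiliary curve, a conjugate of $T_dT_c^{-1}$ in $\W(0)$ that moves $c$ to $d$.
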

\begin{corollary}[$\Ch$–orbits of nonseparating curves] 
\label{cor: ch orbit nonseparating}
Suppose that $g\ge 6$. For any two nonseparating, homologous curves $c$ and $d$,  the following conditions are equivalent:
\begin{itemize}
\item 
$c$ and $d$ are equivalent under $\Ch$, or equivalently, there exists an $f\in \Ch$ such that $T_c=fT_df^{-1}$.
\item 
$T_cT_d^{-1}\in \Ch$.
\item
the homological genus $g(c,d)$ is 0.
\end{itemize}
\end{corollary}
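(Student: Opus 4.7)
The plan is to prove the three conditions are equivalent by establishing (2) $\iff$ (3), then (3) $\Rightarrow$ (1), and finally (1) $\Rightarrow$ (2). The equivalence (2) $\iff$ (3) should be essentially by construction: the definition of homological genus given in Section 2 will characterize $g(c,d) = 0$ precisely as the vanishing of the Chillingworth invariant of the fake bounding pair map $T_c T_d^{-1}$. Once that translation is in place, the remaining implications use Theorem \ref{C_0} directly.

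For (1) $\Rightarrow$ (2) the argument is purely formal. If $T_c = f T_d f^{-1}$ for some $f \in \Ch$, then
\[
T_c T_d^{-1} = f T_d f^{-1} T_d^{-1} = [f, T_d].
\]
Since the Chillingworth homomorphism $\ch \colon \I \to H_1(S_g^1;\Z)$ is $\M$-equivariant (with $\M$ acting on $H_1$ via the symplectic representation), the subgroup $\Ch$ is normal in $\M$. Thus $T_d f^{-1} T_d^{-1} \in \Ch$, and $[f, T_d]$ is a product of two elements of $\Ch$, hence lies in $\Ch$.

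For (3) $\Rightarrow$ (1), I would set $\alpha = d$ and work inside the complex $C_0^{d}$. The curve $d$ is itself a vertex of $C_0^{d}$ since $T_d T_d^{-1} = 1 \in \Ch$, so $g(d,d) = 0$; by hypothesis $c$ is also a vertex. Theorem \ref{C_0}(2) then tells us that $\Ch$ acts transitively on the vertex set of $C_0^{d}$, so there exists $f \in \Ch$ with $f(d) = c$. This yields $T_c = T_{f(d)} = f T_d f^{-1}$, as required.

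The main obstacle lies not in the corollary itself but in the heavy lifting done in Theorem \ref{C_0}(2); once transitivity of the $\Ch$-action is available, the corollary is essentially a formal consequence. The only internal subtleties are (a) confirming from the Section 2 setup that $g(\cdot,\cdot)$ is symmetric and satisfies $g(d,d)=0$, and (b) noting that the relation $f(d) = c$ holds as unoriented curves, which is enough to conclude $T_c = f T_d f^{-1}$ since Dehn twists do not depend on the orientation of the core curve.
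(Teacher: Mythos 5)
Your proof is correct and follows the intended derivation: the paper states this corollary without a separate written proof, precisely because it is the direct consequence of the transitivity of the $\Ch$-action on $C_0^\alpha$ from Theorem \ref{C_0}(2), together with normality of $\Ch$ in $\M$ for $(1)\Rightarrow(2)$ and Proposition \ref{prop: homo genus} (with $[\vec c]\neq 0$ since $c$ is nonseparating and $H$ is torsion-free) for $(2)\Leftrightarrow(3)$. One cosmetic point: $\g(\cdot,\cdot)$ is antisymmetric rather than symmetric (Proposition \ref{prop:g properties}), but that still gives $\g(d,c)=0$ if and only if $\g(c,d)=0$, which is all your step $(3)\Rightarrow(1)$ needs.
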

This corollary analogous to a theorem of Church \cite[Theorem 1.1]{Church}, which provides equivalent conditions for two nonseparating curves to be equivalent under the Johnson kernel $\J$. In the same paper, Church also gave equivalent conditions for two separating curves to be equivalent under $\J$. Hence, we ask:
\begin{question}
    What are the conditions, similar to those in Corollary \ref{cor: ch orbit nonseparating}, for two separating curves to be equivalent under $\Ch$?
\end{question}

Moreover, we do not know whether the hypothesis in Theorem \ref{main1} that $g\ge 6$ is necessary, although our proof requires this condition.
\begin{question} 
Can Theorem \ref{main1} be extended to surfaces with genus $g<6$?
\end{question}

On Page 255 of \cite{JohnsonConjugacy}, Johnson asked whether it is possible to define the Birman--Craggs homomorphisms algebraically without using 4-dimensional topology. We are in a similar situation. When we prove $\Ch/[\Ch,\M]\cong \Z$ (or even just that $\Ch\neq [\Ch,\M]$), we need to use the Casson-Morita's $d$-map, which involves $4$-dimensional topology. Is there an algebraic way to see that $\Ch/[\Ch,\M]\cong \Z$?

\p{Strategy and outline}
We use the lantern relations in various ways. 
In Section 2, we prove a rigidity result about two homologous curves intersecting at two points. This topological result helps us to recognize mapping classes that are conjugates of $B_0$. It then allows us to prove that $[\Ch,\M]$ contains $B_0$ and $T_1^2$. 
In Section 3, we use a homological argument about the cohomology of groups to obtain the key lemma showing that $T_1\in [\Ch,\M]$. This part of proof is indirect and  uses the results in \cite{Normal} in an unexpected way. As consequences, we prove the first equality of Theorem \ref{main1}, Theorem \ref{main2}, and Theorem \ref{thm:homological results}. 
In Section 4, we prove the connectivity of $C_0^\alpha$ and the last equality of Theorem \ref{main1}. The proof of the connectivity of $C_0^\alpha$ uses Putman's trick multiple times on multiple different curve complexes that we construct. We are curious if one can find a more direct proof.

\p{Acknowledgments} The authors would like to thank Dan Margalit and Justin Lanier for the conversations that set this work in motion. The second author is supported by the National Natural Science Foundation of China under the Young Scientists Fund No. 12101349.

\vspace*{4ex}

\section{The Chillingworth homomorphism and the Chillingworth subgroup}
Throughout this paper, we focus on the oriented surface $S_g^1$ of genus $g$ with 1 boundary component. Let $H$ denote $H_1(S_g^1;\Z)$. In this section, we establish some relations in $\Ch$. In 2.1, we define the homological genus of a fake bounding pair map. In 2.2, we prove a criterion for a mapping class to be conjugate to $B_0$. Then  we show that $B_0\in[\Ch,\M]$ in 2.3 and that $B_0\in[\W(0),\M]$ in 2.4. Finally, in 2.5, we show that $T_1^2\in [\Ch,\M]$ where again $T_1$ is a genus 1 separating twist. 

\subsection{Homological genus of a fake bounding pair map}
Two  homologous curves
bound a subsurface if they are disjoint. 
In this subsection, we will show that two homologous curves,  even if they intersect, can still have a well-defined ``homological genus" using the Chillingworth homomorphism.

Let us start by describing the Chillingworth homomorphism. We will follow Johnson's approach in \cite{JohnsonAbelian} and give an equivalent definition which is different from Chillingworth's original definition in \cite{chillingworth1972winding}. Consider the Johnson homomorphism $\tau: \I\to \wedge^3 H$ 
and a contraction map $C: \wedge^3 H \to H$ defined by 
\begin{equation}
    \label{eq: C def}
    C(x \wedge y \wedge z) =2\Big[(x\cdot y) z+ (y\cdot z) x+ (z\cdot x) y\Big]
\end{equation}
where $x\cdot y$ denotes the intersection pairing of $x$ and $y$. 
The Chillingworth homomorphism $\ch:\I\to H$ is defined by 
\begin{equation}
    \label{eq:ch=C tau}
    e(f)=C(\tau(f))\ \ \ \ \ \ \ \ \ \forall f\in \I.
\end{equation}
One can evaluate the Chillingworth homomorphism on a bounding pair map following a standard calculation as demonstrated in  \cite{primer}, Section 6.6.2. We summarize this result as the following lemma. 
\begin{lemma}
\label{lem: ch of BPk is 2k}
Suppose that $(a,b)$ is a bounding pair such that they bound a subsurface   $\Sigma$ of genus $n$. Let $\vec{a}$ denote the curve $a$ together with the orientation such that $\Sigma$ is on the right hand side of $\vec{a}$. Then $$\ch(T_aT_b^{-1})=2n[\vec{a}].$$
\end{lemma}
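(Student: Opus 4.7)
The plan is to unpack the definition $\ch(f) = C(\tau(f))$ in equation (\ref{eq:ch=C tau}) and evaluate both pieces on $T_aT_b^{-1}$ using a well-chosen symplectic basis.

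First, I would fix a symplectic basis of $H = H_1(S_g^1;\Z)$ that is adapted to the bounding pair. Since $a$ and $b$ together bound the genus-$n$ subsurface $\Sigma$, the class $[\vec a]$ (with the convention that $\Sigma$ lies on the right of $\vec a$) equals $[\vec b]$, and this is a primitive class in $H$. Choose disjoint simple closed curves $x_1,y_1,\ldots,x_n,y_n$ in the interior of $\Sigma$, all disjoint from $a$ and $b$, whose homology classes form a symplectic basis of $H_1(\Sigma;\Z)/\langle[\vec a]\rangle$. Then $x_i \cdot y_j = \delta_{ij}$, while $[\vec a]\cdot x_i = [\vec a]\cdot y_i = 0$ since the $x_i,y_i$ are supported in $\Sigma$ and disjoint from $a$.

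Next, I would quote the standard formula for the Johnson homomorphism on a bounding pair, referenced in Farb–Margalit \cite{primer}, Section 6.6.2:
\begin{equation*}
\tau(T_aT_b^{-1}) = [\vec a] \wedge \omega_\Sigma, \qquad \omega_\Sigma = \sum_{i=1}^{n} x_i \wedge y_i \in \wedge^2 H.
\end{equation*}
The orientation convention on $\vec a$ (namely that $\Sigma$ is on the right) is precisely what pins down the sign in this formula.

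Finally, I would apply the contraction $C$ from (\ref{eq: C def}) term by term. For each $i$,
\begin{equation*}
C\bigl([\vec a]\wedge x_i \wedge y_i\bigr) = 2\Big[([\vec a]\cdot x_i)\,y_i + (x_i\cdot y_i)\,[\vec a] + (y_i\cdot[\vec a])\,x_i\Big] = 2[\vec a],
\end{equation*}
using that the first and third terms vanish by the choice of basis. Summing over $i=1,\dots,n$ and combining with (\ref{eq:ch=C tau}) gives $\ch(T_aT_b^{-1}) = 2n[\vec a]$, as claimed.

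The only subtlety, and thus the main thing to be careful about, is the orientation convention: the sign of the Johnson formula in step two must match the convention chosen for $\vec a$ so that $[\vec a]$ appears with coefficient $+2n$ rather than $-2n$. Once that is consistent, the rest is formal linear algebra in $\wedge^3 H$.
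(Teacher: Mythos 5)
Your proof is correct and is exactly the standard calculation the paper is alluding to: the paper gives no proof of its own, simply citing Farb--Margalit Section 6.6.2, and your argument (the Johnson homomorphism formula $\tau(T_aT_b^{-1})=[\vec a]\wedge\sum_i x_i\wedge y_i$ followed by the termwise contraction $C([\vec a]\wedge x_i\wedge y_i)=2[\vec a]$) is precisely that reference's computation. The basis choice and the sign/orientation caveat are handled appropriately, so there is nothing to add.
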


By a curve $a$ we mean an isotopy class of unoriented simple closed curves in $S_g^1$ (i.e. embedded circles). We will use $\vec{a}$ to denote  the curve $a$ with an orientation, and denote its homology class by $[\vec{a}]$. Two curves $a,b$ are said to be \emph{homologous} if they can be oriented such that $[\vec{a}]=[\vec{b}]$ in $H$. If $a$ and $b$ are homologous but not necessarily disjoint, then $T_aT_b^{-1}\in\I$ is called a \emph{fake bounding pair map} as in \cite[Section 6.2.5]{primer}. It turns out that a fake bounding pair $(a,b)$ can still have a well-defined ``genus", even if $a$ and $b$ intersect.  
\begin{proposition}[homological genus of a fake bounding pair]
\label{prop: homo genus}
Consider $S_g^1$ when $g\ge 3.$     If $a$ and $b$ are two homologous curves which can be oriented such that $[\vec{a}]=[\vec{b}]=\alpha$, then the Chillingworth homomorphism $e:\I\to H$ satisfies that
    $$\ch(T_aT_b^{-1})= 2\g(a,b)[\vec{a}]$$
    for some $\g(a,b)\in\Z$. We will call this integer $\g(a,b)$ the \emph{homological genus} of the fake bounding pair $(a,b)$ with respect to $\alpha$.
\end{proposition}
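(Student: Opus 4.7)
The plan is to sidestep a direct computation of $\tau(T_aT_b^{-1})$ and instead exploit the group-theoretic structure of $\ch=C\circ\tau$. The key observation is that $\ch$ is an $\M$-equivariant homomorphism on $\I$, so for any $\phi\in\I$ and any simple closed curve $a$ with $\alpha=[\vec a]$, since $T_a$ acts on $H$ by the transvection $y\mapsto y+(y\cdot\alpha)\alpha$, the commutator $[T_a,\phi]=T_a\phi T_a^{-1}\phi^{-1}\in\I$ satisfies
\[
\ch([T_a,\phi]) \;=\; T_a\cdot \ch(\phi)-\ch(\phi)\;=\;(\ch(\phi)\cdot\alpha)\,\alpha,
\]
automatically a scalar multiple of $\alpha$. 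This is the entire mechanism.

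To set up the commutator, I would first dispose of the trivial case $\alpha=0$: then $a,b$ are both separating, both $T_a,T_b$ lie in the Johnson kernel $\J$, so $T_aT_b^{-1}\in\J\subseteq\ker\tau\subseteq\ker\ch$, and we take $\g(a,b):=0$. In the main case $\alpha\ne 0$, both $a,b$ are nonseparating representatives of the same primitive class $\alpha$. I would then invoke the standard transitivity result (Farb--Margalit) that $\I$ acts transitively on isotopy classes of oriented nonseparating simple closed curves in a fixed primitive homology class, yielding $\phi\in\I$ with $\phi(\vec a)=\vec b$. Then $T_b=\phi T_a\phi^{-1}$, so $T_aT_b^{-1}=[T_a,\phi]$ and the general identity above gives
\[
\ch(T_aT_b^{-1})=(\ch(\phi)\cdot\alpha)\,\alpha.
\]

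The final step is to show the integer coefficient $\ch(\phi)\cdot\alpha$ is even. By Lemma \ref{lem: ch of BPk is 2k}, $\ch$ sends every genuine bounding pair map into $2H$. By the Johnson--Powell theorem, when $g\ge 3$ the Torelli group $\I$ is generated by bounding pair maps, so $\ch(\I)\subseteq 2H$. In particular $\ch(\phi)\in 2H$, hence $\ch(\phi)\cdot\alpha\in 2\Z$, and $\g(a,b):=\tfrac{1}{2}(\ch(\phi)\cdot\alpha)\in\Z$ realizes the proposition. Since $\alpha$ is primitive, the integer $\g(a,b)$ is actually determined uniquely by the equation, independent of the choice of $\phi$.

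I expect the main obstacle to be not conceptual but bookkeeping: matching orientation conventions so that the transvection $T_a-1$ contributes exactly $(y\cdot\alpha)\alpha$ (rather than its negative) for the specific orientation $\alpha=[\vec a]$ used in Lemma \ref{lem: ch of BPk is 2k}, and quoting the Torelli transitivity statement in the bordered setting $S_g^1$. Both are standard but must be handled with care; they, together with the use of Johnson--Powell, are also where the hypothesis $g\ge 3$ enters.
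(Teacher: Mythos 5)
Your proof is correct, but it takes a genuinely different route from the paper's. The paper invokes Putman's connectivity of the homology curve complex $C(\alpha)$ to interpolate a chain $a=a_0,a_1,\dots,a_m=b$ of pairwise-consecutively-disjoint homologous curves, writes $T_aT_b^{-1}$ as the telescoping product $\prod T_{a_i}T_{a_{i+1}}^{-1}$ of \emph{honest} bounding pair maps, and applies Lemma \ref{lem: ch of BPk is 2k} termwise; this has the side benefit of making the additivity and sign statements of Proposition \ref{prop:g properties} immediate. You instead use Torelli transitivity on oriented nonseparating curves in a fixed homology class to write $T_aT_b^{-1}=[T_a,\phi]$ with $\phi\in\I$, and then exploit the $\M$-equivariance of $\ch$ together with the transvection formula to get $\ch(T_aT_b^{-1})=(\ch(\phi)\cdot\alpha)\alpha$, with evenness coming from $\ch(\I)\subseteq 2H$. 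This is a clean mechanism (it is exactly the one the paper uses later, e.g.\ in Proposition \ref{prop: g of b and Ch(b) is zero} and in the computation $\delta(d)=4\ii$), but it is not really more elementary: you trade Putman's connectivity theorem for two other nontrivial inputs, namely the transitivity of $\I$ on oriented curves in a homology class (whose standard proof itself goes through the connectivity of $C(\alpha)$) and the fact that $\I$ is generated by bounding pair maps (Johnson, $g\ge 3$) so that $\ch(\I)\subseteq 2H$. Your handling of the separating case $\alpha=0$ and your remark on well-definedness of $\g(a,b)$ for primitive $\alpha$ are both fine; just make sure you quote the Torelli transitivity statement in the bordered setting $S_g^1$, as you yourself flag.
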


We will prove Proposition \ref{prop: homo genus} using Putman's result on the connectivity of the \emph{homology curve complex} $C(\alpha)$.
\begin{defn}[homology curve complex] We define $C(\alpha)$ as the following.
    \begin{itemize}
\item vertex: $b\in C(\alpha)$ if and only if $b$ is a curve homologous to $\alpha$;
\item edge: $b,c\in C(\alpha)$ form an edge if $i(b,c)=0$.
\end{itemize}
\end{defn}
\begin{theorem}[Putman, Theorem 1.9  in \cite{Putman}]\label{thm: Putman homo cur comp}
When $g\ge 3$, the homology curve complex $C(\alpha)$ is path-connected.
\end{theorem}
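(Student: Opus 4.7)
The plan is to prove path-connectivity of $C(\alpha)$ by an induction on the geometric intersection number $i(a,b)$ of two vertices. The crucial structural observation is that since $[\vec a]=[\vec b]=\alpha$ and the intersection form on $H$ is antisymmetric, the algebraic intersection $\ii(\vec a,\vec b)=\alpha\cdot\alpha=0$, so after putting $a,b$ in minimal position the $i(a,b)$ intersection points split into equal numbers of positive and negative crossings.

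First I would dispatch the base case $i(a,b)=0$: $a$ and $b$ already form an edge. For the inductive step with $i(a,b)=n>0$, the main task is to produce a third vertex $c\in C(\alpha)$ satisfying $i(a,c),i(b,c)<n$, after which the induction hypothesis yields paths $a\sim c\sim b$ inside $C(\alpha)$. For the construction I would use a surgery at an adjacent pair of oppositely signed intersections: traversing $b$ cyclically, two consecutive intersection points $p,q$ must have opposite signs, so the subarc $\beta\subset b$ joining them meets $a$ only at its endpoints. Cutting $a$ at $p,q$ gives two subarcs $a_1,a_2$, and I would form the two candidate simple closed curves $c_i:=a_i\cup\beta'$ where $\beta'$ is a small pushoff of $\beta$ chosen on the correct side. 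Both candidates satisfy $i(a,c_i)=0$ and $i(b,c_i)\le n-2$ after suitable pushoff, and $[c_1]+[c_2]=[a]+[b]=2\alpha$ in $H$.

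The main obstacle will be the homological bookkeeping: knowing only that $[c_1]+[c_2]=2\alpha$ is not enough, since a priori $[c_1]=\alpha+\delta$ and $[c_2]=\alpha-\delta$ for some class $\delta$ measuring the ``offset'' of the cut. To eliminate $\delta$, I would refine the choice of $(p,q)$ using the pairing structure of signs along $a$ and $b$: among all adjacent opposite-sign pairs along $b$, one can locate a pair for which the region cobounded by an $a_i$ and $\beta$ is a disk in $S_g^1$ (a ``bigon-like'' configuration, possibly after first reducing intersections of $a$ with $b$ in subsurface complements). When that region is a disk, $a_i$ and $\beta$ are homologous rel endpoints, forcing $[c_i]=[a]=\alpha$. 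The hypothesis $g\ge 3$ enters here to provide enough genus so that the pushoff $\beta'$ embeds, so that the resulting $c_i$ is essential and nonseparating, and so that the disk-finding step can be arranged in a subsurface complement. Once this surgery lemma is in hand the induction closes and $C(\alpha)$ is path-connected.
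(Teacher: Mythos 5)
This statement is not proved in the paper at all: it is imported from Putman, and Putman's argument is group-theoretic rather than surgery-theoretic. He applies his connectivity lemma (Lemma \ref{Putman} in this paper) to the action of the Torelli group $\I$ on $C(\alpha)$, using the transitivity of $\I$ on simple closed curves in a fixed primitive homology class together with an explicit generating set of $\I$; the hypothesis $g\ge 3$ enters through those inputs, not through finding room for a pushoff. Your direct surgery induction would therefore be a genuinely different and more elementary route if it closed, but the central step has a gap.

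The gap is in the homology bookkeeping, which is exactly the point where $C(\alpha)$ differs from the ordinary curve complex. In the surgery $c_1=a_1\cup\beta'$, $c_2=a_2\cup\beta'$, the arc $\beta$ is traversed in opposite directions by the two surgered curves, so its contributions cancel and one gets $[c_1]+[c_2]=[a]=\alpha$, not $[a]+[b]=2\alpha$; the class of $b$ never enters. Writing $[c_1]=\delta$ and $[c_2]=\alpha-\delta$, the offset $\delta$ depends on the global position of the arc $a_1$ and is not controlled by the choice of adjacent opposite-sign pair, so neither $c_i$ need be a vertex of $C(\alpha)$ (for $i(a,b)=2$ one of the four surgered curves does land in class $\alpha$, essentially by Step 1 of Lemma \ref{lem: i=2}, but for larger intersection number there is no such guarantee). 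Moreover, the proposed repair is vacuous: if some $a_i\cup\beta$ cobounds a disk, then $a$ and $b$ form a bigon and were not in minimal position; and even granting such a configuration, ``$a_i$ homologous to $\beta$ rel endpoints'' forces $[c_i]=0$, not $[c_i]=\alpha$, while $c_{3-i}$ is then isotopic to $a$, so the induction makes no progress. To salvage a surgery proof one must correct the class $\delta$ by an auxiliary move (e.g.\ a Torelli element taking $c_i$ back into the class $\alpha$ without creating intersections), and at that point one has essentially rebuilt the group action on which Putman's proof already rests.
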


\begin{proof}[Proof of Proposition \ref{prop: homo genus}]
    By Theorem \ref{thm: Putman homo cur comp}, we can find a path in $C(\alpha)$ connecting $a$ and $b$:
    $$a_0, a_1,\cdots, a_m\qquad \text{where } a_0=a\text{ and } a_m=b \text{ and } i(a_j,a_{j+1})=0.$$
    Then we have that
    $$T_aT_b^{-1}= T_{a_0}T_{a_1}^{-1} T_{a_1}T_{a_2}^{-1}...T_{a_{m-1}}T_{a_m}^{-1}.$$
    By Lemma \ref{lem: ch of BPk is 2k}, if we orient $\vec{a}_i$ such that $a_i$ and $a_{i+1}$ bound a genus $k$ subsurface on the right side of $\vec{a}_i$, then we have that    $$\ch(T_{a_i}T_{a_{i+1}}^{-1})=2k[\vec{a}_i]=\pm 2k\alpha
    $$
Then we have that 
$$\ch(T_aT_b^{-1})= \sum_{i=0}^{m-1}\ch(T_{a_i}T_{a_{i+1}}^{-1})\in 2\Z \alpha.$$
\end{proof}

Here are some basic properties of $\g$ all of which follow easily from its definition.
\begin{proposition}
\label{prop:g properties}
For any homologous curves $a,b,c$, all of which can be oriented to represent the same homology class $\alpha,$ we have the followings:
\begin{enumerate}
    \item(Antisymmetry) $\g(b,a)=-\g(a,b)$.
    \item(Transitivity) $\g(a,c)=\g(a,b)+\g(b,c).$
    \item(Reversing the orientation) $g^{-\alpha}(a,b)=-\g(a,b)$.
    \item(Determining the sign) If $a,b$ are disjoint, bound a subsurface $\Sigma$ of genus $k$, and are oriented such that $[\vec a]=[\vec b]=\alpha$, then  
$$\g(a,b)=
\begin{cases}
    k &\text{if $\Sigma$ lies on the right side of $\vec a$
and on the left side of $\vec b$}\\
    -k &\text{if $\Sigma$ lies on the left side of $\vec a$
and on the right side of $\vec b$}
\end{cases}$$
    \end{enumerate}    
\end{proposition}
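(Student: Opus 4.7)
The plan is to derive all four statements directly from the defining equation $\ch(T_aT_b^{-1}) = 2\g(a,b)[\vec a]$ together with the fact that $\ch:\I\to H$ is a group homomorphism. Since the homological genus is defined only through its image under $\ch$, each property is forced by a corresponding identity among the mapping classes $T_aT_b^{-1}$, $T_bT_c^{-1}$, etc.

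For antisymmetry (1), I would observe that $T_bT_a^{-1}=(T_aT_b^{-1})^{-1}$, so applying $\ch$ gives $\ch(T_bT_a^{-1})=-\ch(T_aT_b^{-1})=-2\g(a,b)\alpha$, while the defining equation says this equals $2\g(b,a)\alpha$; comparing coefficients yields $\g(b,a)=-\g(a,b)$. For transitivity (2), I would factor
\[
T_aT_c^{-1}=(T_aT_b^{-1})(T_bT_c^{-1})
\]
and again use that $\ch$ is a homomorphism:
\[
2\g(a,c)\alpha = \ch(T_aT_c^{-1})=\ch(T_aT_b^{-1})+\ch(T_bT_c^{-1})=2\bigl(\g(a,b)+\g(b,c)\bigr)\alpha,
\]
so $\g(a,c)=\g(a,b)+\g(b,c)$ (here I use $g\ge 3$ so that $\alpha\ne 0$ can be chosen and the coefficient is uniquely determined when $\alpha$ is primitive; otherwise the equality is forced in $2\Z\alpha$). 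Property (3) is essentially tautological: the class $\ch(T_aT_b^{-1})\in H$ does not depend on the choice of $\alpha$, so rewriting $\ch(T_aT_b^{-1})=2\g(a,b)\alpha=2\bigl(-\g(a,b)\bigr)(-\alpha)$ shows that the coefficient with respect to $-\alpha$ is the negative of the coefficient with respect to $\alpha$.

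For (4), I would invoke Lemma \ref{lem: ch of BPk is 2k} and then carefully match conventions. That lemma says $\ch(T_aT_b^{-1})=2k[\vec a_{\mathrm{right}}]$, where $\vec a_{\mathrm{right}}$ denotes the orientation of $a$ placing $\Sigma$ on its right. In the first case of (4), the chosen orientation $\vec a$ satisfies $[\vec a]=\alpha$ and $\Sigma$ already lies on its right, so $\vec a_{\mathrm{right}}=\vec a$ and hence $\g(a,b)=k$. In the second case, $\vec a$ has $\Sigma$ on its left, so $\vec a_{\mathrm{right}}=-\vec a$, giving $\ch(T_aT_b^{-1})=-2k\alpha$ and therefore $\g(a,b)=-k$.

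There is no real obstacle; this is bookkeeping once one has Lemma \ref{lem: ch of BPk is 2k} and Proposition \ref{prop: homo genus}. The only point requiring mild care is the compatibility check in (4): one must confirm that, with $a$ and $b$ oriented to represent the same homology class $\alpha$, the subsurface $\Sigma$ is always on the right of exactly one of $\vec a,\vec b$ and on the left of the other, so that exactly these two cases exhaust the possibilities. This follows because the oriented boundary of $\Sigma$ (with its induced orientation from $S_g^1$) is a cycle, forcing $[\vec a_{\mathrm{right}}]+[\vec b_{\mathrm{right}}]=0$ in $H$, which is incompatible with $[\vec a]=[\vec b]$ unless one of the two orientations is reversed.
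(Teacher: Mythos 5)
Your proof is correct and takes the intended route: the paper omits any argument, asserting only that these properties ``follow easily from its definition,'' and your verification via the homomorphism property of $\ch$ applied to the identities $T_bT_a^{-1}=(T_aT_b^{-1})^{-1}$ and $T_aT_c^{-1}=(T_aT_b^{-1})(T_bT_c^{-1})$, together with Lemma \ref{lem: ch of BPk is 2k} and the boundary-orientation check for part (4), is exactly that routine bookkeeping. Your parenthetical care about $\alpha\ne 0$ (so the coefficient is determined, since $H$ is torsion-free) is the one genuine point of rigor and you handle it correctly.
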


\begin{proposition}
\label{prop: g of b and Ch(b) is zero}
    For any $f\in\Ch$ and any curve $a$, we have that $\g(a,f(a))=0.$
\end{proposition}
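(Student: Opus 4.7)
The plan is to reduce the claim to showing $e(T_aT_{f(a)}^{-1})=0$ and then to exploit the commutator structure of this element together with the $\M$-equivariance of $e$.

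First, since $f\in\Ch\subseteq\I$ acts trivially on $H$, the curves $a$ and $f(a)$ are homologous; orienting $f(a)$ as $f(\vec a)$, both represent the class $\alpha=[\vec a]$. Proposition \ref{prop: homo genus} then gives
\[
e(T_aT_{f(a)}^{-1})=2\,g(a,f(a))\,\alpha.
\]
When $a$ is nonseparating, $\alpha$ is a primitive element of the free abelian group $H$, so it suffices to prove $e(T_aT_{f(a)}^{-1})=0$. (When $a$ is separating, $\alpha=0$ and $g(a,f(a))$ is understood to be $0$ by convention, so there is nothing to do.)

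The second step is the commutator identity: using $T_{f(a)}=fT_af^{-1}$,
\[
T_aT_{f(a)}^{-1}=T_a\bigl(fT_af^{-1}\bigr)^{-1}=T_afT_a^{-1}f^{-1}=[T_a,f].
\]
Although $T_a\notin\I$, the two factors $T_afT_a^{-1}$ and $f^{-1}$ both lie in the normal subgroup $\I$, so $e$ applied to their product is the sum of their images in $H$.

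The key input is the $\M$-equivariance of $e=C\circ\tau$: the Johnson homomorphism $\tau:\I\to\wedge^3 H$ and the contraction $C:\wedge^3 H\to H$ both respect the natural $\M$-action via the symplectic representation. Hence for every $h\in\M$ and $g\in\I$ one has $e(hgh^{-1})=h_*(e(g))$. Applying this with $h=T_a$ and $g=f$, the hypothesis $e(f)=0$ yields $e(T_afT_a^{-1})=(T_a)_*(0)=0$, while $e(f^{-1})=-e(f)=0$. Therefore
\[
e\bigl([T_a,f]\bigr)=e(T_afT_a^{-1})+e(f^{-1})=0,
\]
and so $g(a,f(a))=0$. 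The only delicate point is the $\M$-equivariance of $e$, which is standard from Johnson's construction in \cite{JohnsonAbelian}; no serious combinatorial or geometric obstacle appears in the argument itself.
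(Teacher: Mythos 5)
Your proof is correct and follows essentially the same route as the paper: write $T_aT_{f(a)}^{-1}=T_afT_a^{-1}f^{-1}$ and apply the Chillingworth homomorphism, using that $\ch(T_afT_a^{-1})=0$ (via $\M$-equivariance of $\ch$ and $\ch(f)=0$) and $\ch(f^{-1})=0$. The extra remarks on primitivity of $\alpha$ and the separating case are fine but not needed for the paper's one-line argument.
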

\begin{proof}
    $\ch(T_aT_{f(a)}^{-1}) =\ch(T_afT_{a}^{-1}f^{-1}) = \ch(T_afT_{a}^{-1})+\ch(f^{-1})=0+0.$
\end{proof}

For example, the curves $b,c$ on the left side of Figure \ref{fig:Bmm} below satisfy $\g(b,c)=0$. This is because can find a third nonseparating curve $a$ such that $(a,b)$ and $(a,c)$  both bound two  subsurfaces of genus $1$ that are on the same side of $a$. By Proposition \ref{prop:g properties}, we can fix a homology class  $\alpha$ such that 
$$\g(b,c)=\g(a,c)-\g(a,b)=1-1=0.$$
As in the introduction, we will use $B_0$ to denote any element in $\M$ that  is conjugate  to $T_bT_c^{-1}$. More generally, the curves $b,c$ on the right side of Figure \ref{fig:Bmm} also  satisfy $\g(b,c)=0$. In this case, we will use $B_{m,m}$ to denote  any element in $\M$ that  is conjugate  to $T_bT_c^{-1}$. In particular, we have $B_0=B_{1,1}$. The fake bounding pair map $B_{m,m}$ is of homological genus 0 for any $m$. 
    \begin{figure}[h]
    \centering
    \vspace{-10pt}
    \includegraphics[width=1\linewidth]{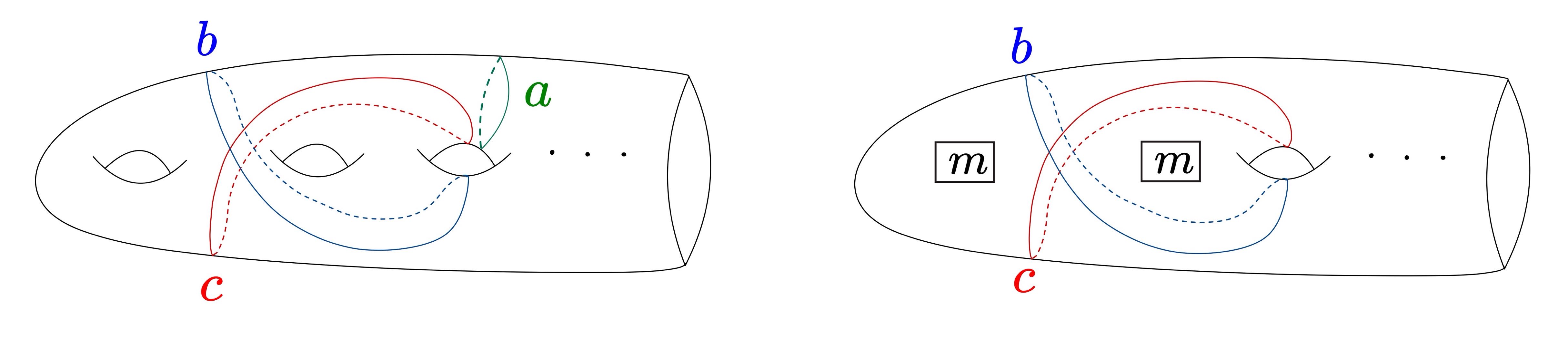}
        \vspace{-20pt}
    \caption{The boxed value $m$ denotes the genus of the subsurface.}
    \label{fig:Bmm}
\end{figure}

\vspace{5pt}

\subsection{Classification of homological genus 0 bounding pair maps}
In this subsection, we show that any homological genus 0 bounding pair maps $T_bT_c^{-1}$ such that $i(b,c)=2$ must be conjugate to either $B_0$ or $B_{m,m}$ as in Figure \ref{fig:Bmm} above.

\begin{lemma}[\textbf{Classification of homological genus 0 bounding pair maps}]
\label{lem: i=2}
    For any two curves $b$ and $c$, the mapping class $T_bT_c^{-1}$ is conjugate to $B_{m,m}$ for some positive integer $m$ if and only if the following conditions hold:
    \begin{enumerate}
        \item $b,c$ are nonseparating and homologous.
        \item $i(b,c)=2$.
        \item $\g(b,c)=0$.
    \end{enumerate}
    Furthermore, $T_bT_c^{-1}$ is conjugate to $B_{1,1}=B_0$ if $b\cup c$ are contained in a subsurface of genus $\le 4$ with 1 boundary component.
\end{lemma}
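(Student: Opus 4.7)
The ``only if'' direction is immediate from Figure~\ref{fig:Bmm}: in the standard picture of $B_{m,m}$, the curves $b$ and $c$ are manifestly nonseparating and homologous, meet geometrically in two points, and $\g(b,c)=0$ follows from the computation preceding the figure.

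For the ``if'' direction, my plan is to produce, from the three hypotheses, a third curve $a$ disjoint from $b\cup c$ such that $(a,b)$ and $(a,c)$ are bounding pairs of equal genera lying on the same side of $a$, and then to conclude by the change-of-coordinates principle. The auxiliary curve will come from analyzing the regular neighborhood $N = N(b \cup c)$. Because $b$ and $c$ are homologous, their algebraic intersection vanishes, so the two transverse intersection points have opposite signs; an Euler characteristic count on the $4$-valent graph $b\cup c$ gives $\chi(N) = -2$, hence $N$ is either a four-holed sphere or a twice-holed torus. A direct ribbon-graph trace of $\partial N$ at the two opposite-sign crossings, combined with the hypothesis that $b$ and $c$ are nonseparating and homologous in $S_g^1$, pins $N$ down to the twice-holed torus; inside such an $N$, the natural curve $a$ obtained by coherently resolving both crossings is disjoint from both $b$ and $c$, is nonseparating in $S_g^1$, and both $(a,b)$ and $(a,c)$ are bounding pairs cobounding subsurfaces $\Sigma_{a,b}, \Sigma_{a,c}$ of some genera $m_1, m_2 \ge 0$.

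The hypothesis $\g(b,c)=0$ now enters decisively. By Proposition~\ref{prop:g properties}(2) and (4),
\[
0 \;=\; \g(b,c) \;=\; \g(b,a) + \g(a,c) \;=\; \ep_1 m_1 + \ep_2 m_2,
\]
with $\ep_i \in \{\pm 1\}$ recording the side of $a$ on which each subsurface lies. A short case analysis on the signs, together with the fact that $i(b,c) = 2$ precludes $m_1 = m_2 = 0$ (else $b$ and $c$ would be isotopic), forces $m_1 = m_2 =: m \ge 1$ with $\Sigma_{a,b}$ and $\Sigma_{a,c}$ on the same side of $a$. The triple $(a,b,c)$ now realizes precisely the topological pattern of Figure~\ref{fig:Bmm}, so the change-of-coordinates principle produces an $f \in \M$ with $f T_b T_c^{-1} f^{-1} = B_{m,m}$ after matching up the unique remaining complementary subsurface by a genus count.

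For the furthermore clause, if $b \cup c$ is contained in a subsurface $\Sigma \subset S_g^1$ of genus at most $4$ with one boundary component, then so is the union $N \cup \Sigma_{a,b} \cup \Sigma_{a,c}$; the neighborhood $N$ contributes genus $1$ and each bounding subsurface contributes genus $m$, so a direct handle count forces $m = 1$, whence $T_b T_c^{-1}$ is conjugate to $B_{1,1} = B_0$. The main obstacle I anticipate is the ribbon-graph verification ruling out the four-holed sphere neighborhood; once $N \cong \Sigma_{1,2}$ is established, the remaining steps are routine change-of-coordinates and handle counting.
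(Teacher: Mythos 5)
Your ``only if'' direction matches the paper's. The ``if'' direction, however, contains a genuine topological error at its foundation. You assert that a ribbon-graph trace at the two \emph{opposite-sign} crossings pins the regular neighborhood $N$ of $b\cup c$ down to a twice-holed torus, and you flag ruling out the four-holed sphere as the main remaining obstacle. This is backwards: for two simple closed curves meeting transversely in two points, the same-sign case (algebraic intersection $\pm 2$) yields the twice-holed torus, while the opposite-sign case yields the four-holed sphere. Since $b$ and $c$ are homologous, $\ii(b,c)=0$, so the two crossings have opposite signs and $N$ is a lantern --- exactly as the paper takes it to be. Consequently your auxiliary curve $a$ ``obtained by coherently resolving both crossings'' does not behave as claimed: the curves disjoint from $b\cup c$ inside $N$ are (isotopic to) the four lantern boundary components $AC^{-1}, BC, DB^{-1}, AD$, and it is \emph{not} automatic that any of them forms a bounding pair with both $b$ and $c$. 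In fact the paper's Step 1 must prove that two of these four boundary curves ($AC^{-1}$ and $DB^{-1}$) are separating --- a nontrivial point established by exhibiting a curve $e$ whose algebraic intersections with $AC^{-1}$ and $DB^{-1}$ would contradict $[AC^{-1}]=[DB^{-1}]$ if they were nonseparating --- and only then does one get the bounding pair $(AD,BC)$ to play the role of your $a$. Your proposal skips this step entirely because the wrong model for $N$ makes it invisible.

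The rest of your skeleton does parallel the paper: the relation $0=\g(b,a)+\g(a,c)$ with a sign analysis forcing equal genera on the same side of the auxiliary curve (the paper's Steps 2--3, which also pin down that the boundary of $S_g^1$ lies in the complementary piece $\Sigma_3$ bounded by the bounding pair --- a fact you gloss over but which is needed for the change-of-coordinates step), and a genus count for the furthermore clause. But your handle count there also inherits the error: with $N$ a lantern it contributes genus $0$, and the relevant subsurface is $N\cup\Sigma_1\cup\Sigma_2$, of genus $2m$ with \emph{two} boundary components; the contradiction for $m\ge 2$ comes from the impossibility of embedding such a piece in a genus $\le 4$ subsurface with one boundary component. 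So the argument is repairable, but only after replacing the twice-holed torus model with the lantern and supplying the missing separating/nonseparating dichotomy for its boundary curves.
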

\begin{proof}
It is clear from Figure \ref{fig:Bmm} that the three conditions are necessary. We need to show that they are also sufficient. Choose orientations on $b$ and $c$ such that $[\vec{b}]=[\vec{c}]$ in $H$. We will fix the orientations throughout the proof, and for simplicity still use $b,c$ to denote the oriented curves $\vec{b},\vec{c}$.  

Let $N$ denote a small neighborhood of $b\cup c$. The closure of $N$ forms a lantern, i.e. a genus 0 subsurface with 4 boundary components. Let $A,B,C,D$ denote the four oriented arcs in ${b}$ and ${c}$ connecting their two intersection points such that ${b}=AB$ and ${c}=CD$. Here we follow the convention that $AB$ means the curve going along $A$ first  and then  $B$. See Figure \ref{fig:lantern bc} below.  The four boundary components are isotopic to $AC^{-1}, BC, DB^{-1}, AD$, all of which are non-contractible curves, because otherwise $b$ and $c$ could be isotoped to be disjoint.

\begin{figure}[h]
    \centering
    \includegraphics[width=1\linewidth]{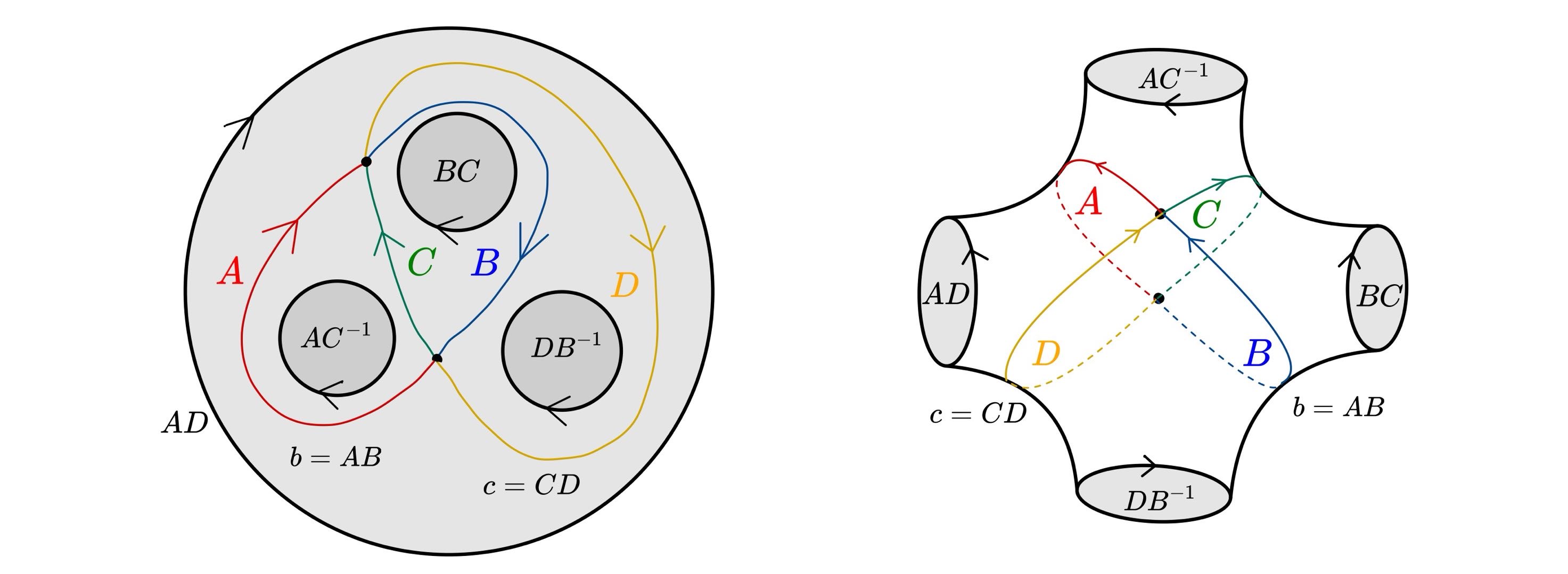}
    \caption{Left:  A small neighborhood of $b\cup c$ forms a lantern. $A,B,C,D$ denote the four arcs such that $b=AB$ and $c=CD$ with orientations. Right: A different picture of the same lantern $N$.}
    \label{fig:lantern bc}
\end{figure}

    \begin{figure}[h]
        \centering
        \includegraphics[width=1\linewidth]{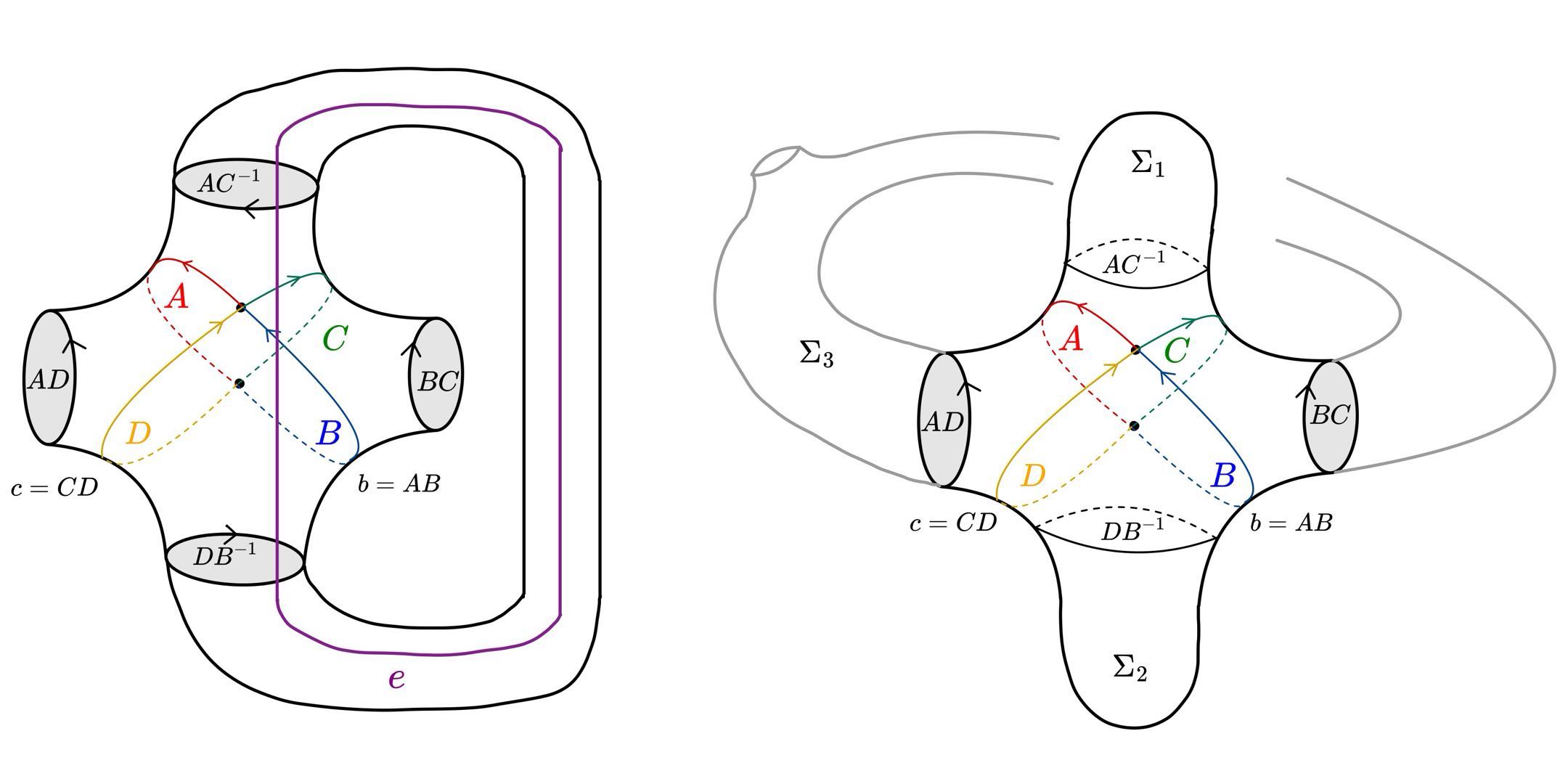}
    \caption{Left: 
    the purple curve $e$ intersects $AC^{-1}$ and $DB^{-1}$ once but with opposite signs. This is cannot happen. Right: the three subsurfaces $\Sigma_1, \Sigma_2, \Sigma_3$ outside of $N$.}
        \label{fig:recognize Bmm}
    \end{figure}

The rest of the proof will proceed in the following steps.
\vspace{5pt}

\textbf{Step 1: $\mathbf{AC^{-1}}$ and $\mathbf{DB^{-1}}$ are separating curves, and $\mathbf{(BC,AD)}$ forms a bounding pair.} 
We have the following equalities in $H=H_1(S_g^1;\Z)$:
    \begin{align*}
    [AC^{-1}] &= [ABB^{-1}C^{-1}] = [AB]+[B^{-1}C^{-1}] \\
    &= [CD]+[B^{-1}C^{-1}] &\text{since $[AB]=[b]=[c]=[CD]$}\\
    &= [DB^{-1}].
    \end{align*}
    If $AC^{-1}$ is nonseparating, then the two homologous curves $AC^{-1}$ and $DB^{-1}$ form a bounding pair, with appropriate orientations. Then we can find an oriented  curve $e$ depicted on the left hand side of Figure \ref{fig:recognize Bmm} such that the following algebraic intersection numbers are equal:
    $$\hat{i}(e,AC^{-1}) = -\hat{i}(e,DB^{-1})=1. $$
    This contradicts with our previous calculation that $[AC^{-1}]=[DB^{-1}]$. Therefore, $AC^{-1}$ must be separating, and so must be $DB^{-1}$.    As a consequence, $BC$ has to be nonseparating because
    $$0\ne [b]= [AB]=[AC^{-1}]+[BC] = 0+[BC].$$
    By a similar argument, $[b]=[AD]$. Hence $(AD,BC)$ forms a bounding pair. In summary,  the complement $S_g^1\setminus N$ consists of three components. Among them, let $\Sigma_1$ and $\Sigma_2$ denote the subsurfaces bounded by the separating curves $AC^{-1}$ and $DB^{-1}$, respectively. Let $\Sigma_3$ denote the subsurface bounded by the bounding pair $(AD,BC)$. See the right side of Figure \ref{fig:recognize Bmm}. 

\vspace{5pt}

\textbf{Step 2: the boundary component of $S_g^1$ lies in $\Sigma_3$.}
First, we show that the boundary component of $S_g^1$ cannot lie on $\Sigma_1$. Let us assume the contrary and reach a contradiction. Then in this case, the subsurface bounded by the bounding pair $(AD,b)$ has positive genus (since it is the union of $\Sigma_2$ and a pair of pants) and is on the right side of $AD$ and on the left side of $b$. Therefore, by Proposition \ref{prop:g properties}, if we fix $\alpha:=[AD]=[b]$, then we have that
    $$\g(AD,b)=\text{genus of }\Sigma_2>0.$$
    In contrast, the subsurface bounded by the bounding pair $(AD,c)$ is on the left side of $AD$ and on the right side of $c$. Therefore, for the same choice $\alpha=[AD]=[c]$, we have by Proposition \ref{prop:g properties} that
    $$\g(AD,c)\leq 0.$$
    However, by Proposition \ref{prop:g properties} and our assumption that $\g(b,c)=0$, we have that 
    $$\g(AD,b) = \g(AD,b)+\g(b,c) =\g(AD,c).$$
    This is a contradiction. Similarly, the boundary component cannot lie on $\Sigma_2$, either. Therefore, the boundary component must lie on $\Sigma_3$, as we depicted on the right side  of Figure \ref{fig:recognize Bmm}. 
\vspace{5pt }

\textbf{Step 3: $\mathbf{\Sigma_1}$ and $\mathbf{\Sigma_2}$ have the same genus.} 
    Let $m,m'$ denote the genera of the subsurfaces $\Sigma_1$ and $\Sigma_2$, respectively. Again we fix $\alpha:=[b]=[c]=[AD]$. Then by Proposition \ref{prop:g properties}, we have that 
    $$m'=\g(AD,b)=\g(AD,b)+\g(b,c)=\g(AD,c)=m.$$
    Hence, the subsurface $N\cup \Sigma_1\cup\Sigma_2$ is  of genus $2m$ with 2 boundary components, as depicted on the right side of Figure \ref{fig:recognize Bmm}, which is the same picture as the right side of \ref{fig:Bmm}, up to a homeomorphism. Hence, $T_bT_c^{-1}$ is conjugate to $B_{m,m}$.
\vspace{5 pt}

    \textbf{Step 4: if $\mathbf{b,c}$ are contained in a subsurface $\Sigma$ of genus $\mathbf{\le 4}$ with 1 boundary component, then $\mathbf{m=1}$.}  Suppose that $m\ge 2$. Notice that both $AC^{-1}$ and $DB^{-1}$ are in the lantern $N$ which is a small neighborhood of $b\cup c$, and therefore are also in the subsurface $\Sigma$. Let $d$ denote the boundary of $\Sigma$. Then $d$ is a separating curve in the large surface $S_g^1$. Since $d$ and $N$ are disjoint, $d$ must be in one of the subsurfaces $\Sigma_i$ for $i=1,2,3$. If $d$ is in $\Sigma_1$, then $\Sigma$ is entirely contained in $\Sigma_1$ because we proved previously that the boundary of $S_g^1$ is in $\Sigma_3$. This contradicts our assumption that $b,c$ are in $\Sigma$. Similarly, $d$ cannot be in $\Sigma_2$. Hence, $d$ must be in $\Sigma_3$. In this case, $\Sigma$ contains the subsurface  $N\cup \Sigma_1\cup\Sigma_2$. However, $N\cup \Sigma_1\cup\Sigma_2$ is a subsurface of genus $2m\ge 4$ with 2 boundary components, while $\Sigma$ is of genus $\leq4$ with 1 boundary component. It is impossible that $\Sigma$ contains $N\cup \Sigma_1\cup\Sigma_2$. 
    
    Hence, we conclude that $m=1$ and that $T_bT_c^{-1}$ is conjugate to $B_{1,1}=B_0$. 
\end{proof}

\subsection{\boldmath The proof of that $B_0\in [\Ch,\M]$}
In this subsection, we prove that $B_0$ is an element in $[\Ch,\M]$.  Let $a_i$ for $i=1,2,3$ and $d_1$ be the curves in Figure \ref{fig:a1a2a3d1} below. Define $H_0:=T_{a_1}T_{a_3}T_{a_2}^{-2}$
and $d_3:=H_0(d_1)$. 

\begin{figure}[h]
    \centering
    \includegraphics[width=0.6\linewidth]{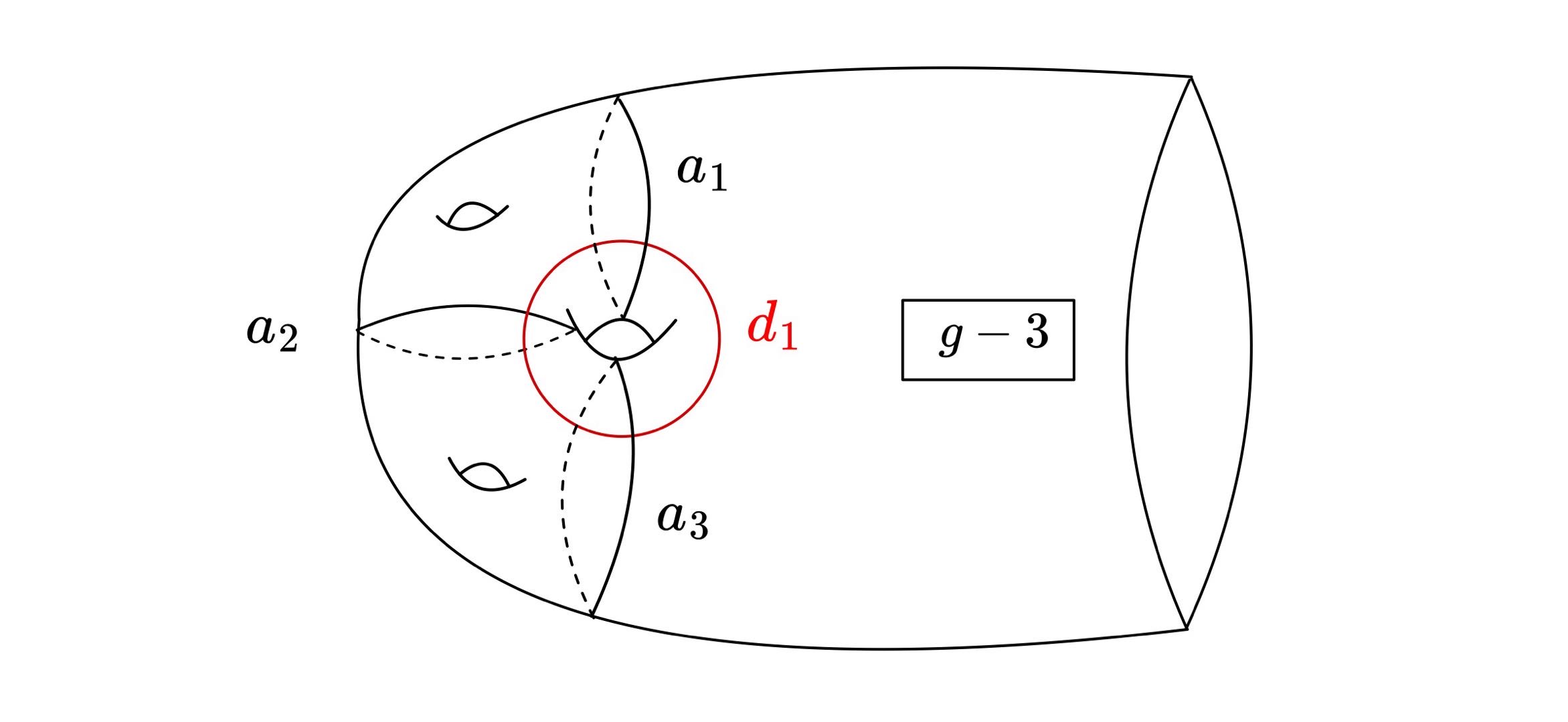}
    \vspace{-10pt}
    \caption{The number in the box denotes the genus of the subsurface.}
    \label{fig:a1a2a3d1}
\end{figure}

\begin{proposition}
    \label{2.2}
We have the following results:
\begin{enumerate}
    \item $H_0\in\Ch$.
    \item $[T_{d_1}, H_0]$ is conjugate to $B_0$ in $\M$.
    \item $\W(0):=\langle\langle B_0\rangle\rangle\le [\Ch,\M]$.
\end{enumerate}

\end{proposition}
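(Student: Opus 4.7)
The plan is to treat the three claims in order, since (3) follows formally from (1) and (2).

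For part~(1), first observe that the three curves $a_1,a_2,a_3$ of Figure~\ref{fig:a1a2a3d1} are pairwise homologous: consecutive pairs bound subsurfaces of genus~$1$, forcing $[\vec a_1]=[\vec a_2]=[\vec a_3]=:\alpha$ in $H$. Each Dehn twist $T_{a_i}$ then acts on $H$ by the same transvection $x\mapsto x+(x\cdot\alpha)\alpha$, so $H_0=T_{a_1}T_{a_3}T_{a_2}^{-2}$ acts trivially on $H$ and lies in $\I$. To compute $\ch(H_0)$, decompose
\[
H_0=(T_{a_1}T_{a_2}^{-1})(T_{a_3}T_{a_2}^{-1})
\]
into two genuine bounding pair maps. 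Because the two genus-$1$ subsurfaces sit on \emph{opposite} sides of $a_2$, Proposition~\ref{prop:g properties}(4) yields $\g(a_1,a_2)=-\g(a_3,a_2)=1$ (for a fixed orientation), and Lemma~\ref{lem: ch of BPk is 2k} then gives $\ch(H_0)=2\alpha-2\alpha=0$, so $H_0\in\Ch$.

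For part~(2), the strategy is to apply the classification Lemma~\ref{lem: i=2} to the commutator. Using the standard Dehn-twist conjugation identity $g T_c g^{-1}=T_{g(c)}$,
\[
[T_{d_1},H_0] = T_{d_1}\cdot\bigl(H_0\,T_{d_1}^{-1}H_0^{-1}\bigr) = T_{d_1}T_{d_3}^{-1},
\]
so it remains to verify the four hypotheses of Lemma~\ref{lem: i=2} for the pair $(d_1,d_3)$. Since $H_0$ is a mapping class and $d_1$ is nonseparating, $d_3=H_0(d_1)$ is nonseparating; and since $H_0\in\I$ by (1), $[\vec d_3]=[\vec d_1]$. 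The homological genus $\g(d_1,d_3)=0$ follows immediately from Proposition~\ref{prop: g of b and Ch(b) is zero} together with $H_0\in\Ch$. The remaining two conditions---that $i(d_1,d_3)=2$ and that $d_1\cup d_3$ is contained in a subsurface of genus~$\le 4$ with one boundary component---are the main obstacle of the proposition: they require drawing $d_3=H_0(d_1)$ explicitly and checking intersection numbers and containment directly from Figure~\ref{fig:a1a2a3d1}. Because $H_0$ is supported in a neighborhood of $a_1\cup a_2\cup a_3$, both $d_1$ and $d_3$ sit inside a subsurface of controlled small genus together with $d_1$, and both verifications should follow by inspection.

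Part~(3) is then a short formal consequence. The subgroup $\Ch$ is normal in $\M$ because the Chillingworth homomorphism $\ch\colon\I\to H$ is equivariant with respect to the conjugation action of $\M$ on $\I$ and the symplectic action of $\M$ on $H$; hence $[\Ch,\M]$ is normal in $\M$. By~(2), $B_0$ is $\M$-conjugate to $[T_{d_1},H_0]$, and by~(1) this commutator lies in $[\M,\Ch]=[\Ch,\M]$. Normality of $[\Ch,\M]$ then gives $B_0\in[\Ch,\M]$, and therefore $\W(0)=\langle\langle B_0\rangle\rangle\le [\Ch,\M]$.
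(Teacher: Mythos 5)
Your parts (1) and (3) are correct and essentially identical to the paper's argument: the paper likewise writes $H_0$ as a product of two genus-$1$ bounding pair maps whose Chillingworth images cancel (it uses $(T_{a_1}T_{a_2}^{-1})(T_{a_2}T_{a_3}^{-1})^{-1}$, which is the same as your decomposition since the $a_i$ are pairwise disjoint), and part (3) is the same one-line observation. The problem is part (2), where you correctly identify $i(d_1,d_3)=2$ as ``the main obstacle'' and then do not prove it, asserting it ``should follow by inspection.'' This is a genuine gap, and it is not one that inspection easily closes: $d_3=H_0(d_1)$ is the image of $d_1$ under a composition of four Dehn twists, and verifying that the resulting representative has exactly two essential intersections with $d_1$ (no residual bigons) is precisely the kind of picture-chasing that is error-prone; the paper deliberately avoids drawing $d_3$ at all.

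The paper's actual argument is worth recording, since it is short. Set $d_2:=T_{a_1}T_{a_2}^{-1}(d_1)$, so that $d_3=T_{a_3}T_{a_2}^{-1}(d_2)$ and $(d_1,d_2)$, $(d_2,d_3)$ are genus-$1$ bounding pairs. Since each of $a_2$ and $a_3$ meets $d_1$ (equivalently $d_2$) in a single point, one gets the upper bound $i(d_1,d_3)=i\bigl(d_1,\,T_{a_3}T_{a_2}^{-1}(d_2)\bigr)\le 2$; since $d_1$ and $d_3$ are homologous, $\hat i(d_1,d_3)=0$ forces $i(d_1,d_3)$ to be even, so it is $0$ or $2$. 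If it were $0$, then $(d_1,d_3)$ would be an honest bounding pair with $\g(d_1,d_3)=0$ (by Proposition~\ref{prop: g of b and Ch(b) is zero}), hence a genus-$0$ bounding pair, hence $d_1$ and $d_3$ would be isotopic; this is ruled out by exhibiting a test curve $e$ with $i(d_1,e)\ne i(d_3,e)$. Therefore $i(d_1,d_3)=2$ and Lemma~\ref{lem: i=2} applies. Your remark that the containment hypothesis (a genus $\le 4$ subsurface containing $d_1\cup d_3$, needed to get $B_0$ rather than some $B_{m,m}$) holds because everything lives in a neighborhood of $a_1\cup a_2\cup a_3\cup d_1$ is the right idea and matches what the paper implicitly uses; you should make that neighborhood and its genus explicit to fully close part (2).
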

\begin{proof}
To prove (1), we compute the Chillingworth homomorphism: 
\begin{align*}
    \ch(H_0)&=\ch(T_{a_1}T_{a_3}T_{a_2}^{-2})= \ch\Big((T_{a_1}T_{a_2}^{-1})(T_{a_2}T_{a_3}^{-1})^{-1}\Big)\\
    &= \ch(T_{a_1}T_{a_2}^{-1})-\ch(T_{a_2}T_{a_3}^{-1})\\
    &= 2 [a_1]-2[a_2]=0
\end{align*}
The last line follows from Lemma \ref{lem: ch of BPk is 2k} where we orient $a_1$ such that the subsurface bounded by $(a_1,a_2)$ is on the right side of $a_1$ and we orient $a_2$ such that $[a_1]=[a_2]$, and hence the subsurface bounded by $(a_2,a_3)$ is on the right side of $a_2$.

To prove (2), we first observe that $$[T_{d_1},H_0]=T_{d_1}T_{H_0(d_1)}^{-1} =T_{d_1}T_{d_3}^{-1} \in [\Ch,\M].$$
Let $d_2:=T_{a_1}T_{a_2}^{-1}(d_1)$. As in the proof of \cite[Theorem 4.5]{Normal}, we can see that $i(d_1,d_2)=0$ and that $(d_1,d_2)$ form a genus 1 bounding pair. By the same argument, $(d_2,d_3)$ form a genus 1 bounding pair.

Moreover, we claim that $i(d_1,d_3)=2$. First of all, since each of $a_2$ and $a_3$ intersects $d_1$ at one point, we have that $i(d_1,d_3)=i(d_1,T_{a_3}T_{a_2}^{-1}(d_2))\le 2$.  Hence, $i(d_1,d_3)$ is either 0 or 2. It cannot be the case that $i(d_1,d_3)=0$ because if so, then $d_1,d_3$ form a bounding pair. By Proposition \ref{prop: g of b and Ch(b) is zero}, we have that $\g(d_1,d_3)=\g(d_1,H_0(d_1))=0$, which implies that $d_1$ and $d_3$ are isotopic. This is impossible since one can easily find a curve $e$ such that $i(d_1,e)\ne i(d_3,e)$. Hence, by Lemma \ref{lem: i=2}, the element $T_{d_1}T_{d_3}^{-1}$ is conjugate to $B_0$. 

To prove (3), we simply observe that by (2), $B_0$ is conjugate to $[T_{d_1}, H_0]\in [\Ch,\M]$.
\end{proof}


We have the following generating set of $\Ch$. 
\begin{lemma}\label{normalch}
The Chillingworth group $\Ch$ is normally generated by $T_1,T_2,B_0$, and also by $T_1,T_2,H_0$. 
\end{lemma}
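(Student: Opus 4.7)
The plan is first to reduce both claims to a single claim, then to prove the reduced claim using a group-theoretic generating set of $\Ch$. By Proposition~\ref{2.2}(2), $B_0$ is conjugate in $\M$ to $[T_{d_1}, H_0]$, hence $B_0 \in \langle\langle H_0\rangle\rangle^{\M}$. Combined with $H_0 \in \Ch$ from Proposition~\ref{2.2}(1), this shows that once we establish $\Ch = \langle\langle T_1, T_2, B_0\rangle\rangle^{\M}$, we immediately get $H_0 \in \langle\langle T_1, T_2, B_0\rangle\rangle^{\M}$ and hence the chain $\langle\langle T_1, T_2, H_0\rangle\rangle^{\M} \subseteq \langle\langle T_1, T_2, B_0\rangle\rangle^{\M} \subseteq \langle\langle T_1, T_2, H_0\rangle\rangle^{\M}$, so both normal closures equal $\Ch$.

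It therefore suffices to prove $\Ch = \langle\langle T_1, T_2, B_0\rangle\rangle^{\M}$. I would start from a known generating set of $\Ch$ as a group: by Johnson's theorem $\I$ is generated by genus-$1$ bounding pair maps, and any product of such maps lying in $\Ch$ must have Chillingworth classes summing to zero in $H$. Using Putman's connectivity of the homology curve complex (Theorem~\ref{thm: Putman homo cur comp}), such a product can then be rewritten as a product of (i) separating twists and (ii) homological genus $0$ fake bounding pair maps $T_aT_b^{-1}$ with $\g(a,b) = 0$.

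For (i), a classical lantern-relation induction on the genus of the separating curve expresses every separating twist as a product of $\M$-conjugates of $T_1$ and $T_2$. For (ii), I would apply Putman's theorem once more to reduce to the case $i(a,b) \leq 2$; the case $i(a,b)=0$ is trivial, and for $i(a,b)=2$ Lemma~\ref{lem: i=2} shows that $T_aT_b^{-1}$ is conjugate to some $B_{m,m}$. Each $B_{m,m}$ with $m \geq 2$ is then reduced to $B_0$ modulo separating twists via a lantern relation applied to the subsurface $N \cup \Sigma_1 \cup \Sigma_2$ from the proof of Lemma~\ref{lem: i=2}, and induction on $m$ bottoms out at $B_{1,1} = B_0$.

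The hardest part will be this last step: the explicit lantern reduction of $B_{m,m}$ to $B_{m-1,m-1}$ modulo separating twists. It requires a careful choice of ambient lantern (with separating boundary curves) that contains $N \cup \Sigma_1 \cup \Sigma_2$ in a controlled way, and a verification that the resulting error terms genuinely lie in $\langle\langle T_1, T_2\rangle\rangle^{\M}$. Once this is in place, combining the reductions of (i) and (ii) yields $\Ch \subseteq \langle\langle T_1, T_2, B_0\rangle\rangle^{\M}$, and by the equivalence established at the outset, also $\Ch = \langle\langle T_1, T_2, H_0\rangle\rangle^{\M}$.
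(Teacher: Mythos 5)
Your reduction of the second claim to the first is fine and matches the paper. The trouble is with the first claim, where the paper's argument is short and algebraic: $\langle\langle T_1,T_2\rangle\rangle=\J=\ker\tau$, and the $\Sp$-orbit of $\tau(B_0)$ generates $\tau(\Ch)=\ker C$, so the normal closure of $T_1,T_2,B_0$ contains $\ker\tau$ and surjects onto $\tau(\Ch)$, hence equals $\Ch$. Your topological route has a genuine gap at its very first step: you assert that a product of genus-$1$ bounding pair maps whose Chillingworth classes sum to zero ``can then be rewritten'' as a product of separating twists and homological genus $0$ fake bounding pair maps, citing Putman's connectivity (Theorem~\ref{thm: Putman homo cur comp}). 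Putman's theorem connects two \emph{fixed} homologous curves by a chain of disjoint homologous curves; it says nothing about regrouping a product $\prod_i T_{a_i}T_{b_i}^{-1}$ whose twist curves represent many different homology classes. The condition $\sum_i 2[\vec a_i]=0$ in $H$ does not allow you to pair the factors into cancelling or genus-$0$ pieces (consider three classes summing to zero with no two of them cancelling). Modulo $\J$, your rewriting claim is exactly the statement that the $\tau$-images of homological genus $0$ fake bounding pair maps generate $\ker C$ --- which is the representation-theoretic content the paper invokes and which you have not supplied. So the hard part of the lemma has been asserted rather than proved.

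There are two further problems downstream. First, your reduction of a genus-$0$ fake bounding pair map to the case $i(a,b)\le 2$ ``by Putman's theorem once more'' goes the wrong way: a path $a=a_0,\dots,a_m=b$ of pairwise-disjoint consecutive homologous curves factors $T_aT_b^{-1}$ into honest bounding pair maps with $i(a_j,a_{j+1})=0$ whose genera merely \emph{sum} to zero, so the individual factors are not of homological genus $0$ and you are back at the regrouping problem; it does not produce a factor with $i=2$ to which Lemma~\ref{lem: i=2} applies. Second, the lantern induction reducing $B_{m,m}$ to $B_{m-1,m-1}$ modulo separating twists is acknowledged as the hardest step but not carried out, and the paper never needs it (nor proves it). I would recommend replacing the whole second half of your argument with the paper's two facts: normal generation of $\J$ by $T_1,T_2$, and generation of $\ker C$ by the $\Sp$-orbit of $\tau(B_0)$.
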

\begin{proof}
We already know that $T_1, T_2$ normally generate $\J=\ker \tau$. Moreover, $\tau(B_0)$ and its $\Sp$-image generate the image $\tau(\Ch)=U=\ker C$. The first claim thus follows. By (2) of Proposition  \ref{2.2}, we know that $\langle \langle H_0\rangle \rangle$ contains $B_0$. The second claim now follows from the first.
\end{proof}


\subsection{\boldmath The proof of that $\W(0)= [\W(0),\M]$}
In this subsection, we will use lantern relations to prove that $\W(0)= [\W(0),\M]$.
\begin{lemma}
\label{2.3}
For any two curves $c$ and $d$ such that $T_dT_c^{-1}$ is conjugate to $B_0$, there exists a curve $a$ such that for $b:=T_dT_c^{-1}(a)$, the mapping class $T_aT_b^{-1}$ is also conjugate to $B_0$. As a consequence, we have that 
\[
     \W(0)=[\W(0),\M].\]
\end{lemma}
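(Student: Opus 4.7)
The plan has two halves: first produce a curve $a$ that makes $a$ and $b:=T_dT_c^{-1}(a)$ satisfy the hypotheses of Lemma \ref{lem: i=2}, and then deduce $\W(0)=[\W(0),\M]$ from a commutator identity. For the first half, by $\M$-equivariance it suffices to treat the model case $T_dT_c^{-1}=B_0$ with $c,d$ the curves in Figure \ref{fig:B0}; if the construction works there, then for a general conjugate $gB_0g^{-1}=T_dT_c^{-1}$ one transports the construction by $g$.

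I would select $a$ to be a nonseparating simple closed curve sitting inside the support of $B_0$---namely a regular neighborhood of $c\cup d$ glued to the two genus-$1$ subsurfaces $\Sigma_1,\Sigma_2$ from Step 3 of the proof of Lemma \ref{lem: i=2}---which is a genus-$2$ surface with $2$ boundary components and therefore embeds in a genus $\le 4$ subsurface with one boundary component. Because $a$ lies in this support and $B_0$ acts as the identity outside it, $b=B_0(a)$ also lies in the support, so $a\cup b$ automatically satisfies the last clause of Lemma \ref{lem: i=2}. Since $B_0\in\I\cap\Ch$, we get $[a]=[b]$ (so $b$ is nonseparating and homologous to $a$) and, by Proposition \ref{prop: g of b and Ch(b) is zero}, $g(a,b)=g(a,B_0(a))=0$ for free. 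I would choose $a$ so that $i(a,c)=i(a,d)=1$; the geometric heart of the proof is then to verify $i(a,b)=2$, which is a picture-level computation using the standard twist formulas for $T_c^{-1}$ and $T_d$ applied to $a$. Once this intersection count is in hand, Lemma \ref{lem: i=2} concludes that $T_aT_b^{-1}$ is conjugate to $B_0$.

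For the second half, starting from $T_b=(T_dT_c^{-1})T_a(T_dT_c^{-1})^{-1}$ we obtain the commutator identity
\[
T_aT_b^{-1} \;=\; T_a\bigl(T_dT_c^{-1}\bigr)T_a^{-1}\bigl(T_dT_c^{-1}\bigr)^{-1} \;=\; \bigl[T_a,\,T_dT_c^{-1}\bigr],
\]
which displays $T_aT_b^{-1}$ as a commutator of an element of $\M$ with an element of $\W(0)$, hence lying in $[\M,\W(0)]=[\W(0),\M]$. But the first half shows $T_aT_b^{-1}$ is conjugate to $B_0$ in $\M$, and since $[\W(0),\M]$ is normal in $\M$, this forces $B_0\in[\W(0),\M]$. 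Therefore $\W(0)=\langle\langle B_0\rangle\rangle\subseteq[\W(0),\M]$, and the reverse inclusion is automatic from normality of $\W(0)$.

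The main obstacle I expect is the picture-level verification that $i(a,b)=2$ for a choice of $a$ that simultaneously sits inside the low-genus support of $B_0$: both conditions must hold at once, since the intersection count triggers Lemma \ref{lem: i=2}, while the low-genus containment is exactly what upgrades the conclusion from ``conjugate to $B_{m,m}$ for some $m$'' to ``conjugate to $B_0$''. A poorly chosen $a$ easily breaks one or the other, so the care is entirely in the choice of $a$ relative to the standard picture of $B_0$.
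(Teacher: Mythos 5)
Your proposal follows essentially the same route as the paper: the paper also picks $a$ intersecting each of $c,d$ exactly once inside the standard picture of $B_0$, invokes Lemma \ref{lem: i=2} once $i(a,b)=2$ is known, and closes with the identity $T_aT_b^{-1}=[T_a,T_dT_c^{-1}]$. The one step you defer --- verifying $i(a,b)=2$ --- is where the paper does real work (a bigon count gives $i(a,b)\le 2$, and ruling out $i(a,b)=0$ requires a test curve $e$ together with the twist--intersection inequality), but you correctly isolate it as the crux and your choice of $a$ is the one that makes it go through.
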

\begin{proof}
By Lemma \ref{lem: i=2}, if $T_dT_c^{-1}$ is conjugate to $B_0$, then the curves $c$ and $d$ can be conjugated by a mapping class into a configuration as in Figure \ref{fig:abcd} below. 

    \begin{figure}[h]
        \centering        \includegraphics[width=0.6\linewidth]{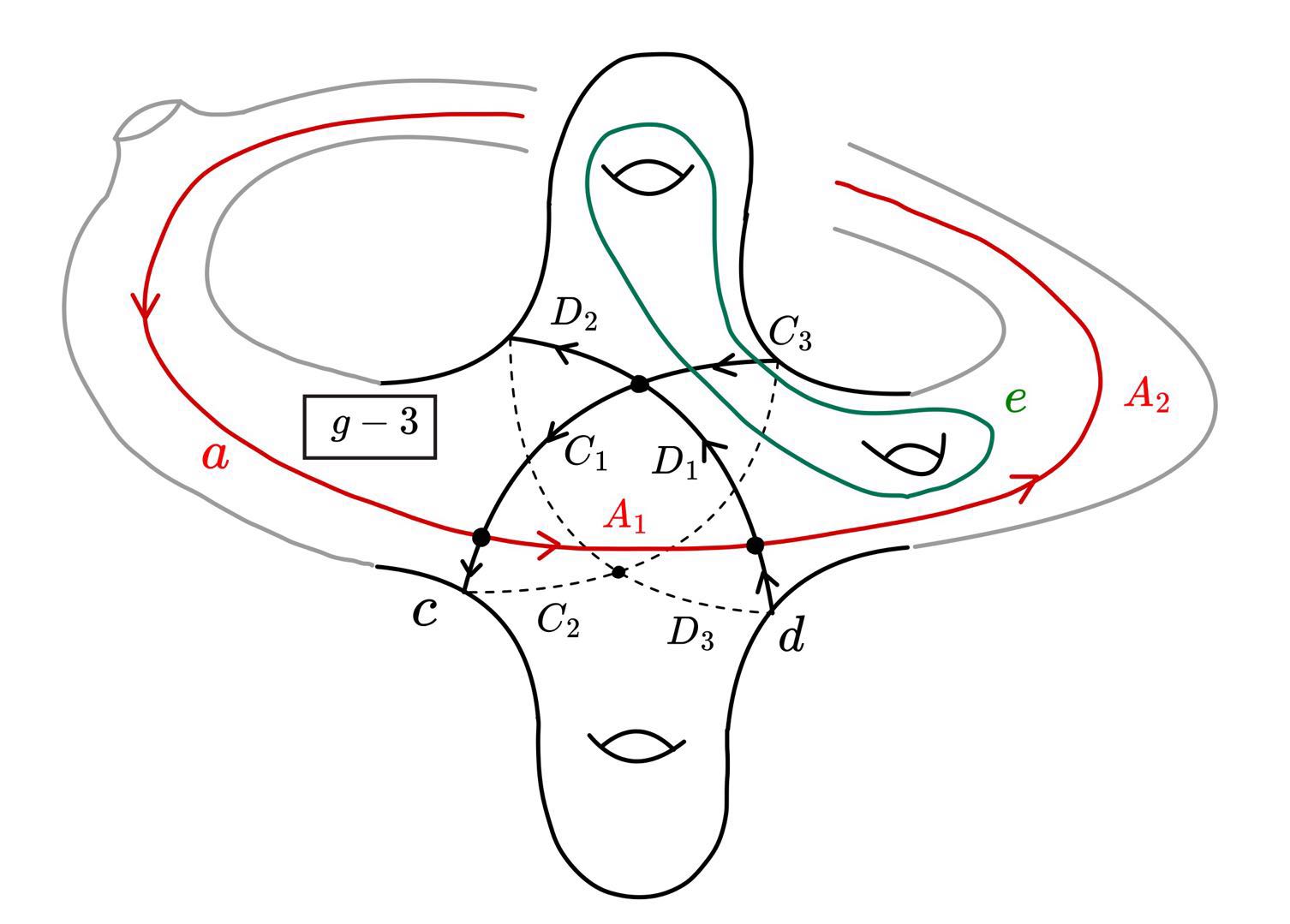}
        \caption{ }
        \label{fig:abcd}
    \end{figure}

We fix orientations on $c$ and $d$ as in the figure and let $a$ be the red curve. Let $b:=T_dT_c^{-1}(a)$. We will show that $T_aT_b^{-1}$ is conjugate to $B_0$, and consequently that 
$$T_aT_b^{-1}=[T_a, T_dT_c^{-1}]\in [\W(0),\M] \quad\Rightarrow\quad B_0\in [\W(0),\M].$$
By Lemma \ref{lem: i=2}, it suffices to show that $i(a,b)=2$.

The three curves $a,c,d$ intersect at three points, which divide $a,c,d$ into the following oriented arcs: 
$$a=A_1A_2,\ \ c=C_1C_2C_3,\ \ d=D_1D_2D_3.$$
We follow the convention that $A_1A_2$ means the curve going along $A_1$ first  and then $A_2$. We have 
$$i(a,b)=i(a,T_dT_c^{-1}(a)) = i(T_d^{-1}(a),T_c^{-1}(a)).$$
We can represent $T_d^{-1}(a),T_c^{-1}(a)$ as compositions of the following arcs: 
$$T^{-1}_d(a) = A_1D_3^{-1}D_2^{-1}D_1^{-1}A_2,\qquad T_c^{-1}(a) = A_2C_2C_3C_1A_2.$$
We calculate the number of intersection points as:
$$T_d^{-1}(a)\cap T_c^{-1}(a) = (a\cup d)\cap (a\cup c)= (a\cap c)\cup (d\cap a)\cup (d\cap c)= 4 \text{ points}.$$
We claim that exactly 2 of these 4 intersection points form a part of a bigon, and consequently $i(T_d^{-1}(a),T_c^{-1}(a))=2$. Indeed, it is straightforward to check that $T_d^{-1}(a)$ and $T_c^{-1}(a)$ form a bigon $D_1C_1A_1$. Hence $i(T_d^{-1}(a),T_c^{-1}(a))\le 4-2=2.$ We next claim that  $i(T_d^{-1}(a),T_c^{-1}(a))\ge 2.$

We will prove this statement by contradiction. The algebraic intersection numbers satisfy 
$$\ii(T_d^{-1}(a),T_c^{-1}(a))=\ii(a,T_dT_c^{-1}(a))=\ii(a,a)=0$$
because $T_dT_c^{-1}\in \I$. 
If the geometric intersection number  $i(T_d^{-1}(a),T_c^{-1}(a))< 2$, then $i(T_d^{-1}(a),T_c^{-1}(a))= 0$, or equivalently $T_d^{-1}(a),T_c^{-1}(a)$ are disjoint. Since $T_d^{-1}(a),T_c^{-1}(a)$ are also homologous, they form a genus 0 bounding pair and hence $T_d^{-1}(a)=T_c^{-1}(a)$. 

However, the curve $e$ depicted as the green curve in Figure \ref{fig:abcd} intersects $c$ at two points, but is disjoint from $a$ and $d$. Thus we have that $i(e,T_d^{-1}(a))=0$ but by \cite[Proposition 3.4]{primer}, we have
\[
|i(e,T_c^{-1}(a))-i(c,e)i(c,a)|\le i(e,a)=0,
\]
which implies 
\[
i(e,T_c^{-1}(a))=i(c,e)i(c,a)=2.
\]
This is a contradiction. The proof is complete.
\end{proof}

\subsection{\boldmath The proof of that $T_1^2\in [\Ch,\M]$}

Again, let $T_1$ denote a Dehn twist of a separating curve of genus $1$, which is unique up to conjugation in $\M$. In this subsection, we will prove the following.
\begin{lemma}\label{imd}
When $g\geq 3$, we have that 
$T_1^2\in [\Ch,\M]$ and that $d(T_2)=d(H_0)$.
\end{lemma}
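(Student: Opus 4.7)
My plan is to mimic the template of Proposition \ref{2.2}: write each of the two desired elements as a product of commutators $[\eta, f]$ with $\eta \in \Ch$ and $f \in \M$, exploiting a lantern relation. The guiding principle is that whenever two genus $1$ bounding pair maps $\alpha$ and $\beta$ are oriented so that their Chillingworth contributions cancel, their product $\alpha \beta^{-1}$ lies in $\Ch$; and by the commutator identity already used to rewrite $B_0 = [T_{d_1}, H_0]$, such a balanced product can in fact be realized as $[T_c, f]$ for a curve $c$ and an element $f \in \Ch$, placing it in $[\Ch, \M]$.

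For $T_1^2 \in [\Ch, \M]$, I would embed a $4$-holed sphere $L \subset S_g^1$ (possible since $g\ge 3$) whose boundary components include two curves $\delta_1, \delta_2$ that are isotopic \emph{in $S_g^1$} to a common genus $1$ separating curve, together with two auxiliary boundaries $\delta_3, \delta_4$. The lantern relation $T_{\delta_1}T_{\delta_2}T_{\delta_3}T_{\delta_4} = T_{x_1}T_{x_2}T_{x_3}$ then contains $T_1^2$ on the left, and rearranging produces $T_1^2$ as a product of Dehn twists on curves whose pairwise differences can be organized into balanced genus $1$ BP maps. Each such balanced factor lands in $[\Ch, \M]$ by the principle above, so $T_1^2 \in [\Ch, \M]$.

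For $d(T_2) = d(H_0)$, I would use an analogous (or the same) lantern, now embedded in a genus $2$ subsurface containing the three curves $a_1, a_2, a_3$ whose boundary carries $T_2$. The aim is to show $T_2 \cdot H_0^{-1} \in [\Ch, \M]$. Because $d : \Ch \to \Z$ is $\M$-invariant (so $d(gfg^{-1}) = d(f)$ and consequently $d$ vanishes on every commutator $[f, g]$ with $f \in \Ch$, $g \in \M$), membership of $T_2 H_0^{-1}$ in $[\Ch, \M]$ immediately yields $d(T_2) = d(H_0)$. Alternatively, one can use the $T_1^2$-identity together with Kosuge's result that $T_1 \in \ker d$ (so $d(T_1^2) = 0$) to eliminate a $T_1^{2k}$ correction term that might arise.

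The main obstacle is the explicit topological construction of the required lantern: I need a configuration that simultaneously (a) produces $T_1^2$ through a pair of boundary circles identified only after embedding in $S_g^1$, and (b) lets the remaining Dehn twists be paired into balanced genus $1$ BP maps of the form $T_{\alpha}T_{\beta}^{-1}$ with $\gamma(\alpha, \beta) = 0$. The bookkeeping of Chillingworth contributions around the lantern, and the identification of the interior curves $x_1, x_2, x_3$ with curves related to the $a_i$'s, is the technical core. Once the lantern is in place, each balanced factor is handled uniformly by the commutator trick of Proposition \ref{2.2}(2), and both conclusions of the lemma follow together.
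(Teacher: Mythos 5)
Your guiding principle is false, and this sinks the main part of the argument. A ``balanced'' product of two genus~1 bounding pair maps whose Chillingworth contributions cancel does lie in $\Ch$, but it need \emph{not} lie in $[\Ch,\M]$: the element $H_0=(T_{a_1}T_{a_2}^{-1})(T_{a_2}T_{a_3}^{-1})^{-1}$ is exactly such a balanced product, and $d(H_0)=8\neq 0$ while $d$ kills $[\Ch,\M]$, so $H_0\notin[\Ch,\M]$. The commutator trick of Proposition \ref{2.2}(2) only applies to a \emph{single} fake bounding pair map of the form $T_cT_{f(c)}^{-1}$ with $f\in\Ch$ (such as $B_0=T_{d_1}T_{d_3}^{-1}$); it does not apply to a general balanced product of four Dehn twists. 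Distinguishing these two situations is precisely what the Casson--Morita $d$-map measures, and your pairing step never checks it. There is also a structural obstruction to the single-lantern plan: a lantern with two genus~1 separating boundary curves yields $T_1^2$ as a product of an \emph{odd} number (five) of Dehn twists, so after pairing as many as possible into bounding-pair maps a separating twist ($T_2$ or $T_3$) is always left over, and that leftover is itself not in $[\Ch,\M]$. Indeed this is exactly what the paper's third lantern gives: $H_0\equiv T_1^{-2}T_2$ in $\Ch/[\Ch,\M]$, which by itself says nothing about $T_1^2$.

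The paper's actual proof is an algebraic computation in the abelianization $\Ch/[\Ch,\M]$ combining \emph{three} lantern relations --- $T_2^3\equiv T_1^3T_3$, $H_0^2\equiv T_1T_3T_2^{-1}$, and $H_0\equiv T_1^{-2}T_2$ --- so that the unavoidable leftover terms $T_2$, $T_3$, $H_0$ cancel against each other and isolate $T_1^2\equiv 0$. No single lantern suffices. Your second claim is more salvageable: $d(T_2)=d(H_0)$ does follow from the one relation $H_0\equiv T_1^{-2}T_2$ together with $d(T_1)=4\cdot 1\cdot(1-1)=0$ from Morita's formula (your fallback route), and does not require $T_1^2\in[\Ch,\M]$ at all; but your primary route via $T_2H_0^{-1}\in[\Ch,\M]$ again presupposes $T_1^2\equiv 0$, which is the hard part you have not established.
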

\begin{proof}
By the change of coordinate principle, all the Dehn twists about a genus $i$ separating curve are conjugate to each other in $\M$. Let $T_i$ be an the Dehn twist about a genus $i$ separating curve, which is a well-defined element in $\Ch/[\Ch,\M]$. 
We will show that $T_1^2\equiv0$ in $\Ch/[\Ch,\M]$. In the proof below we will use $\equiv$ to denote an equality in $\Ch/[\Ch,\M]$.

In general, if two curves have geometric intersection number 2, then a small neighborhood of their union forms a lantern. Consider the lantern formed by the curves $b$ and $c$ in Figure \ref{fig:lantern for T_1^2} below. Let $b'$ denote the third diagonal curve in the lantern, which is also a genus 2 separating curve just like $b$ and $c$. The lantern relation gives that 
\begin{equation}
    \label{eq: lantern 1}
    T_bT_{b'}T_c \equiv T_1^3T_3 \quad \Rightarrow \quad T_2^3\equiv T_1^3T_3.
\end{equation}
Notice that elements in $\Ch/[\Ch,\M]$ only depends on its conjugacy class under $\M$ and that the order of products does not matter since $\Ch/[\Ch,\M]$ is abelian. 

\begin{figure}[h]
    \centering
    \includegraphics[width=0.35\linewidth]{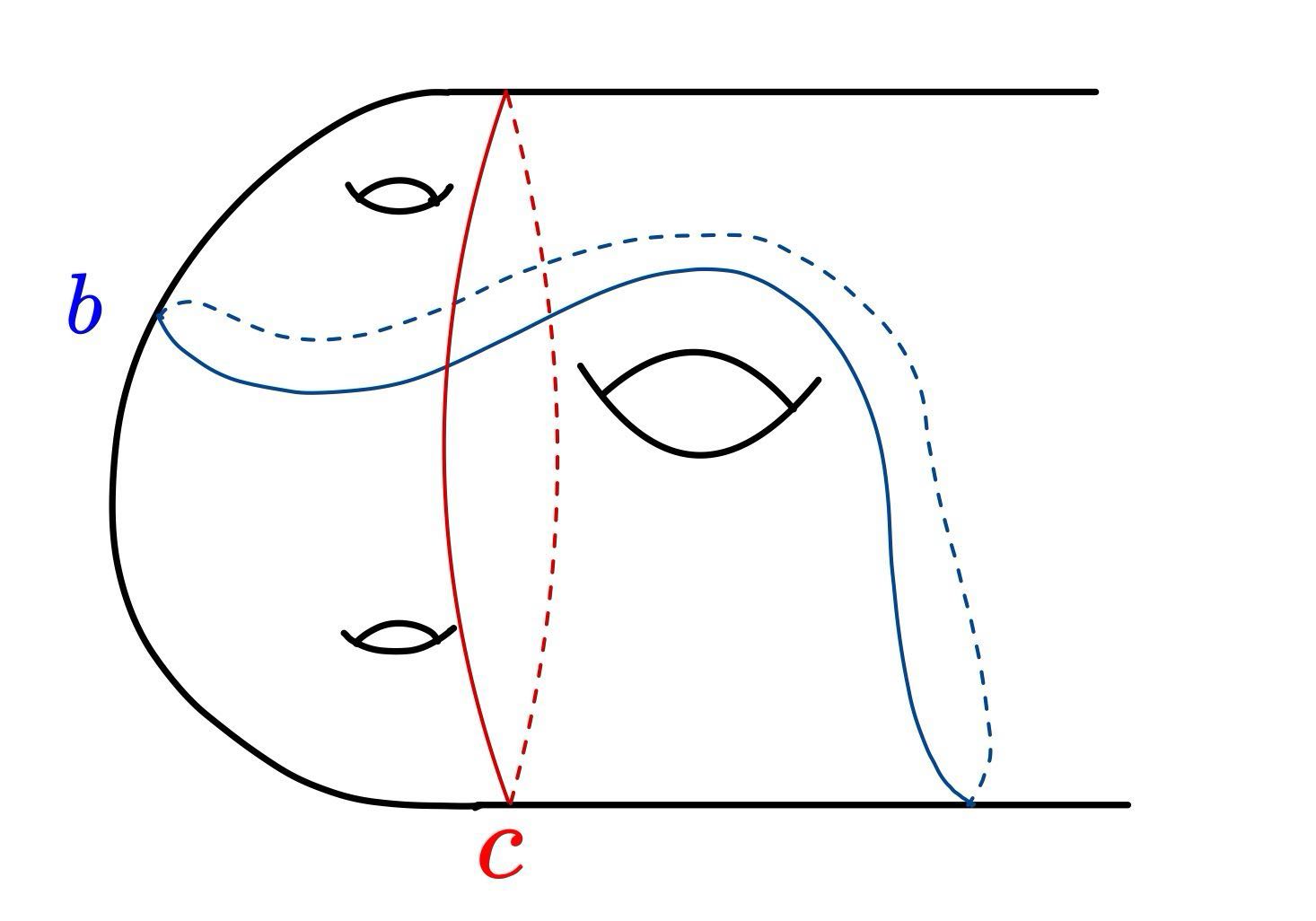}
        \caption{The lantern formed by $b$ and $c$.}
            \label{fig:lantern for T_1^2}
\end{figure}

Consider the lantern formed by the curves $a_1$ and $b$ depicted on the left side of Figure \ref{fig:two lanterns} below. Let $a_1'$ denote the third diagonal curve in this lantern. The lantern relation gives that 
\begin{equation*}
    T_{a_1}T_{a_1'}T_{b} \equiv T_{a_2} T_{a_3} T_1 T_3\quad \Rightarrow \quad (T_{a_1}T_{a_3}T_{a_2}^{-2})(T_{a_2}T_{a_1'}T_{a_3}^{-2}) \equiv T_1T_3T_2^{-1}
\end{equation*}
Moreover, the pair $(a_1',a_3)$ forms a genus 1 bounding pair and  $(a_1',a_2)$ forms a genus 2 bounding pair as we depict in the left side of Figure \ref{fig:two lanterns}. Hence, the triple of curves $(a_1,a_2,a_3)$ is conjugate to $(a_1',a_3,a_2)$ in $\M$, which implies that
$$H_0:=T_{a_1}T_{a_3}T_{a_2}^{-2}\equiv T_{a_2}T_{a_1'}T_{a_3}^{-2}.$$
Hence, the lantern relation above reduces to that 
\begin{equation}
    \label{eq: lantern 2}
    H_0^2 \equiv T_1T_3T_2^{-1}.
\end{equation}

\begin{figure}[h]
    \centering
    \includegraphics[width=0.8\linewidth]{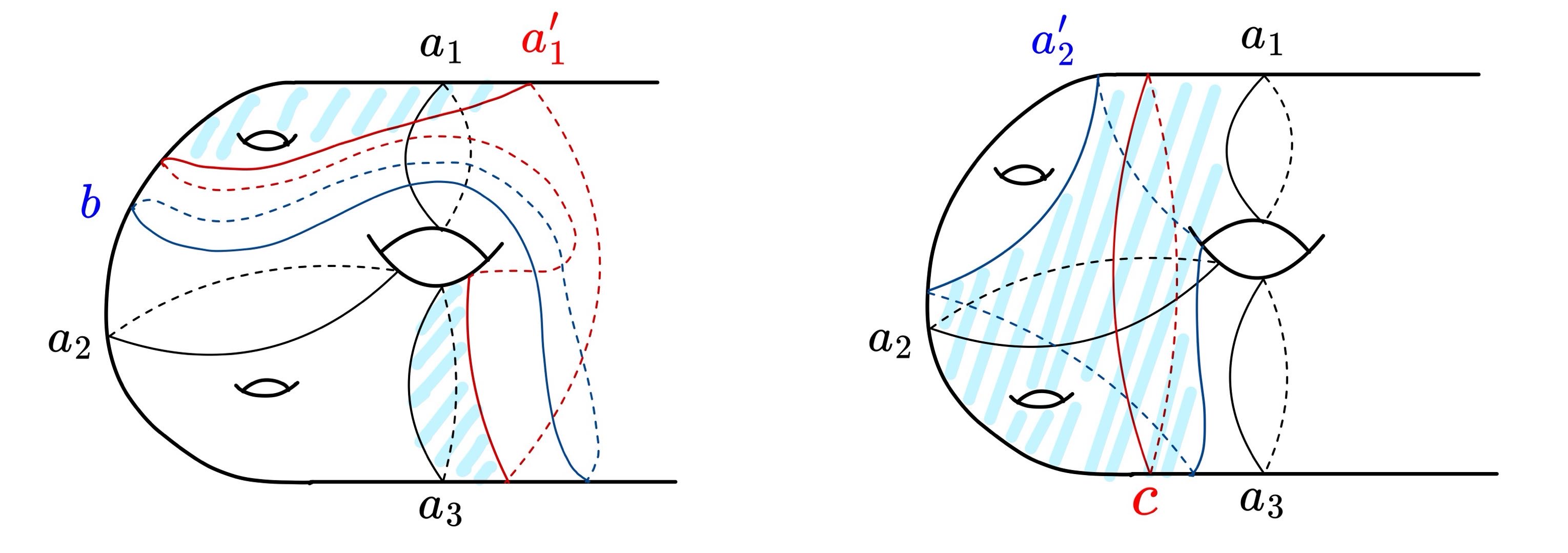}
    \caption{Left: the blue-shaded region is a genus 1 subsurface bounded by $(a_1',a_3)$. Right:  the blue-shaded region is a genus 1 subsurface bounded by $(a_1,a_2')$.}
    \label{fig:two lanterns}
\end{figure}

Consider the lantern formed by the curves $a_2$ and $c$ depicted on the right side of Figure \ref{fig:two lanterns} above. Let $a_2'$ denote the third diagonal curve in the lantern. The lantern relation gives that 
\begin{equation*}
    T_{a_2}T_{a_2'}T_{c} \equiv T_{a_1} T_{a_3} T_1^2\quad \Rightarrow \quad H_0:= T_{a_1}T_{a_3}T_{a_2}^{-2}\equiv (T_{a_2}^{-1}T_{a_2'}) T_1^{-2}T_2.
\end{equation*}
The two pairs $(a_1,a_2)$ and $(a_1,a_2')$ are both genus 1 bounding pairs. Hence, $a_2$ and $a_2'$ are homologous with $\g(a_2,a_2')=0$. By Lemma \ref{lem: i=2}, we have that 
$$T_{a_2}^{-1}T_{a_2'} \equiv B_0.$$
Moreover, we have that $B_0\equiv 0$ by Proposition \ref{2.2}. Hence, this lantern relation reduces to that 
\begin{equation}
    \label{eq: lantern 3}
    H_0 \equiv T_1^{-2}T_2.
\end{equation}

By  (\ref{eq: lantern 2}) and (\ref{eq: lantern 3}), we have that 
$$H_0^2 \equiv T_1T_3T_2^{-1}\equiv T_1^{-4}T_2^{2}.$$
This equation together with (\ref{eq: lantern 1}) implies that 
$T_1^2\equiv 0.$

Since $d$ is an $\M$-invariant homomorphism on $\Ch$ such that $d(T_n)=4n(n-1)$, by Proposition \cite[Proposition 5.1 and Theorem 5.3]{morita2} , we conclude that $d(H_0)=d(T_1^{-2}T_2)=d(T_2) = 8$ by (\ref{eq: lantern 3}).
\end{proof}





\section{Calculations about $Ch_g^1$}


    The main goal of this section is to prove that $T_1\in [\Ch,\M]$ when $g\ge 6$. Our proof is indirect and uses results in \cite{Normal} in an unexpected way. Once we finish proving that $T_1\in [\Ch,\M]$, we deduce several theorems stated in the introduction as consequences. Let us give a brief overview of this Section.    
   In 3.1, we study the algebraic intersection form as a cohomology class. In 3.2, we compute $H^1(\Ch;\Z/m\Z)^{\M}$ using results in \cite{Normal}. In 3.3, we prove that $T_1\in [\Ch,\M]$. In 3.4, we prove the right equality of Theorem \ref{main1}. In 3.5, we prove Theorem \ref{thm:homological results}.





\subsection{The cohomology class of intersection form}
\label{sec:intersection form}
To analyze the group $\Ch$, we first study the homology of the abelian group $H:=H_1(S_g^1;\Z)$ and $H_{m}:=H_1(S_g^1;\Z/m)$ for an integer $m$. Let $\ii$ denote the algebraic intersection form
$$\ii: H\times H\to \Z.$$
Then $\ii$ is a cocyle and defines a cohomology class in $H^2(H;\Z)$.

\begin{proposition}
\label{prop:i=2j}
For any $m\in \Z$, we have the following results. 
\begin{enumerate}
    \item There exists $j\in H^2(H;\Z)$ such that $\ii=2j$. Moreover, $j$ is $\M$-invariant and is a generator for
$$H^2(H;\Z)^{\M} \cong \Z.$$
\item Let $j'$ denote the image of $j$ under natural map:
\begin{align*}
    H^2(H;\Z) &\to H^2(H;\Z/m\Z)\\
    \qquad j&\mapsto j'
\end{align*}
Then we have that 
$$H^2(H;\Z/m\Z)^{\M}=\langle j'\rangle \cong \Z/m\Z 
.$$
\item Let 
$$r: H^2(H_{2m};\Z/m)\to H^2(H;\Z/m)$$
denote the map induced by the projection $H\to H_{2m}$. Then there exists an element $j''\in H^2(H_{2m};\Z/m)$ such that $r(j'')=j'$.
\end{enumerate}
\end{proposition}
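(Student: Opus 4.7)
My plan is to identify $H^*(H;\Z)$ with $\Lambda^*(H^*)$ via cup products (which is valid for the free abelian group $H \cong \Z^{2g}$) and work in that framework throughout.

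For (1), I would fix a symplectic basis $a_1, b_1, \ldots, a_g, b_g$ of $H$ with dual basis $\alpha_1, \beta_1, \ldots, \alpha_g, \beta_g \in H^1(H;\Z)$, and set $j := \sum_{i=1}^{g} \alpha_i \cup \beta_i \in H^2(H;\Z)$. Using the graded commutativity $\alpha_i \cup \beta_i = -\beta_i \cup \alpha_i$ in $H^2$, one sees that $2j$ is represented by the alternating $2$-cocycle $(x,y) \mapsto \sum_i\bigl(\alpha_i(x)\beta_i(y) - \beta_i(x)\alpha_i(y)\bigr) = \ii(x,y)$, so $\ii = 2j$ in cohomology. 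Since $\M$ acts on $H$ through $\mathrm{Sp}(2g,\Z)$ and preserves $\ii$, and since $H^2(H;\Z) \cong \Lambda^2 H^*$ is torsion-free, $\M$-invariance of $2j$ forces $\M$-invariance of $j$. That $j$ generates $H^2(H;\Z)^{\M} \cong \Z$ then follows from the first fundamental theorem of invariant theory for $\mathrm{Sp}(2g,\Z)$ acting on $\Lambda^2 H^*$: the only alternating bilinear forms on $H$ preserved by the symplectic group are integer multiples of the symplectic form, and the primitivity of $j$ in the lattice $\Lambda^2 H^*$ upgrades this to the stated generation.

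For (2), I would use the universal coefficient theorem: since $H^*(H;\Z)$ is torsion-free, $H^2(H;\Z/m) \cong \Lambda^2 H^* \otimes \Z/m \cong \Lambda^2(H_m^*)$. The analog of the first fundamental theorem over $\Z/m$ gives that the $\mathrm{Sp}(2g,\Z/m)$-invariants in $\Lambda^2(H_m^*)$ are generated by the reduction $j'$ of $j$; this can be verified directly using that symplectic transvections generate $\mathrm{Sp}(2g,\Z/m)$ together with a case-by-case analysis of their action on a basis of $\Lambda^2 H_m^*$, showing that any element fixed by all transvections must be a scalar multiple of $j'$.

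For (3), the essential observation is that $m \mid 2m$, so every homomorphism $\alpha_i\colon H \to \Z/m$ vanishes on $2mH$ and hence factors uniquely through the projection $\pi\colon H \to H_{2m} = H/2mH$, yielding lifts $\tilde\alpha_i, \tilde\beta_i \in H^1(H_{2m};\Z/m)$. Setting $j'' := \sum_i \tilde\alpha_i \cup \tilde\beta_i \in H^2(H_{2m};\Z/m)$, the naturality of cup product under $\pi^*$ gives
\[
r(j'') = \sum_i \pi^*(\tilde\alpha_i) \cup \pi^*(\tilde\beta_i) = \sum_i \alpha_i \cup \beta_i = j',
\]
as required. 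The hardest step will be the invariant computation in (2): over $\Q$ the primitive complement of $j$ in $\Lambda^2 H^*$ is an irreducible $\mathrm{Sp}$-representation with no invariants, but verifying the analog over $\Z/m$, especially for even or composite $m$ where semisimplicity may fail, requires a hands-on orbit analysis rather than an appeal to classical decompositions.
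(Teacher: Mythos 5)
Your proposal is correct and, for parts (1) and (2), runs essentially parallel to the paper: the paper's explicit cochain $j(x,y)=\sum_l \lambda_l\mu_l'$ is exactly a cocycle representative of your $\sum_i \alpha_i\cup\beta_i$, and both arguments deduce $\M$-invariance of $j$ from invariance of $\ii=2j$ plus torsion-freeness of $H^2(H;\Z)\cong\wedge^2H^1(H;\Z)$, then identify the invariants with $\Z\cdot\bigl(\sum_i a_i^*\wedge b_i^*\bigr)$. For part (2) you are actually more careful than the paper, which dismisses it with ``follows from the universal coefficient theorem'': as you note, UCT only gives $H^2(H;\Z/m\Z)\cong\wedge^2H^1(H;\Z)\otimes\Z/m\Z$, and one still must check that every mod-$m$ invariant is a multiple of $j'$ (invariants need not commute with reduction of coefficients); your proposed transvection/orbit analysis is the right way to close this, but it is left as a sketch, so you should either carry it out or cite the standard computation of $(\wedge^2 H_m)^{\mathrm{Sp}}$. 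Where you genuinely diverge is part (3): the paper verifies by hand that the cochain $j$ descends to $H_{2m}\times H_{2m}\to\Z/m\Z$ and then spends considerable effort checking $\M$-invariance of the resulting class $j''$ on the Humphries generators via explicit coboundaries $q_1^k$, $q_2$ (this invariance is extra information the paper wants for later use, not part of the stated claim). Your argument --- factoring each $\alpha_i,\beta_i\colon H\to\Z/m\Z$ through $H_{2m}$ and invoking naturality of the cup product --- is shorter, cleaner, and fully proves the statement as written; its only cost is that it does not by itself address the $\M$-invariance of $j''$ that the paper's computation additionally supplies.
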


\begin{proof}
Let $\{a_l,b_l\}_{1\le l\le g}$ denote a symplectic basis for $H\cong \Z^{2g}$. We will consider the intersection pairing $\ii$ not just as a cohomology class, but also as an explicit 2-cochain $\ii:H\times H\to \Z$ with the following formula:
\[
\ii(\sum_{l=1}^g (\lambda_la_l+\mu_lb_l), \sum_{l=1}^g ( \lambda_l'a_l+\mu_l'b_l))=\sum_{l=1}^g (\lambda_l\mu_l' - \lambda_l'\mu_l).\]
We will modify $\ii$ by adding a coboundary.  Define $f:H\to \Z$ by 
\[
f(\sum_{l=1}^g  \lambda_la_l+\mu_lb_l)= \sum_{l=1}^g  \lambda_l\mu_l 
\]
We then have that
\begin{align*}
(\ii+\delta f)&\Big(\sum_{l=1}^g (\lambda_la_l+\mu_lb_l), \sum_{l=1}^g ( \lambda_l'a_l+\mu_i'b_l)\Big)\\
    =&\sum_{l=1}^g \lambda_l\mu_l' - \lambda_l'\mu_l + (\lambda_l+\lambda_l')(\mu_l+\mu_l')-\lambda_l\mu_l-\lambda_l'\mu_l' \\
    =&2 \sum_{l=1}^g \lambda_l\mu_l'\\    
\end{align*}
The computation above shows that $\ii+\delta f$ is always a multiple of $2$. Define \[
j:=\frac{\ii+\delta f}{2}.\]
In other word, we define a 2-cochain
\begin{equation}
    \label{eq: j}
    j\Big(\sum_{l=1}^g (\lambda_la_l+\mu_lb_l), \sum_{l=1}^g ( \lambda_l'a_l+\mu_l'b_l)\Big)=\sum_{l=1}^g \lambda_l\mu_l'
\end{equation}
Since $2j-\ii=\delta f$, we have that $2[j]=[\ii]$ as cohomology classes in $H^2(H;\Z)$. Since $[\ii]$ is $\M$-invariant, we have that $[j]$ is also $\M$-invariant because $H^2(H;\Z)$ is torsion-free. 

Next we show that $[j]$ is a generator for $H^2(H;\Z)^\M$.  
Consider the dual basis $\{a_k^*,b_k^*\}_{1\le k\le g}$ for $H^1(H,\Z)$. We have that 
$$(\wedge^2 H^1(H;\Z))^\M\cong \Z$$
where a generator is given by the class
$$\sum_{k=1}^g (a_k^*\wedge b_k^*).$$
The cup product on $H^*(H;\Z)$ gives an $\M$-equivariant isomorphism 
\begin{equation}
    \label{wedge 2 iso}
    \wedge^2 H^1(H;\Z)\xrightarrow{\smallsmile} H^2(H;\Z).
\end{equation}
This map takes the class $\sum_{k=1}^g (a_k^*\wedge b_k^*)$ to $[j]$ by (\ref{eq: j}). Hence, $[j]$  generates $H^2(H;\Z)^\M\cong \Z$. Part (1) is established. 

Part (2) follows from the universal coefficient theorem. Notice that $H^t(H;\Z)$ is torsion-free for all $t$. 

To prove (3), we want to find a class $j''\in H^2(H_{2m};\Z/m\Z)$ such that $r(j'')=j'$. 
It follows from (\ref{eq: j}) that
\[
j(x+2mz,y)=j(x,y) \mod 2m,\qquad \ \ \forall x,y,z\in H.
\]
Thus $j:H\times H\to\Z$ descends to a well-defined map $j'':H_{2m}\times H_{2m}\to\Z/m$, which restricts to $j':H\times H\to\Z/m\Z$.
 
Finally, to show that $j''$ is invariant under the action of $\M$, we check it on the generators of $\M$. The Humphries generating set of $\M$ consists of  Dehn twists $T_{a_k},T_{b_k}$  for $k\in \{1,2,...,g\}$ and $T_{a_1+a_2}$. We calculate:
\begin{align*}
(j^{T_{a_k}}-j)&(\sum_{l=1}^g (\lambda_la_l+\mu_lb_l), \sum_{l=1}^g ( \lambda_l'a_l+\mu_i'b_l)) \\
=&j(T_{a_k}(\sum_{l=1}^g (\lambda_la_l+\mu_lb_l)), T_{a_k}(\sum_{l=1}^g ( \lambda_l'a_l+\mu_i'b_l)))\\&-j(\sum_{l=1}^g (\lambda_la_l+\mu_lb_l), \sum_{l=1}^g ( \lambda_l'a_l+\mu_i'b_l))\\
=&j(\sum_{l=1}^g (\lambda_la_l+\mu_lb_l)-\mu_ka_k, \sum_{l=1}^g ( \lambda_l'a_l+\mu_i'b_l)-\mu_k'a_k) \\&-j(\sum_{l=1}^g (\lambda_la_l+\mu_lb_l), \sum_{l=1}^g ( \lambda_l'a_l+\mu_i'b_l))\\
=&\sum_{l=1}^g \lambda_l\mu_l' -\lambda_k\mu_k'+(\lambda_k-\mu_k)\mu_k'-\sum_{l=1}^g \lambda_l\mu_l' \\
=&-\mu_k\mu_k'
\end{align*}
Define $q_1^k:H_{2m}\to \Z/m$ by
\[
q_1^k(\sum_{l=1}^g (\lambda_la_l+\mu_lb_l))=\mu_k(\mu_k+1)/2   \pmod m.
\]
Observe that $q_1^k$ only depends on $\mu_k \pmod {2m}$ and hence is well-defined.
\begin{align*}
\delta &q_1^k(\sum_{l=1}^g (\lambda_la_l+\mu_lb_l), \sum_{l=1}^g ( \lambda_l'a_l+\mu_i'b_l))   \\
=&q_1^k(\sum_{l=1}^g  (\lambda_l+\lambda_l')a_l+(\mu_l+\mu_l')b_l)-q_1^k(\sum_{l=1}^g  \lambda_la_l+\mu_lb_l)-q_1^k(\sum_{l=1}^g  \lambda_l'a_l+\mu_l'b_l))
 \\
=&(\mu_k+\mu_k')(\mu_k+\mu_k'-1)/2-\mu_k(\mu_k+1)/2+\mu_k'(\mu_k'+1)/2\\
=&\mu_k\mu_k' 
\end{align*}
We thus have $\delta q_1^k=-(j^{T_{a_k}}-j)$ and hence $[j^{T_{a_k}}]=[j]$. We remark that from here, we can see that $j \pmod {2m}$ may not be $\M$-invariant. At least our proof does not work for this stronger claim because the cochain $q_1^k$ does not decend to a well-defined map $H_{2m}\to\Z/2m\Z$.

We now discuss another generator $T_{a_1+a_2}$, we have

\begin{align*}
(j^{T_{a_1+a_2}}-j)&(\sum_{l=1}^g (\lambda_la_l+\mu_lb_l), \sum_{l=1}^g ( \lambda_l'a_l+\mu_l'b_l)) \\
=&j(T_{a_1+a_2}(\sum_{l=1}^g (\lambda_la_l+\mu_lb_l)), T_{a_1+a_2}(\sum_{l=1}^g ( \lambda_l'a_l+\mu_l'b_l)))\\&-j(\sum_{l=1}^g (\lambda_la_l+\mu_lb_l), \sum_{l=1}^g ( \lambda_l'a_l+\mu_l'b_l))\\
=&j(\sum_{l=1}^g (\lambda_la_l+\mu_lb_l)-(\mu_1+\mu_2)(a_1+a_2), \sum_{l=1}^g ( \lambda_l'a_l+\mu_l'b_l)-(\mu_1'+\mu_2')(a_1+a_2))\\&-j(\sum_{l=1}^g (\lambda_la_l+\mu_lb_l), \sum_{l=1}^g ( \lambda_l'a_l+\mu_l'b_l))\\
=&\sum_{l=1}^g \lambda_l\mu_l' -\lambda_1\mu_1'-\lambda_2\mu_2'+(\lambda_1-\mu_1-\mu_2)\mu_1'+(\lambda_2-\mu_1-\mu_2)\mu_2'-\sum_{l=1}^g \lambda_l\mu_l' \\
=&-(\mu_1+\mu_2)(\mu_1'+\mu_2')
\end{align*}

Define $q_2:H_{2m}\to \Z/m$ by
\[
q_2(\sum_{l=1}^g \lambda_la_l+\mu_lb_l)=\mu_1\mu_2
\]
We calculate that 
\begin{equation}
\begin{aligned}
\delta &q_2(\sum_l  \lambda_la_l+\mu_lb_l, \sum_l  \lambda_l'a_l+\mu_l'b_l))   \\
=&q_2(\sum_l  (\lambda_l+\lambda_l')a_l+(\mu_l+\mu_l')b_l)-q(\sum_l  \lambda_la_l+\mu_lb_l)-q(\sum_l  \lambda_l'a_l+\mu_l'b_l))
 \\
=&(\mu_1+\mu_1')(\mu_2+\mu_2')-\mu_1\mu_2-\mu_1'\mu_2' = \mu_1\mu_2'+\mu_1'\mu_2\\
\end{aligned}
\end{equation}
Thus we have
\[
\delta(q_1^1+q_1^2+q_2)=-(j^{T_{a_1+a_2}}-j)
\]
Our calculations above show that $[j]$ is $\M$-invariant.
\end{proof}

The Chillingworth homomorphism 
$\ch:\I\to H$
has $\ker(\ch) = \Ch$ and $\im(\ch) = 2H.$ Therefore, we have a short exact sequence of groups
\begin{equation}
    \label{eq: SES Ch I H}
    0\to \Ch\to\I\xrightarrow{\ch/2} H\to 0
\end{equation}
which induces the following five-term exact sequence:
\begin{equation}
    \label{eq: five over Z}
    0\to H^1(H;\Z)\to H^1(\I;\Z)\to H^1(\Ch;\Z)^H\xrightarrow{
    \delta} H^2(H;\Z)\to H^2(\I;\Z).
\end{equation}
Let $d:\Ch\to\Z$ denote the restriction of the Casson-Morita's $d$ map to the Chillingworth subgroup. By \cite[Proposition 19 and Proposition 20]{Chilling}]), its restriction to $\Ch$ is a $\M$-invariant group homomorphism with image $d(\Ch)=8\Z$. Thus $d/8$ is an $\M$-invariant homomorphism on $\Ch$. We have the following.
\begin{proposition}
    \label{prop: delta(d)}
    The connecting homomorphism $\delta:H^1(\Ch;\Z)^H\to H^2(H;\Z)$ in (\ref{eq: five over Z}) maps $d/8$ to $j$. 
\end{proposition}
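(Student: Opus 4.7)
The plan is to explicitly compute the cocycle representing $\delta(d/8)$ and compare it to $j$. Choose a set-theoretic section $s\colon H\to\I$ of $\ch/2\colon\I\to H$ and define $c(x,y):=s(x)s(y)s(x+y)^{-1}\in\Ch$. Then $\delta(d/8)$ is represented by the 2-cocycle $(x,y)\mapsto d(c(x,y))/8$, and the resulting cohomology class is independent of $s$.

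By Proposition~\ref{prop:i=2j}(1), $\delta(d/8)\in H^2(H;\Z)^{\M}=\Z\cdot[j]$, so $\delta(d/8)=k[j]$ for a unique integer $k$, which I would pin down by pairing with the $2$-cycle $a_1\wedge b_1\in H_2(H;\Z)$. On the $j$-side, the pairing gives $j(a_1,b_1)-j(b_1,a_1)=\ii(a_1,b_1)=1$. On the $\delta(d/8)$-side, a direct manipulation yields $c(x,y)\,c(y,x)^{-1}=[s(x),s(y)]$ in $\Ch$, and hence
\[
\delta(d/8)(x,y)-\delta(d/8)(y,x)=d\bigl([s(x),s(y)]\bigr)/8,
\]
so evaluating at $(a_1,b_1)$ gives $k=d([\alpha,\beta])/8$ for any lifts $\alpha,\beta\in\I$ with $\ch(\alpha)/2=a_1$ and $\ch(\beta)/2=b_1$.

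The next step uses bilinearity and $\M$-invariance to reduce the whole problem to one numerical check. Because $d$ is an $\M$-invariant homomorphism on $\Ch$, and because $d(\gamma\beta\gamma^{-1}\beta^{-1})=d(\gamma)+d(\gamma^{-1})=0$ for any $\gamma\in\Ch$, $\beta\in\I$, the commutator identity $[\alpha\alpha',\beta]=\alpha[\alpha',\beta]\alpha^{-1}\cdot[\alpha,\beta]$ yields $d([\alpha\alpha',\beta])=d([\alpha,\beta])+d([\alpha',\beta])$, and symmetrically in $\beta$. Together with antisymmetry, $(\alpha,\beta)\mapsto d([\alpha,\beta])$ descends through $\ch/2$ to an antisymmetric, $\M$-invariant, $\Z$-bilinear form on $H\times H$, and any such form is an integer multiple of $\ii$. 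So it suffices to verify $d([\alpha,\beta])=8$ for a single pair with $\ii(\ch(\alpha)/2,\ch(\beta)/2)=1$.

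For the normalization I would take $\alpha=T_xT_{x'}^{-1}$ and $\beta=T_yT_{y'}^{-1}$ as genus 1 bounding pair maps in a standard configuration such that $\ch(\alpha)/2=a_1$ and $\ch(\beta)/2=b_1$ form a symplectic pair; then use a lantern-style rearrangement in the spirit of Section~2.5 to rewrite $[\alpha,\beta]$ modulo $[\Ch,\M]$ as an explicit product of separating Dehn twists $T_n$, and apply $d(T_n)=4n(n-1)$ from \cite{morita2} to read off $d([\alpha,\beta])=8$. The main obstacle is precisely this final topological bookkeeping: choosing the curves so that the commutator collapses under iterated lantern relations and confirming that the tally gives exactly $8$ rather than a spurious multiple. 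A safety net is Morita's coboundary formula expressing $d(fg)-d(f)-d(g)$ as an explicit bilinear pairing of $\tau(f)$ and $\tau(g)$, which upon specialization to genus 1 bounding pair maps pins down the constant directly.
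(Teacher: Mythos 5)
Your argument is correct in outline but takes a much longer road than the paper, and the one step you leave open is exactly where the paper's single tool does all the work. The paper's proof is a direct computation: it takes the standard cocycle representative $\delta(d)(c_1,c_2)=d(\tilde c_1\tilde c_2)-d(\tilde c_1)-d(\tilde c_2)$ for lifts $\tilde c_i\in\I$ with $\ch(\tilde c_i)=2c_i$, applies Morita's identity $d(\phi\psi)=d(\phi)+d(\psi)+\ii(\ch(\phi),\ch(\psi))$ to get $\delta(d)(c_1,c_2)=\ii(2c_1,2c_2)=4\ii(c_1,c_2)$, hence $\delta(d)=4\ii=8j$ in $H^2(H;\Z)$, and divides by $8$ using torsion-freeness. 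Your route instead invokes $\M$-invariance to write $\delta(d/8)=k[j]$, antisymmetrizes to reduce $k$ to $d([\alpha,\beta])/8$ for lifts of a symplectic pair, and shows $(\alpha,\beta)\mapsto d([\alpha,\beta])$ descends to an invariant alternating form on $H$, hence a multiple of $\ii$. Those reductions are all valid: $c(x,y)c(y,x)^{-1}=[s(x),s(y)]$ checks out, the bilinearity follows from $[\alpha\alpha',\beta]=\alpha[\alpha',\beta]\alpha^{-1}[\alpha,\beta]$ together with conjugation-invariance of $d$ on $\Ch$ (which contains $[\I,\I]$) and $d([\gamma,\beta])=0$ for $\gamma\in\Ch$, and the invariant alternating forms are indeed $\Z\cdot\ii$ (this is essentially Proposition \ref{prop:i=2j}(1)). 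However, the normalization you designate as the main route --- collapsing $[\alpha,\beta]$ via iterated lantern relations into separating twists and tallying $d(T_n)=4n(n-1)$ --- is not carried out, and as written it is the gap in your proof. Your ``safety net'' closes it instantly and is in fact the paper's entire argument in disguise: Morita's coboundary formula gives $d([\alpha,\beta])=2\,\ii(\ch(\alpha),\ch(\beta))=2\,\ii(2a_1,2b_1)=8$, so $k=1$. Once you permit yourself that formula, the invariance-theoretic scaffolding buys nothing --- you may as well compute the connecting cocycle directly, as the paper does.
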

\begin{proof}
In general, for $A$ an abelian group and for a short exact sequence of groups
\[
0\to N\to G\to Q\to 0,\]
the connecting homomorphism $\delta: H^1(N;A)^Q\to H^2(Q;A)$ can be defined as the following: Let $f: N\to A$ be a $Q$-invariant group homomorphism. Take any extension $f': G\to A$ of $f$ to $G$. Define $\delta (f):Q\times Q\to A$ by
\[\delta(f)(q_1,q_2)=f'(\tilde{q}_1\tilde{q}_2)-f'(\tilde{q}_1)-f'(\tilde{q}_2)\]  
where $\tilde{q}_i$ is any lift of $q_i$ to $G$. Then $\delta(f)$ is a 2-cocyle representing the cohomology class $\delta[f]$.  See \emph{e.g.} Proposition (1.6.6) and Theorem (2.4.3) in \cite{neukirch2013cohomology} for a proof of this general fact. 

Let us apply this general result to short exact sequence (\ref{eq: SES Ch I H}). Recall that the homomorphism $d:\Ch\to \Z$ is a restriction of a map $d:\I\to\Z$ such that for any two elements $\phi,\psi \in \I$, we have by Morita \cite[Proposition 5.1]{morita2} that
$$d ( \phi \psi ) = d ( \phi ) + d ( \psi ) +\ii(\ch(\phi),\ch(\psi )).$$
Take any elements $c_1,c_2\in H$. Let $\tilde{c}_i\in \I$ be their lifts such that $e(\tilde{c}_i)=2c_i$ for $i=1,2$. Then we have that 
$$\delta(d)(c_1,c_2) = d (\tilde{c}_1\tilde{c}_2)-d (\tilde{c}_1) - d (\tilde{c}_2)  = \ii(\ch(\tilde{c}_1),\ch(\tilde{c}_2)) = 4\ii(c_1,c_2).$$
Therefore, we have that 
$\delta(d)=4\ii = 8j$ as elements in $H^2(H;\Z)$ where $j$ is the generator specified Proposition \ref{prop:i=2j}. Since $H^2(H;\Z)$ is torsion free, we have that 
$$\delta(d/8) = j.$$
\end{proof}

\subsection{\boldmath The computation of $H^1(\Ch;\Z/m\Z)^{\M}$}
\begin{theorem}
\label{thm:Abmodm}
Let $m$ be a positive integer such that $2m\leq g-2$. We have that
\[H^1(\Ch;\Z/m\Z)^{\M}\cong \Z/m\Z,
\]
which is generated by $d/8 \pmod m$.
\end{theorem}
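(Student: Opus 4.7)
The plan is to apply the Hochschild--Serre five-term exact sequence to
\[0\to\Ch\to\I\xrightarrow{\ch/2}H\to 0\]
with $\Z/m\Z$ coefficients and then take $\M$-invariants, a left-exact operation. This yields
\[0\to H^1(H;\Z/m\Z)^{\M}\to H^1(\I;\Z/m\Z)^{\M}\to H^1(\Ch;\Z/m\Z)^{\M}\xrightarrow{\delta}H^2(H;\Z/m\Z)^{\M}.\]
Since $\Sp$ acts transitively on primitive vectors of $H$, the first term vanishes, and by Proposition \ref{prop:i=2j}(2) the last term is $\langle j'\rangle\cong\Z/m\Z$. The $\M$-invariant homomorphism $d/8:\Ch\to\Z$ is well defined because $d(\Ch)=8\Z$, so reducing mod $m$ gives a class $[d/8\bmod m]\in H^1(\Ch;\Z/m\Z)^{\M}$. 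By Proposition \ref{prop: delta(d)} together with naturality of $\delta$ in the coefficient reduction $\Z\to\Z/m\Z$, we have $\delta([d/8\bmod m])=j'$. Hence $\delta$ is surjective and $[d/8\bmod m]$ has order at least $m$ in $H^1(\Ch;\Z/m\Z)^{\M}$.

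It remains to show $\delta$ is injective, equivalently that every $\M$-invariant homomorphism $\phi:\I\to\Z/m\Z$ restricts to zero on $\Ch$. Under the hypothesis $2m\le g-2$, \cite[Theorem 1.1]{Normal} applied with $n=2m$ gives $[\Ch[4m],\M]=\ker(d_{8m})$. A short computation using Morita's cocycle identity $d(\phi\psi)=d(\phi)+d(\psi)+\ii(\ch\phi,\ch\psi)$ shows that on $\Ch[4m]\times\Ch[4m]$ the correction term lies in $16m^2\Z\subseteq 8m\Z$, so $d_{8m}$ is a bona fide group homomorphism, and combining this with $d(\Ch)=8\Z$ and the fact that $d$ of a genus-$2m$ bounding pair map also lies in $8\Z$ one deduces that $\im(d_{8m})$ is exactly the cyclic subgroup $\langle 8\bmod 8m\rangle\cong\Z/m\Z$. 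Hence $\Ch[4m]/[\Ch[4m],\M]\cong\Z/m\Z$, and dividing $d_{8m}$ by $8$ produces an $\M$-invariant homomorphism $\widetilde{d/8}:\Ch[4m]\to\Z/m\Z$ that extends $d/8\bmod m$ and generates $\mathrm{Hom}_{\M}(\Ch[4m],\Z/m\Z)$.

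Applying this to $\phi$, we have $\phi|_{\Ch[4m]}=c\cdot\widetilde{d/8}$ for some $c\in\Z/m\Z$, hence $\phi|_{\Ch}=c\cdot(d/8\bmod m)$. Since $\phi$ is globally defined on $\I$, the class $\phi|_{\Ch}$ lies in the image of $H^1(\I;\Z/m\Z)^{\M}\to H^1(\Ch;\Z/m\Z)^{\M}$, which by exactness equals $\ker(\delta)$. Thus $\delta(\phi|_{\Ch})=c\cdot j'=0$ in $\Z/m\Z$, forcing $c=0$ and $\phi|_{\Ch}=0$, completing the proof. The main obstacle is the second paragraph, specifically the pinning down of $\im(d_{8m})$ and the extension of $d/8\bmod m$ from $\Ch$ to $\Ch[4m]$; this is where the hypothesis $2m\le g-2$ enters via the Normal paper's structural description of $\Ch[4m]^{\mathrm{ab},\M}$, and once that step is in place the rest is a diagram chase.
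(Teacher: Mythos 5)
Your first paragraph and the inputs you import from \cite{Normal} in the second are all consistent with the paper: $\delta(d/8\bmod m)=j'$ is Proposition \ref{prop: delta(d)} reduced mod $m$, and the identification of $\Ch[4m]/[\Ch[4m],\M]$ with $\Z/m\Z$ via $d_{8m}$ is \cite[Theorem 1.4]{Normal}. The fatal problem is your very first display. Taking $\M$-invariants is left exact only for \emph{short} exact sequences; applied termwise to the four-term exact sequence
\[
0\to H^1(H)\to H^1(\I)\to H^1(\Ch)^H\xrightarrow{\ \delta\ }H^2(H)
\]
(coefficients $\Z/m\Z$), it does not preserve exactness at the third term. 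Writing $P=\im\bigl(H^1(\I)\to H^1(\Ch)^H\bigr)=\ker\delta$, one has $\ker\bigl(\delta|_{H^1(\Ch)^\M}\bigr)=P^\M$, whereas the image of $H^1(\I)^\M$ is only $\ker\bigl(P^\M\to H^1(\M;H^1(H;\Z/m\Z))\bigr)$; the obstruction group $H^1(\M;H^1(H;\Z/m\Z))\cong H^1(\M;H\otimes\Z/m\Z)$ need not vanish (indeed $H^1(\M;H)\cong\Z$ by Morita). Concretely, an $\M$-invariant $\psi$ on $\Ch$ with $\delta(\psi)=0$ extends to \emph{some} homomorphism $\I\to\Z/m\Z$, but not necessarily to an $\M$-invariant one, and your injectivity argument only rules out restrictions of $\M$-invariant homomorphisms on $\I$. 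This is also why the claimed ``equivalence'' opening your second paragraph is not an equivalence: what must be shown is that every homomorphism $\I\to\Z/m\Z$ whose restriction to $\Ch$ happens to be $\M$-invariant restricts to zero, and for such a $\phi$ the restriction $\phi|_{\Ch[4m]}$ need not be $\M$-invariant, so you cannot write it as $c\cdot\widetilde{d/8}$.

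This uncontrolled kernel $P^\M$ is precisely the difficulty the paper's proof is built around. Rather than working with the single extension $\Ch\le\I$, it compares the five-term sequences of $0\to\Ch\to\I\to H\to0$ and $0\to\Ch[4m]\to\I\to H_{2m}\to0$: both rows have the same subgroup $P^\M$ and the same obstruction term $H^1(\M;P)$, so the known computation $H^1(\Ch[4m];\Z/m\Z)^\M=\langle\phi\rangle\cong\Z/m\Z$ is transported to $\Ch$ by showing the restriction map $R\colon H^1(\Ch[4m];\Z/m\Z)^\M\to H^1(\Ch;\Z/m\Z)^\M$ is an isomorphism (surjectivity via a diagram chase using that $j'$ lifts to $j''\in H^2(H_{2m};\Z/m\Z)$, i.e.\ Proposition \ref{prop:i=2j}(3); injectivity because $d/8\bmod m$ is onto $\Z/m\Z$). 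To repair your argument you would either need to prove $P^\M=0$ directly or adopt some such comparison; as written, the step ``hence $\delta$ is injective'' does not follow.
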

\begin{proof}
\noindent\textbf{Step 1: relate $\Ch$ and $\mathbf{\Ch[4m]}$ by exact sequences.}
Consider the Chillingworth congruence subgroup 
$$\Ch[4m]:=\{f\in\I\ : \ \ch(f)=0\pmod{4m}\}.$$
Then 
$\Ch$ and $\Ch[4m]$ fit into a commutative diagram of short exact sequences where the first row is exactly (\ref{eq: SES Ch I H}) above:
\begin{center}
\begin{tikzcd}
	0 & \Ch & \I & {} & H & 0 \\
	0 & {\Ch[4m]} & \I & {} & {H_{2m}} & 0
	\arrow[from=1-1, to=1-2]
	\arrow[from=1-2, to=1-3]
	\arrow[hook, from=1-2, to=2-2]
	\arrow["\ch/2", from=1-3, to=1-5]
	\arrow["{=}", from=1-3, to=2-3]
	\arrow[from=1-5, to=1-6]
	\arrow[two heads, from=1-5, to=2-5]
	\arrow[from=2-1, to=2-2]
	\arrow[from=2-2, to=2-3]
	\arrow["{\ch/2\pmod {2m}}", from=2-3, to=2-5]
	\arrow[from=2-5, to=2-6]
\end{tikzcd}
\end{center}
The two horizontal short exact sequences of groups induce two five term long exact sequences, which fit into the following commutative diagram. Here the cohomology are taken with coefficients in $\Z/m\Z$ which we suppress from notation.
\begin{equation}
\xymatrix{
H^1(H_{2m}) \ar[r]^{\  (e/2)^*}\ar[d]^\cong & H^1(\I)  \ar[r]^{}\ar[d]^{=}& H^1(\Ch[4m])^{H_{2m}}  \ar[r]^{\ \ \ \ \delta''}\ar[d]^R&  H^2(H_{2m})   \ar[r]\ar[d]^r & H^2(\I) \ar[d]^{=}  \\
H^1(H) \ar[r]^{(e/2)^*} & H^1(\I)  \ar[r]^{}& H^1(\Ch)^{H}  \ar[r]^{\delta^\prime}& H^2(H) \ar[r] & H^2(\I) }              
\label{eq:diagram1}
\end{equation}
The leftmost vertical map is an isomorphism because any homomorphism $H\to \Z/m\Z$ necessarily factors through a map $H_{2m}=H/2mH\to\Z/m\Z$. Define $P:=\mathrm{coker} (e/2^*)$. Then the long exact sequences in (\ref{eq:diagram1}) become the following short exact sequences: (the coefficients $\Z/m\Z$ are still suppressed from notation)
\begin{equation}
\label{eq:diag2}
\xymatrix{
0\ar[r] & P  \ar[r]^{}\ar[d]^{=}& H^1(\Ch[4m])^{H_{2m}}  \ar[r]^{\ \ \ \ \ \  \delta''}\ar[d]^R&  \im(\delta'') \ar[r] \ar[d]^r & 0  \\
0\ar[r] & P  \ar[r]^{}& H^1(\Ch)^{H}  \ar[r]^{\delta'} & \im(\delta') \ar[r] & 0
}              
\end{equation}
All modules in the diagram above have natural actions of the mapping class group $\M$ such that all maps are $\M$-equivariant. Therefore, by taking $\M$-invariants of (\ref{eq:diag2}), we have the following long exact sequences:
\begin{equation}
\label{eq:diag3}
\xymatrix{
0\ar[r] & P^\M  \ar[r]^{}\ar[d]^{=}& H^1(\Ch[4m])^{\M}  \ar[r]^{\ \ \ \ \delta''}\ar[d]^R&  \im(\delta'')^\M \ar[r] \ar[d]^r & H^1(\M;P)  \ar[d]^=\\
0\ar[r] & P^\M  \ar[r]^{}& H^1(\Ch)^{\M}  \ar[r]^{\delta'} & \im(\delta')^\M \ar[r] & H^1(\M;P)
}              
\end{equation}

\noindent\textbf{Step 2: analyze the exact sequence (\ref{eq:diag3}).}
Next, we will study the exact sequence (\ref{eq:diag3}). Let us first recall some known results which we will use.

Recall that the Casson-Morita's $d$ homomorphism is a map $d:\I\to \Z$ such that for any two elements $\phi,\psi \in \I$, 
\begin{equation}
    \label{eq: d properties}
    d ( \phi \psi ) = d ( \phi ) + d ( \psi ) +\ii(\ch(\phi),\ch(\psi )).
\end{equation}
Please see Morita \cite{morita2}, Proposition 5.1 for more discussion. Define $\phi:\Ch[4m]\to\Z/m\Z$ by
    $$\phi(f):=d(f)/8\pmod {m}.$$
When $1\le 2m\le g-2$, we have the following results in \cite{Normal}:
\begin{enumerate}
    \item \cite[Proposition 4.10]{Normal} $d(\Ch)=d(\Ch[4m])=8\Z$.
    \item \cite[Lemma 4.2]{Normal} 
    The homomorphism $\phi$  is a $\M$-invariant group homomorphism.
    \item \cite[Theorem 1.4]{Normal} The homomorphism $\phi$ induces an isomorphism 
    $$H_1(\Ch[4m];\Z)_{\M}\cong \Z/m\Z.$$
\end{enumerate}
By the universal coefficient theorem and (3) above, we have that 
$$H^1(\Ch[4m];\Z/m\Z)^\M =\langle\phi\rangle \cong \Z/m\Z. $$
Hence, the proof of Theorem \ref{thm:Abmodm} will be complete if we can show that the vertical map $R$ in (\ref{eq:diag3}) is an isomorphism, which we will prove in the following claim.

\begin{claim}
    \label{lem: sequence 10}
    We have the following results about the exact sequence (\ref{eq:diag3}).
    \begin{enumerate}
        \item $\delta'(R(\phi))=j'$ where $j'$ is the generator for $H^2(H;\Z/m\Z)^\M$ defined in Proposition \ref{prop:i=2j}. 
        \item The vertical maps $r$ in (\ref{eq:diag3}) is surjective.
        \item The vertical map $R$ in (\ref{eq:diag3}) is an isomorphism.
    \end{enumerate}
\end{claim}
\begin{proof}
Recall that by Proposition \ref{prop: delta(d)}, the connecting homomorphism
$$\delta: H^1(\Ch;\Z)^H\to H^2(H;\Z)$$
maps $d/8$ to $j$. In our case now, if we reduce the coefficients by $\Z\to \Z/m\Z$, then the connecting homomorphism
$$\delta': H^1(\Ch;\Z/m\Z)^H\to H^2(H;\Z/m\Z)$$
maps $d/8\pmod{m}$ to $j'=j \pmod m$. Finally, notice that  $d/8\pmod{m}$ is equal to $R(\phi)$, which is the restriction of $\phi$ to the subgroup $\Ch$. Hence we obtain (1).

Since the diagram (\ref{eq:diag3}) commutes, we have that
$$\delta'(R(\phi))=r(\delta''(\phi))=j'.$$
    Since $j'$ is a generator for $H^2(H;\Z/m\Z)^\M$ by Proposition \ref{prop:i=2j} and is in the image of $r$, we have that  $r$ in  (\ref{eq:diag3}) must be surjective. By the Snake Lemma, $R$ in  (\ref{eq:diag3}) is also surjective. 
     It remains to show that $R$ in  (\ref{eq:diag3}) is also injective.
     
    Since $R(\phi)=d/8 \pmod m$ and that $d/8:\Ch\to \Z$ is surjective, we obtain that $d/8 \pmod m: \Ch\to \Z/m$ is surjective. Therefore we have that $R(k\phi)\neq 0$ if $k$ is not a multiple of $m$. 
    \end{proof}
    The proof of Theorem \ref{thm:Abmodm} is complete.
\end{proof}

\subsection{\boldmath The proof of that $T_1\in [\Ch,\M]$.}

\begin{proposition}
\label{prop: T1 in commutator}
For $g\ge 6$, we have that 
\[{T_1\in [\Ch,\M].}\]
\end{proposition}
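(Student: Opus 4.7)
The plan is to determine the abelianization $A:=\Ch/[\Ch,\M]$ explicitly and then conclude $T_1=0$ in $A$. The genus hypothesis $g\ge 6$ will enter only to make the $m=2$ case of Theorem \ref{thm:Abmodm} applicable (the inequality $2m=4\le g-2$).

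First I would pin down generators of $A$. By Lemma \ref{normalch}, $\Ch$ is normally generated in $\M$ by $T_1,T_2$, and $B_0$, so $A$ is generated (as an abelian group) by the images of the $\M$-orbits of these three elements; but $\M$ acts trivially on the quotient $A$, so each orbit collapses to a single class. Proposition \ref{2.2}(3) gives $B_0\in[\Ch,\M]$, so $B_0$ vanishes in $A$, leaving just $T_1$ and $T_2$ as generators. Lemma \ref{imd} then gives $2T_1=0$ in $A$, and the formula $d(T_n)=4n(n-1)$ gives $d(T_1)=0$, $d(T_2)=8$, so the $\M$-invariant surjection $d/8:\Ch\twoheadrightarrow \Z$ descends to a surjection $A\twoheadrightarrow\Z$ sending $\epsilon T_1+bT_2\mapsto b$.

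From this it follows that every element of $A$ has the form $\epsilon T_1+bT_2$ with $\epsilon\in\{0,1\}$ and $b\in\Z$, and that $\ker(d/8)\subseteq\langle T_1\rangle$. Splitting the short exact sequence
\[0\to \ker(d/8)\to A\xrightarrow{d/8}\Z\to 0,\]
which is automatic since $\Z$ is free, yields $A\cong\Z$ if $T_1=0$ in $A$, and $A\cong\Z\oplus\Z/2\Z$ otherwise. These are the only two possibilities.

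To rule out the second option, I would invoke Theorem \ref{thm:Abmodm} with $m=2$: every $\M$-invariant homomorphism $\Ch\to\Z/2\Z$ factors through $A$, so $\mathrm{Hom}(A,\Z/2\Z)\cong H^1(\Ch;\Z/2\Z)^{\M}\cong\Z/2\Z$. But $\mathrm{Hom}(\Z\oplus\Z/2\Z,\Z/2\Z)=(\Z/2\Z)^2$ has order $4$, a contradiction. Hence $A\cong\Z$ and $T_1=0$ in $A$, i.e.\ $T_1\in[\Ch,\M]$. Given Theorem \ref{thm:Abmodm}, there is no real obstacle at this stage; all the heavy lifting is done by that theorem, whose own proof (in the preceding subsection) is where the hypothesis $g\ge 6$ is truly used.
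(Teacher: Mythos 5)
Your proposal is correct and follows essentially the same route as the paper: reduce the generators of $A=\Ch/[\Ch,\M]$ to $T_1,T_2$ using Proposition \ref{2.2} and Lemma \ref{imd}, observe that $T_1$ is $2$-torsion and $T_2$ has infinite order via $d$, so $A\cong\Z$ or $A\cong\Z\oplus\Z/2\Z$, and rule out the latter by computing $\mathrm{Hom}(A,\Z/2\Z)$ against Theorem \ref{thm:Abmodm} with $m=2$ (which is exactly where $g\ge 6$ enters). The only cosmetic difference is that you split the short exact sequence over $\Z$ where the paper invokes the classification of finitely generated abelian groups; the content is identical.
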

\begin{proof}
Let $A:=\Ch/[\Ch,\M]$. Since $\Ch$ is normally generated by $B_0,T_1,T_2$, we know that $A$ is generated by $B_0,T_1,T_2$. Since $B_0, T_1^2\in [\Ch,\M]$, we know that $A$ as an abelian group is generated by $T_1,T_2$ and that $T_1$ is a 2-torsion. Since we have a homomorphism $d:A\to\Z$ such that $d(T_2)=8\ne 0$ by \cite[Theorem 5.3]{morita2}. We have that $T_2$ must be of infinite order in $A$. Hence, by the classification of finitely generated abelian group, if $T_1\notin [\Ch,\M]$, we have that $A\cong \Z\oplus \Z/2$. However, this implies that
\[
H^1(\Ch;\Z/2\Z)^{\M}\cong\text{Hom}(A;\Z/2) \cong \Z/2\oplus \Z/2
\]
This contradicts Theorem \ref{thm:Abmodm} when $m=2$ where we need $g-2\ge 4$. We thus obtain that $T_1\in [\Ch,\M]$ when $g\ge 6$.
\end{proof}

\subsection{The proof of the right equality of Theorem \ref{main1}}
\begin{proof}
Since $d:\Ch\to\Z$ is a $\M$-invariant group homomorphism, we automatically have that $[\Ch,\M]\leq \ker d$. We just need to prove the reverse inclusion. Kosuge \cite{Chilling} showed that $\ker d$ is normally generated by $B_0,T_1$, and $[\J,\M]$. It is clear that $[\J,\M]\le [\Ch,\M]$. We proved that $T_1\in [\Ch,\M]$ in Proposition \ref{prop: T1 in commutator} when $g\ge 6$. We have $B_0\in  [\Ch,\M]$ by Proposition \ref{2.2}. It follows that $\ker d= [\Ch,\M]$.
\end{proof}

\subsection{The proof of Theorem \ref{thm:homological results}}
First of all, we state a simple lemma. 
\begin{lemma}
\label{lem:H1 and commutator}
    Let $G$ be a normal subgroup of a group $M$. Suppose that $f:G\to A$ is an $M$-invariant surjective homomorphism onto an abelian group $A$ such that 
    $\ker f\subseteq[G,M].$
    Then we have that 
    \begin{align}
        H_1(G)_M&\cong A \ \\
        H^1(G;B)^M&\cong Hom(A,B) \ \text{ for any abelian group $B$.}
    \end{align}
\end{lemma}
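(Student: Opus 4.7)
The plan is to identify everything in sight with $G/[G,M]$ and then use the hypothesis to identify this with $A$.

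First I would handle the homology statement. Since $M$ acts on $G$ by conjugation (as $G$ is normal in $M$), the induced action on $H_1(G;\Z)=G/[G,G]$ sends $g[G,G]$ to $mgm^{-1}[G,G]$. Taking $M$-coinvariants therefore amounts to further quotienting by the subgroup generated by elements of the form $mgm^{-1}g^{-1}=[m,g]$, so
\[
H_1(G;\Z)_M \;\cong\; G\big/\langle [G,G],\,[G,M]\rangle \;=\; G/[G,M],
\]
where the last equality holds because $[G,G]\subseteq[G,M]$ (as $G\subseteq M$).

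Next I would show $[G,M]=\ker f$. The inclusion $\ker f\subseteq[G,M]$ is given. For the reverse inclusion, take any $g\in G$ and $m\in M$. Because $f$ is $M$-invariant, $f(mgm^{-1})=f(g)$, and because $A$ is abelian,
\[
f([g,m])=f(g)\,f(mg^{-1}m^{-1})=f(g)-f(g)=0,
\]
so $[G,M]\subseteq\ker f$. Combining with the previous paragraph and the surjectivity of $f$, we obtain $H_1(G;\Z)_M\cong G/\ker f\cong A$.

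For the cohomology statement with coefficients in an abelian group $B$, the universal coefficient theorem (or directly) gives $H^1(G;B)=\mathrm{Hom}(G,B)=\mathrm{Hom}(G/[G,G],B)$. A homomorphism $\varphi:G\to B$ is $M$-invariant precisely when $\varphi(mgm^{-1})=\varphi(g)$ for all $g\in G,m\in M$, i.e.\ when $\varphi$ vanishes on $[G,M]$. Hence $H^1(G;B)^M=\mathrm{Hom}(G/[G,M],B)=\mathrm{Hom}(A,B)$ by the identification $G/[G,M]\cong A$ established above.

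There is no real obstacle here; the only thing to be careful about is the direction $[G,M]\subseteq \ker f$, which is where the $M$-invariance of $f$ and the commutativity of $A$ are used. The hypothesis $\ker f\subseteq[G,M]$ is what upgrades the surjection $G/[G,M]\twoheadrightarrow A$ coming from $f$ into an isomorphism.
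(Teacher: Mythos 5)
Your proof is correct and follows essentially the same route as the paper's (much terser) argument: identify $H_1(G)_M$ with $G/[G,M]$, show $[G,M]=\ker f$ using the $M$-invariance of $f$ together with the hypothesis $\ker f\subseteq[G,M]$, and then read off both statements. The paper omits the verification that $[G,M]\subseteq\ker f$ and the identification of coinvariants, which you supply; there is no substantive difference.
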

\begin{proof}
Since $f$ is $M$-invariant, we have that $[G,M]=\ker f$. 
\begin{align*}
    &H_1(G)_M \cong G/[G,M] = G/\ker f\cong A,\\
    &H^1(G;B)^M\cong Hom(G,B)^M \cong Hom(H_1(G)_M,B){\cong} Hom(A,B).\qedhere
\end{align*}
    \end{proof}
We now start the proof of Theorem \ref{thm:homological results}.
\begin{proof}
By Lemma \ref{2.3}, we have that $\W(0)= [\W(0),\M]$, which implies part (1) of Theorem \ref{thm:homological results} by Lemma \ref{lem:H1 and commutator}.

Part (2) of Theorem \ref{thm:homological results} follows from Lemma \ref{lem:H1 and commutator} and first equation of Theorem \ref{main1}.
\end{proof}
\vspace*{4ex}

\section{The curve complex $C_0^\alpha$ and the proof of Theorem \ref{C_0}}
The main goal of this Section is to prove Theorem \ref{C_0}. Using this result, we will deduce the rest of the main theorems that we stated in the Introduction. Let us give a brief overview of this Section. In 4.1, we prove that various curve complexes are connected.  Using those results,  in 4.2, we prove that the complex $C_0^\alpha$ is connected.  In 4.3, we show that the actions of $\Ch$ and $\W(0)$ on $C_0^\alpha$ are both transitive. Hence, we finish proving the two parts of Theorem \ref{C_0}. Finally, we prove Theorem \ref{main2} which states that $H_0$ normally generates $\Ch$. 

Throughout this section, we will fix  a primitive element $\alpha$ in $H:= H_1(S_g^1;\Z)$. For two curves  $b,c$ both homologous to $\alpha$, we will denote their homological genus 
$g^\alpha(b,c)$ simply by $g(b,c)$, suppressing $\alpha$ from the notation. 

\begin{defn} 
$C_0^\alpha$ is a curve complex of $S_g^1$ consisting of vertices and edges described below:
\begin{itemize}
\item vertex: a curve $c$ such that $g(\alpha,c)=0$. 
\item edge: two vertices $c,d\in C_0^a$ form an edge if and only if $T_cT_d^{-1}$ is conjugate to $B_0$ in $\M$.
\end{itemize}
\end{defn}

\subsection{Putman's connectivity and its applications}
Putman proved the following useful lemma in  \cite{Putman}.
\begin{lemma}[Putman's connectivity lemma]\label{Putman}
Consider a group $G$ acting upon a simplicial complex $X$ where $X_n$ denotes its $n$th skeleton. Fix a basepoint $v \in X_0$ and a set $S$ of generators for $G$. Assume the following hold.
\begin{enumerate}
\item For all $v' \in X_0$, the orbit $Gv$ intersects the connected component of $X$ containing $v'$;
\item  For all $s \in S^{\pm 1}$, there is some path $P(s)$ in $X$ from $v$ to $sv$.
\end{enumerate}
Then X is connected.
\end{lemma}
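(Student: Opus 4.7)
The plan is to combine the two hypotheses via an induction on word length. First, I would observe that condition (1) reduces the problem to showing that the entire orbit $Gv$ lies inside a single connected component of $X$, namely the one containing $v$. Indeed, for an arbitrary vertex $v'\in X_0$, condition (1) produces some $g\in G$ with $gv$ in the component of $v'$; if $gv$ is joined to $v$ by a path in $X$, then $v'$ is too, and we are done.

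To prove that $gv$ lies in the component of $v$ for every $g\in G$, I would induct on the word length $\ell(g)$ of $g$ with respect to $S^{\pm 1}$. The base case $\ell(g)=0$, i.e. $g=1$, is vacuous. For the inductive step, suppose $\ell(g)=n\ge 1$ and write $g = s_1 s_2 \cdots s_n$ with each $s_i\in S^{\pm 1}$. Set $h := s_1 \cdots s_{n-1}$, so that $\ell(h)\le n-1$ and $g = h s_n$. By the inductive hypothesis, there is a path in $X$ from $v$ to $hv$.

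The key trick for the inductive step is to translate a known path by a group element. Condition (2) furnishes a path $P(s_n)$ in $X$ from $v$ to $s_n v$. Since $G$ acts on $X$ by simplicial automorphisms, applying $h$ to this path yields a path $h\cdot P(s_n)$ from $hv$ to $h s_n v = gv$. Concatenating with the path from $v$ to $hv$ supplied by the inductive hypothesis produces a path from $v$ to $gv$, completing the induction. There is essentially no obstacle here: once one has set up the two-step reduction (condition (1) to reduce to the orbit, then induction on word length reducing to condition (2)), the proof writes itself, and the only point worth stating is that $G$ acts on $X$ by simplicial automorphisms so that translates of paths are paths.
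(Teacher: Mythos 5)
Your proof is correct and is essentially the standard argument: the paper itself does not reprove this lemma (it is quoted from Putman's \emph{Cutting and pasting in the Torelli group}), and your two-step reduction --- condition (1) to reduce connectivity of $X$ to connectivity of the orbit $Gv$ to $v$, then induction on word length using translated copies of the paths $P(s)$ from condition (2) --- is exactly the argument in Putman's original proof. No gaps.
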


We consider the following sub-complex $C(a,p,q)$ of $C(a)$.
\begin{defn} For $a\in C(\alpha)$ and $p,q\in\Z$,  define $C(a,p,q)$ as the following
\begin{itemize}
\item vertex: a curve $c\in C(a)$ disjoint from $a$ such that $g(a,c)=p$ or $g(a,c)=q$
\item edge: two vertices $b,c\in C(a,p,q)$ form an edge if $i(b,c)=0$.
\end{itemize}
\end{defn}
We now prove the following connectivity lemma.
\begin{lemma}\label{connectivity1}
For $g\ge 4$, we have the following.
\begin{enumerate}
\item 
When $1\le p<q \le g-1$, the complex $C(a,p,q)$ is path-connected.
\item
The complex $C(a,1,-1)$ is path-connected.
\end{enumerate}
\end{lemma}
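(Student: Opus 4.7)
My plan is to apply Putman's connectivity lemma (Lemma~\ref{Putman}) to the action of $G := \mathrm{Stab}_{\Mod(S_g^1)}(\vec{a})$ on the complex $C(a,p,q)$, where $\vec{a}$ carries the orientation with $[\vec{a}]=\alpha$. Every $f\in G$ fixes $[\vec{a}]$ in $H$, so $f$ preserves the set of curves homologous to $\vec{a}$; by the $\Mod$-equivariance of the Chillingworth homomorphism applied to fake bounding pair maps $T_aT_c^{-1}$, $f$ also preserves the homological genus $g(a,c)$. Thus $G$ acts on $C(a,p,q)$. I will fix a basepoint $v_0\in C(a,p,q)$ with $g(a,v_0)=p$ in part (1), or $g(a,v_0)=1$ in part (2), in a convenient standard position.

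For condition (1) of Putman's lemma, the change-of-coordinates principle shows that $G$ acts transitively on the $p$-vertices of $C(a,p,q)$, so every $p$-vertex lies in $G\cdot v_0$. To reach the $q$-vertices (respectively $(-1)$-vertices), I will exhibit an edge from each such vertex to some $p$-vertex. For a $q$-vertex $c'$ in part (1), the curves $a$ and $c'$ bound a genus-$q$ subsurface $\Sigma$ on the right of $\vec{a}$; since $p<q$, one can embed a curve $c\subset\Sigma$ with $g(a,c)=p$ and disjoint from $a\cup c'$, giving the required edge. For a $(-1)$-vertex $c'$ in part (2), the subsurface on the right of $\vec{a}$ cut off by $a\cup c'$ has genus $g-2\ge 2$ (and contains $\partial S_g^1$), inside which one can draw a curve $c$ with $g(a,c)=1$ disjoint from $a\cup c'$.

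For condition (2), I will take $S$ to be a finite generating set for $G$ consisting of Dehn twists $T_\beta$ about simple closed curves $\beta$ in a standard Humphries-like configuration disjoint from $a$. For each $T_\beta\in S$: if $\beta$ is disjoint from $v_0$, then $T_\beta v_0=v_0$ and the constant path works; otherwise, I produce an intermediate vertex $w\in C(a,p,q)$ disjoint from both $v_0$ and $\beta$, giving a length-two path $v_0\sim w\sim T_\beta v_0$ since $T_\beta$ fixes $w$. The existence of such $w$ requires enough room in $S_g^1\setminus(a\cup v_0\cup\beta)$, and the assumption $g\ge 4$ is precisely what guarantees this.

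The main obstacle will be the systematic production of the intermediate vertex $w$ in condition (2): for each generator $T_\beta$ one must check that the complement of $v_0\cup\beta$ on the appropriate side of $\vec{a}$ still supports a curve with the prescribed homological genus. This is slightly more delicate in part (2), where the sign of $g(a,w)$ may need to be chosen adaptively ($+1$ or $-1$) depending on which side of $\vec{a}$ has enough remaining genus after $\beta$ is removed. Choosing $S$ so each $\beta$ has small support, and carefully positioning $v_0$, should reduce this to a short list of local configurations that are all realizable once $g\ge 4$.
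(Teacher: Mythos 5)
Your proposal follows essentially the same route as the paper: both apply Putman's connectivity lemma to the natural action of the stabilizer of $a$ (the paper uses the mapping class group of the cut surface $S_g^1(a)$, which amounts to the same thing here), verify condition (1) by change of coordinates on the $p$-vertices together with an explicit disjoint curve connecting each $q$- or $(-1)$-vertex to a $p$-vertex, and verify condition (2) with a twist generating set supported off $a$ and a length-two path through an intermediate vertex disjoint from both $v_0$ and the twisting curve. The paper resolves the "short list of local configurations" you defer by placing $v_0=c_p$ in a standard chain position so that exactly one generator meets it, which is the same device you describe.
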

\begin{proof}
Let us consider the compact surface $S_g^1(a)$ with boundary such that $S_g^1(a)$ is obtained by adding boundary components to each end in $S_g^1-a$. The mapping class group $\mathcal{M}(S_g^1(a))$ acts on $C(a,p,q)$ since $\mathcal{M}(S_g^1(a))$ preserves the genera of sub-surfaces.

Let $v=c$ be a curve in $C(a,p,q)$ such that $g(a,c)=p$. By the change of coordinate principle, we know that the action of $\M(S(a))$ on $C(a,p,q)$ is transitive on curves $c$ such that $g(a,c)=p$. Let $d\in C(a,p,q)$ be such that $g(a,c)=q$. Since $p<q\le g-1$, there is a curve $c'\in C(a,p,q)$ disjoint from $d$ such that $g(a,c')=p$. Thus $d$ is connected by a path in $C(a,p,q)$ to $\mathcal{M}(S_g^1(a))\cdot v$. 

The mapping class group $\M(S(a))$ is generated by the Dehn twists about black curves in Figure \ref{s31}. See e.g. \cite[Chapter 5]{primer} for more discussion.  We call this generating set $S$.
\begin{figure}[h]
        \centering
        \includegraphics[width=1\linewidth]{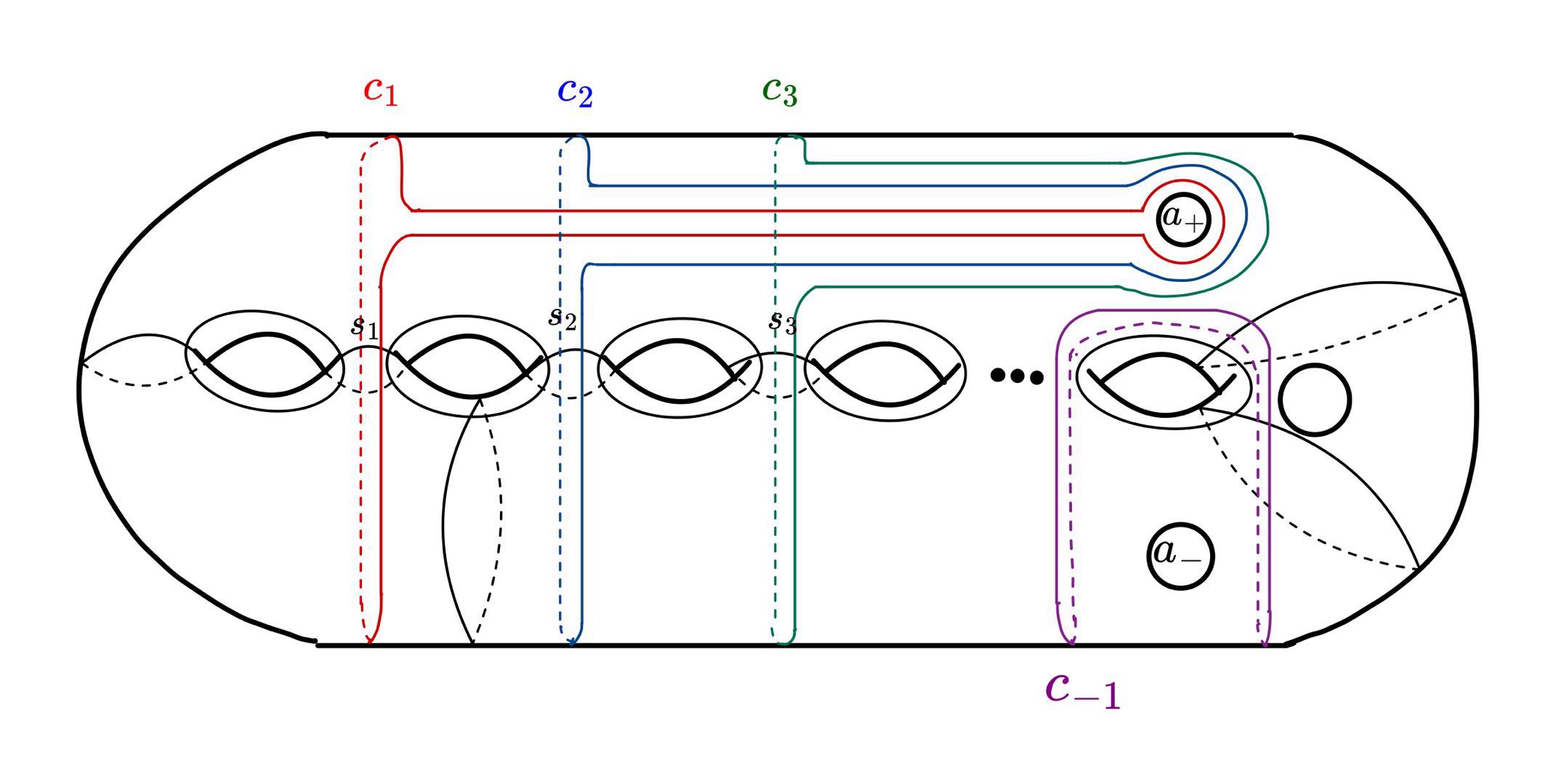}
        \vspace{-20pt}
        \caption{Generating set of $\mathcal{M}(S_g^1(a))$} 
        \label{s31}
    \end{figure}

 We now choose $v:=c_p$ as in Figure \ref{s31}. The only element in the generating set $S$ that intersect $c_p$ is $s_p$. For $p\neq q$, we also know that $T_{s_{p}}(c_p)$ is disjoint from $s_q$. By Lemma \ref{Putman},  we know that $C(a,p,q)$ is path-connected.

For (2), let $v:=c_1$. By the change of coordinate principle, the action of $\mathcal{M}(S_g^1(a))$ on $C(a,1,-1)$ is transitive on curves $c$ such that $g(a,c)=1$. Let $d\in C(a,p,q)$ be a curve such that $g(a,c)=-1$. Then there is a curve $c'\in C(a,1,-1)$ such that $g(a,c')=1$ disjoint from $d$. Thus $d$ is connected by a path to $Gv$. 

Now the only elements in the generating set $S$ that intersect $v$ is $s_1$. We also know that $T_{s_1}(c_1)$ is disjoint from $c_{-1} \in C(a,1,-1)$. Hence, $c_1$ is path-connected to $T_{s_1}(c_1)$ via $c_{-1}$. By Lemma \ref{Putman}, we know that $C(a,1,-1)$ is path-connected.
\end{proof}

\begin{defn} We define a curve complex $C^{\text{disj}}(a,-1)$ as the following.
\begin{itemize}
\item vertex: a curve $c$ such that $c$ is homologous to $a$ and is disjoint from $a$ and $g(a,c)=-1$.
\item edge: two vertices $c,d\in C^{\text{disj}}(a,-1)$ form an edge if there exists a simple closed curve $a'$ homologous to $a$ and disjoint from $c,d$ such that $g(a,a')=1$.
\end{itemize}
\end{defn}
\begin{lemma}\label{connectivity2}
The complex $C^{\text{disj}}(a,-1)$ is path-connected.
\end{lemma}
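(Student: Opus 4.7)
The plan is to apply Putman's connectivity lemma (Lemma \ref{Putman}) with $G := \mathcal{M}(S_g^1(a))$ acting on $C^{\text{disj}}(a,-1)$. Since $G$ preserves $a$ together with its two sides, it preserves the homological genus $g(a,\cdot)$ and therefore acts on $C^{\text{disj}}(a,-1)$.

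First I would verify hypothesis (1) of Putman's lemma. By the change of coordinates principle, any two curves disjoint from $a$ and homologous to $a$ that bound together with $a$ a genus $1$ subsurface on the specified side of $a$ are equivalent under $G$. Thus $G$ acts transitively on the vertex set of $C^{\text{disj}}(a,-1)$, which establishes hypothesis (1) for any choice of basepoint.

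Next I would verify hypothesis (2). Take as basepoint $v := c_{-1}$, the standard curve analogous to the one appearing in the proof of Lemma \ref{connectivity1}(2), and use the generating set $S$ of $G$ consisting of the Dehn twists about the black curves in Figure \ref{s31}. For every generator whose defining curve is disjoint from $c_{-1}$, the action fixes $v$ and no path is required. The only generator whose defining curve intersects $c_{-1}$ is $T_{s_{-1}}$, so it remains to exhibit a path in $C^{\text{disj}}(a,-1)$ from $c_{-1}$ to $T_{s_{-1}}(c_{-1})$. For this I would produce an intermediate vertex $c'$, namely a curve homologous to $a$, disjoint from $a$, with $g(a,c') = -1$, together with two certificate curves $a'_1$ and $a'_2$, each homologous to $a$ with $g(a,a'_i) = 1$, arranged so that $a'_1$ is disjoint from $\{c_{-1}, c'\}$ and $a'_2$ is disjoint from $\{c', T_{s_{-1}}(c_{-1})\}$. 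These two certificates witness two successive edges $c_{-1} \sim c' \sim T_{s_{-1}}(c_{-1})$ in $C^{\text{disj}}(a,-1)$, which finishes the verification.

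The main obstacle will be the explicit construction of $c'$, $a'_1$, and $a'_2$. In contrast with Lemma \ref{connectivity1}(2), where we only needed an intermediate curve disjoint from the two endpoints, here we must additionally produce certificate curves of the opposite sign of homological genus, each disjoint from the relevant pair. The hypothesis $g \geq 4$ (and possibly a slightly larger lower bound, in the spirit of how $g\ge 4$ was used in the preceding lemmas) should provide enough room inside $S_g^1(a)$ to realize all the disjointness and homology constraints simultaneously, via a direct surface-topology picture near the standard configuration of Figure \ref{s31}.
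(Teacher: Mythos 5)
Your overall strategy is sound but genuinely different from the paper's, and as written it stops short at the one step that carries all the content. The paper does not re-run Putman's lemma on $C^{\text{disj}}(a,-1)$ at all: it observes that $C(a,-1,1)$ is path-connected (Lemma \ref{connectivity1}(2) with the orientation of $a$ reversed), takes a path $c_1=c,\dots,c_n=d$ there, and notes that consecutive vertices are disjoint homologous curves and hence have distinct values of $g(a,\cdot)$, so the path alternates between $g(a,\cdot)=-1$ and $g(a,\cdot)=+1$. The even-indexed vertices are then exactly the certificate curves $a'$ required by the edge relation of $C^{\text{disj}}(a,-1)$, so $c_1,c_3,\dots,c_n$ is already a path in $C^{\text{disj}}(a,-1)$. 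This reduction is shorter than your plan and reuses the already-verified connectivity statement rather than re-verifying Putman's hypotheses for a new complex.

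The gap in your version is precisely the part you flag as ``the main obstacle'': you never produce $c'$, $a'_1$, $a'_2$, and hypothesis (2) of Putman's lemma is exactly where the work lives, so asserting that $g\ge 4$ ``should provide enough room'' leaves the proof incomplete. That said, the gap is fillable, and more easily than your two-edge scheme suggests: the edge relation in $C^{\text{disj}}(a,-1)$ does not require the two endpoints to be disjoint, only that a single certificate curve $a'$ with $g(a,a')=1$ be disjoint from both. Taking $a':=c_1$ from Figure \ref{s31}, which lies on the opposite side of $a$ from $c_{-1}$ and $s_{-1}$ and is therefore disjoint from both $c_{-1}$ and $T_{s_{-1}}(c_{-1})$, gives a single edge from $v$ to $T_{s_{-1}}(v)$ and no intermediate vertex $c'$ is needed. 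With that one explicit certificate written down (and the routine check that $\mathcal{M}(S_g^1(a))$ preserves $g(a,\cdot)$ and hence the edge relation, which you correctly assert), your Putman-style argument would go through; without it, the proof is not yet a proof.
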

\begin{proof}
Since $C(a,1,-1)$ is path-connected from Lemma \ref{connectivity1}, by the change of orientation on the curve $a$, we know that $C(a,-1,1)$ is path-connected as well. 

Let $c,d\in C^{\text{disj}}(a,-1)$, there is a path $c_1=c,c_2,...,c_n=d$ in $C(a,-1,1)$ connecting $c,d$. Since a pair of curves $x,y$ such that $g(a,x)=g(a,y)$ must intersect, we know that $g(a,c_{2i-1})=-1$ and $g(a,c_{2i})=1$. Since $g(a,c_{2i})=1$ and that $i(c_{2i},c_{2i-1})=i(c_{2i},c_{2i+1})=0$, we know that $c_1,c_3,...,c_n$ is a path in $C^{\text{disj}}(a,-1)$.
\end{proof}

Let $b$ be a simple closed curve such that $g(\alpha,b)=2$. We now consider the following curve complex a curve complex $C_0^\alpha(b)$ which is a sub-complex of $C_0^\alpha$.
\begin{defn} We define a curve complex $C_0^\alpha(b)$ as the following.
\begin{itemize}
\item vertex: a curve $c\in C_0^\alpha$ such that $i(b,c)=0$;
\item edge: two vertices $c,d\in C_0^\alpha(b)$ form an edge if and only if $T_cT_d^{-1}$ is conjugate to $B_0$ in $\M$.
\end{itemize}
\end{defn}
We now prove the following.
\begin{lemma}\label{connectivity3}
Let $b$ be a simple closed curve such that $g(\alpha,b)=2$. Then $C_0^\alpha(b)$ is path-connected.
\end{lemma}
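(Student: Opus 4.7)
The natural strategy is to apply Putman's connectivity lemma (Lemma \ref{Putman}) to the action of a suitable subgroup $G\leq \M$ on $C_0^\alpha(b)$. Fix an orientation on $b$ so that $[\vec b]=\alpha$, and let $G$ be the stabilizer in $\M$ of the isotopy class of the oriented curve $\vec b$. Then $G$ preserves $\alpha$ and (by the $\M$-equivariance of the Chillingworth homomorphism and of $g^\alpha$) preserves the condition $g(\alpha,c)=0$, while also fixing $b$ as a curve; hence $G$ acts on $C_0^\alpha(b)$. Via a Birman exact sequence, $G$ is generated by $T_b$ together with the image of the mapping class group of the cut surface $S_b := S_g^1\setminus b$, which has genus $g-1$ and three boundary components.

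For condition (1) of Putman's lemma I would prove transitivity of $G$ on the vertex set of $C_0^\alpha(b)$. Any vertex $c$ is disjoint from $b$ and homologous to $\alpha$, and by transitivity of the homological genus (Proposition \ref{prop:g properties}) satisfies $g(b,c)=g(\alpha,c)-g(\alpha,b)=-2$. By Proposition \ref{prop:g properties}(4), $c$ therefore cobounds with $b$ a subsurface of genus $2$ on the side fixed by the orientation. Inside $S_b$ this means $c$ is a simple closed curve separating a genus-$2$ piece with boundary $c\cup b^+$ from a genus-$(g-3)$ piece with boundary $c\cup b^-\cup\partial S_g^1$, where the label $b^+$ is determined by the orientation of $\vec b$. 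The change-of-coordinates principle applied to $S_b$ implies that $\M(S_b)$, and hence $G$, acts transitively on the set of such curves, so condition (1) of Lemma \ref{Putman} is immediate.

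For condition (2) I would fix a standard basepoint $v_0\in C_0^\alpha(b)$ (a curve bounding a genus-$2$ piece with $b^+$) and a Humphries-style generating set for $\M(S_b)$ consisting of Dehn twists $T_{\gamma_1},\dots, T_{\gamma_N}$ about curves in $S_b$, together with $T_b$. Each $T_b$ fixes $v_0$. For each $T_{\gamma_i}$, if $\gamma_i$ is disjoint from $v_0$ then $T_{\gamma_i}(v_0)=v_0$ and there is nothing to check. Otherwise I would exhibit a short sequence $v_0=w_0, w_1,\dots, w_k=T_{\gamma_i}(v_0)$ in $C_0^\alpha(b)$ such that each consecutive pair $(w_j,w_{j+1})$ satisfies the criterion of Lemma \ref{lem: i=2} for $T_{w_j}T_{w_{j+1}}^{-1}$ to be conjugate to $B_0$: namely, the two curves are nonseparating and homologous in $S_g^1$, meet in exactly two points, have homological genus zero, and together lie in a subsurface of genus $\leq 4$ with one boundary component.

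The main obstacle is the last, locality, requirement: every elementary move must be realized inside a small subsurface of $S_g^1$ of genus at most $4$. The hypothesis $g\geq 6$ is used here, because the complementary genus-$(g-3)$ piece of $S_b$ then has genus at least $3$, providing enough room in $S_b$ to choose auxiliary curves so that each Humphries twist $T_{\gamma_i}$ can be expressed as a short concatenation of such local $B_0$-moves. This is a finite case analysis, one case per type of Humphries generator (classified by how $\gamma_i$ meets $v_0$, $b$, and the two complementary pieces of $c$ in $S_b$), in the same spirit as the arguments used in \cite{Normal} and in the proof of Lemma \ref{connectivity1}. Once this is carried out, Putman's connectivity lemma delivers the path-connectedness of $C_0^\alpha(b)$.
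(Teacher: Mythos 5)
Your overall framework is exactly the paper's: apply Putman's lemma (Lemma \ref{Putman}) to the action of the cut-surface mapping class group on $C_0^\alpha(b)$, get condition (1) from the change-of-coordinates principle applied to $S_g^1$ cut along $b$ (where every vertex cobounds a genus-$2$ piece with one side of $b$ and a genus-$(g-3)$ piece with the other), and then verify condition (2) on a Dehn-twist generating set of that cut-surface group. Up to the cosmetic difference that you phrase the group as the stabilizer of $\vec b$ via a Birman exact sequence while the paper works directly with $\mathcal{M}(S_g^1(b))$, the setup is the same.

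The gap is that you never actually verify condition (2); you only assert that it follows from ``a finite case analysis, one case per type of Humphries generator.'' That case analysis \emph{is} the proof — everything before it is routine — and it is not obviously routine, because an edge of $C_0^\alpha(b)$ is not mere disjointness but the demand that $T_{w_j}T_{w_{j+1}}^{-1}$ be conjugate to $B_0$, which by Lemma \ref{lem: i=2} requires exhibiting curves meeting in exactly two points with homological genus $0$ (and controlling the genus of a neighborhood to pin down $m=1$ rather than some $B_{m,m}$). The paper discharges this concretely: choosing the basepoint $v=c_2$ so that exactly one generator $s_2$ of the cut-surface group meets it, it produces a single explicit auxiliary curve $r$ (Figure \ref{s32}) with $i(r,v)=2$, $r$ disjoint from $s_2$ (so $i(r,T_{s_2}(v))=i(r,v)=2$), and $g(r,v)=g(r,b)+g(b,v)=-2+2=0$; Lemma \ref{lem: i=2} then makes $\{v,r\}$ and $\{r,T_{s_2}(v)\}$ edges, giving a path of length two. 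To complete your argument you must supply this construction (or an equivalent one for each generator in your chosen set that meets $v_0$), including the verification of the intersection number, the homological-genus computation, and the containment in a small subsurface needed to invoke the $B_0$ (rather than $B_{m,m}$) conclusion of Lemma \ref{lem: i=2}. As written, the proposal defers precisely the step where the lemma could fail.
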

\begin{proof}
Let us consider the compact surface $S_g^1(b)$ with boundary such that $S_g^1(b)$ is obtained by adding boundary components to each end in $S_g^1-b$. The mapping class group $\mathcal{M}(S(b))$ acts on $C(b,p,q)$ since $\mathcal{M}(S_g^1(b))$ preserves the genera of sub-surfaces. 

Let $v:=c_2$. By the change of coordinate principle, we know that the action of $\mathcal{M}(S_g^1(b))$ on $C_0^\alpha(b)$ is transitive. By \cite[Chapter 5]{primer}, the mapping class group $\mathcal{M}(S_g^1(b))$ is generated by the Dehn twists about white curves in Figure \ref{s32}.
\begin{figure}[h]
        \centering
        \includegraphics[width=1\linewidth]{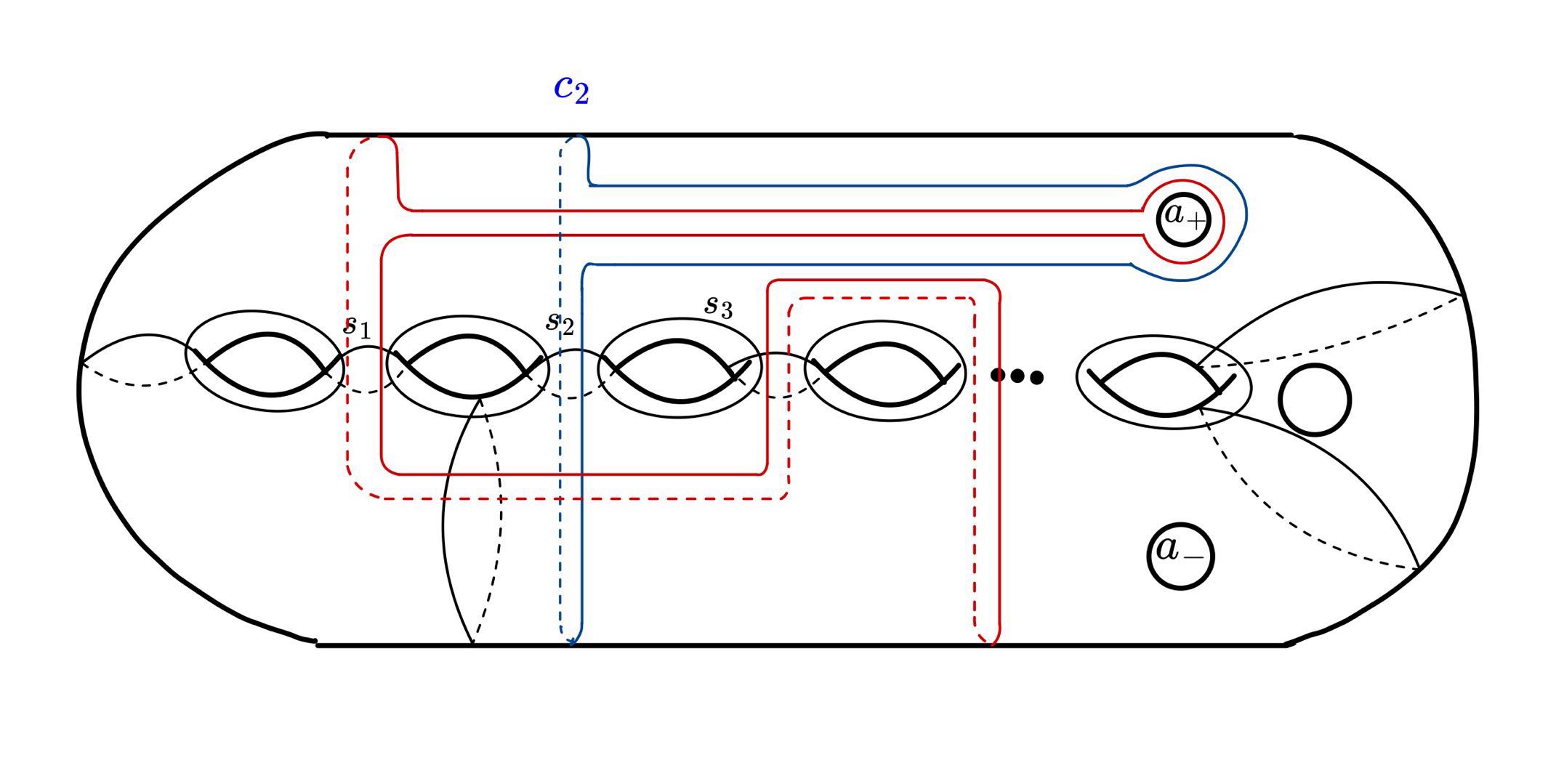}
        \vspace{-20pt}
        \caption{Generating set of $\M(S(a))$} 
        \label{s32}
    \end{figure}
Now the only elements in the generating set $S$ that intersect $c_2$ is $s_2$. Consider the red curve $r$ in the Figure \ref{s32}. Observe that $i(r,v)=2$. Since $r$ is disjoint from $s_2$, \[
i(r,T_{s_{3}}(v))=i(r,v)=2\]. Since $g(b,r)=g(b,v)=2$, we know \[
g(r,T_{s_{2}}(v))=g(r,v)=g(r,b)+g(b,v)=-2+2=0.\] 

By Lemma \ref{lem: i=2}, we know that $v,r$ is an edge in $C_0^\alpha(b)$ and $v,T_{s_{2}}(v)$ is an edge in $C_0^\alpha(b)$. We know that $v,T_{s_{2}}(v)$ is connected by a path of length $2$ in $C_0^\alpha(b)$.

By Lemma \ref{Putman},  we then know that $C_0^\alpha(b)$ is path-connected.
\end{proof}

\subsection{\boldmath The proof that $C_0^\alpha$ is path-connected}
The goal of this section is to prove the part (1) of Theorem \ref{C_0} which claims that $C_0^\alpha$ is connected. 

Let $a,a'$ be two curves in $C_0^\alpha$. We will prove that there is a path between $a,a'$ in $C_0^\alpha$. We break the proof into three steps.

\p{\textbf{\boldmath Step 1: We can find a path $P_1:=a_0=a,a_1,...,a_n=a'$ in $C(\alpha)$ such that $0\le g(\alpha,a_i)\le 1$}}

\begin{proof}
Firstly, we find a path $P$  in $C(\alpha)$
\[
a_0=a,a_1,...,a_n=a'
\]
connecting $a,a'$. Now $a_i,a_{i+1}$ are disjoint homologous curves such that $a_i,a_{i+1}$ form a bounding pair. If $P$ does not satisfying the condition, we prove by modifying the path to $\min_{x\in P}\{ g(\alpha,x)\}$. Suppose that $\min_{x\in P}\{ g(\alpha,x)\}<0$,

Let $j$ be the index such that $g(\alpha,a_j) = \min\{ g(\alpha,a_i)\}$ and by the assumption, we know $g(\alpha,a_j)<0$. Then since $a_{j-1},a_{j+1}$ are both disjoint from $a_j$, we know that $g(\alpha,a_{j-1})\neq g(\alpha,a_j)$ and $g(\alpha,a_{j+1})\neq g(\alpha,a_j)$. We  then have that 
\[  g(a_j,a_{j-1})= g(a_{j},\alpha)+ g(\alpha,a_{j-1}) = -g(\alpha, a_{j})+ g(\alpha,a_{j-1})>0\]
and 
\[
 g(a_j,a_{j+1})= g(a_{j},\alpha)+ g(\alpha,a_{j+1})= -g(\alpha, a_{j})+ g(\alpha,a_{j+1}) >0.
\]
This implies that $a_{j-1}$ and $a_{j+1}$ are on the same side of $a_j$.

Let $p=g(a_j,a_{j-1})$ and $q=g(a_j,a_{j+1})$. 

\begin{itemize}
\item
If $p\neq q$, let $C=C(a_j,p,q)$;
\item
If $p=q\neq 1$, let $C=C(a_j,1,q)$;
\item 
If $p=q=1$, let $C=C(a_j,1,2)$
\end{itemize}

Then since $C$ is path-connected, we know that there is a path between $a_{j-1}$ and $a_{j+1}$ in $C$. Replace the $a_{j-1},a_j,a_{j+1}$ part of $P_1$ by this new path in $C$, we delete one minimum point in $\min_{x\in P'}\{ g(\alpha,x)\}$ and do not increase $\max_{x\in P_1}\{ g(\alpha,x)\}$. We can continue this procedure until $\min_{x\in P'}\{ g(\alpha,x)\} = 0$. We call the new path $P_1'$ where $\min_{x\in P_1'}\{ g(\alpha,x)\} = 0$

The proof that we can modify the path $P_1'$ with $P_1''$ such that $\max_{x\in P_1''}\{ g(\alpha,x)\}=1$ is the same. This proves that we can find the path as in Step 1.
\end{proof}

\p{\textbf{\boldmath Step 2: We can find a path $P_2: a_0=a,a_1,\cdots,a_n=a'$ in $C(\alpha)$ such that $g(\alpha,a_i)$ is either $0$ or $2$}}
\begin{proof}
Since $i(a_i,a_{i+1})=0$, we know that $g(a_i,a_{i+1})\neq 0$. We thus know that $g(a,a_{2i})=0$ and $g(a,a_{2i+1})=1$ for allowable $i$. 

Consider the curve $a_{2i+1}$, since the complex $C^{disj}(a_{2i+1},-1)$ is connected by Lemma \ref{connectivity2}, we know that $a_{2i}$ and $a_{2i+2}$ can be connected by a path $c_0,...,c_k$ in $C^{disj}(a_{2i+1},-1)$. We thus know that $g(\alpha,c_j)=0$. Each pair $c_j,c_{j+1}$, there exists $d_j$ disjoint from both $c_j,c_{j+1}$ and that $g(a_{2i+1},d_j)=1$. Thus
\[
g(\alpha,d_j)=g(\alpha,a_{2i+1})+g(a_{2i+1},d_j)=2.
\]
This shows that $a_{2i},a_{2i+2}$ is connected by a path in $C(a)$ whose vertices $v$ satisfying that either $g(\alpha,v)=0$ or $g(\alpha,v)=2$. We thus obtain a path $P_2$ connecting $a,a'$ satisfying Step 2.
\end{proof}
\p{\textbf{\boldmath Step 3: There is a path $P_3$ in $C_0^a$ connecting $a,a'$}}
We now prove (1) of Theorem \ref{C_0}.
\begin{proof}
Now $P_2$ is a path in $C(\alpha)$ such that $g(\alpha,a_i)$ is either $0$ or $2$. Since $i(a_i,a_{i+1})=0$, we know that $g(a_i,a_{i+1})\neq 0$. We thus know that $g(\alpha,a_{2i})=0$ and $g(\alpha,a_{2i+1})=2$. By Lemma \ref{connectivity3}, we have that $C_0^\alpha(a_{2i+1})$ is path-connected. We can thus find a path in $C_0^a(a_{2i+1})$ between $a_{2i},a_{2i+2}$, which is also a path in  $C_0^a$. This concludes the proof.
\end{proof}

\subsection{Transitivity}

We now prove (2) of Theorem \ref{C_0}. 
\begin{proof}[Proof of Theorem \ref{C_0}, part (2)]
By Lemma \ref{2.3}, we know that if $c,d$ is an edge in $C_0^\alpha$, there exists $h\in \W(0)$ such that $h(c)=d$. Since we also know that $C_0^\alpha$ is path-connected by section 4.2, for any two vertices $c,d\in C_0^\alpha$, there is a path $c_0=c,...,c_n=d$ in $C_0^\alpha$. Then there exists $h_i\in \W(0)$ such that $h_i(c_{i-1})=c_i$. Then we know that
\[
h_nh_{n-1}...h_1(c_0)=c_n.
\]
This implies that the action of $\W(0)$ on $C_0^\alpha$ is transitive. Since $\Ch$ contains $\W(0)$, the action of $\Ch$ on $C_0^\alpha$ is also transitive.
\end{proof}
We can now finish proving the left equality of Theorem \ref{main1}.
\begin{proof}[The proof of the left equality of Theorem \ref{main1}]
Firstly, by Lemma \ref{2.2}, we know that $\W(0)\leq [\Ch,\M]$. We now prove the reverse inclusion. Since $\M$ is generated by Dehn twists along nonseparating curves, we know that $[\Ch,\M]$ is generated by $fT_cf^{-1}T_c^{-1}$ for $f\in \Ch$ and nonseparating curves $c$. Then the curve $f(c)$ is a vertex in $C_0^c$ (The definition of $C_0^c$ does not depend on the orientation of $c$). By Theorem \ref{C_0}, there exists $h\in \langle \langle B_0 \rangle\rangle$ such that $h(c)=f(c)$. We then have that 
\[
fT_cf^{-1}T_c^{-1}=T_{f(c)}T_c^{-1} = T_{h(c)}T_c^{-1}=hT_ch^{-1}T_c^{-1}\in \langle \langle B_0 \rangle\rangle
\]
This implies that $[\Ch,\M]\leq\W(0)$.
\end{proof}
We now finish the proof Theorem \ref{main2}.
\begin{proof}[Proof of Theorem \ref{main2}]
Consider the Casson--Morita's $d$-map $d:\Ch\to\Z$. It is enough to show $d(H_0)$ generates $\im(d)$ and that 
$\langle\langle H_0\rangle\rangle$ contains $\ker (d)$.

By Lemma \ref{imd}, we have that $d(H_0)=d(T_2)$. By Morita \cite[Theorem 5.2]{morita2}, we have that $d(T_2)=8$. On the other hand, Kosuge \cite[Proposition 20]{Chilling} showed that $\im(d)=8\Z$. Hence, $d(H_0)$ generates $\im(d)$. Moreover, by Theorem \ref{main1} and Proposition \ref{2.2}, we have that $\ker(d) = \langle\langle B_0\rangle\rangle\leq \langle\langle H_0\rangle\rangle$. Hence, $H_0$ normally generates $\Ch$.
\end{proof}




\bibliographystyle{plain}
\bibliography{BPsep}
\end{document}